% Dec 18, 2022, Cat^L instead of Cat^\cK
% Sept 16, 2022 Corrections of Q_{X,M,B}
% Day5: corrections
% Dec 29, 2020 erasing tens
% Apr 24, 2020 Day3, once more rewriting Section 3
%  Mar 25, 2020 Day2, rewriting the relative tensor product
% part
%  Dec 6, 2019, Day convolution
%  Nov 29, after April, 11 2019
% Correcting $\RL$ and aiming to describe enriched Day 
% convolution for Yoneda embedding.
%          
%%%%%%%%%%%%%%%%%%%%%%%%%%%%%%%%%%%%%%%%%%%%%%%%%%%%%%%%%%%%%%%%%%%
\documentclass[12pt]{amsart}%{article}
\usepackage{amsmath,amstext,amsfonts,
amsthm,amssymb,hyperref,amscd,wasysym,stmaryrd,bbm}
%%\ProvidesPackage{euscript}[1995/01/06 v2.2
\usepackage{eucal}
\usepackage[all]{xy}               
\usepackage[usenames]{color}                          
\addtolength{\oddsidemargin}{-10mm}
\addtolength{\textwidth}{18mm}
%%%%%%%%%%%%%%%%%%%%%%%%%%%%%%%%%%%%%%%%%%%%%%%%%%%%%%%%%%%%%
%                                                           %
%         Theorems and alike                                %
%         THM, LEM, PRP, COR have level of subsection       %
%         Thm, Lem, Prp, Cor are not numbered               %
%                                                           %
%%%%%%%%%%%%%%%%%%%%%%%%%%%%%%%%%%%%%%%%%%%%%%%%%%%%%%%%%%%%%
%\fbox{
%\parbox{\textwidth}{ 
%}}
\swapnumbers
\newtheorem{thm}[subsubsection]{Theorem}
\newtheorem{lem}[subsubsection]{Lemma}
\newtheorem{prp}[subsubsection]{Proposition}
\newtheorem{crl}[subsubsection]{Corollary}

\newtheorem*{Lem}{Lemma}
\newtheorem*{Prp}{Proposition}

{  \theoremstyle{definition}
           \newtheorem{dfn}[subsubsection]{Definition}
           
           \newtheorem{rem}[subsubsection]{Remark}

           \newtheorem*{Dfn}{Definition}
           
           \newtheorem*{Rem}{Remark}
           
}

\newcommand{\act}{\mathrm{act}}

\newcommand{\Alg}{\mathtt{Alg}}

\newcommand{\Ass}{\mathtt{Ass}}
\newcommand{\BAR}{\mathtt{Bar}}

\newcommand{\BM}{\mathtt{BM}}
\newcommand{\BMod}{\mathtt{BMod}}
\newcommand{\BMOD}{\mathtt{BMOD}}% the 2-category of 
%                                   algebras
\newcommand{\Cat}{\mathtt{Cat}}
\newcommand{\CAT}{\mathtt{CAT}}

\newcommand{\colax}{\mathrm{colax}}

\newcommand{\colim}{\operatorname{colim}}
\newcommand{\Com}{\mathtt{Com}}
\newcommand{\cond}{\mathit{cond}}
\newcommand{\const}{\mathrm{const}}
\newcommand{\Coop}{\mathtt{Coop}}
\newcommand{\cart}{\mathrm{cart}}
\newcommand{\coc}{\mathrm{coc}}

\newcommand{\End}{\operatorname{End}}
\newcommand{\eq}{\mathit{eq}}

\newcommand{\Fam}{\mathtt{Fam}}
\newcommand{\Fin}{\mathit{Fin}}
\newcommand{\Fun}{\operatorname{Fun}}
\newcommand{\Funop}{\operatorname{Funop}}

 % straight Hom

 % calligraphic Hom for complexes
 % hom in the derived category
 % calligraphic Hom for ssets
\newcommand{\id}{\mathrm{id}}
\newcommand{\LMod}{\mathtt{LMod}}
\newcommand{\RMod}{\mathtt{RMod}}
\newcommand{\LM}{\mathtt{LM}}
\newcommand{\lax}{\mathrm{lax}}

\newcommand{\Lur}{\mathrm{Lur}}

\newcommand{\Mon}{\mathtt{Mon}}

\newcommand{\Map}{\operatorname{Map}}
\newcommand{\one}{\mathbbm{1}}

\newcommand{\ol}{\overline}

\newcommand{\op}{\mathrm{op}}
\newcommand{\Op}{\mathtt{Op}}

\newcommand{\PCat}{\mathtt{PCat}}

\newcommand{\Quiv}{\mathtt{Quiv}}
\newcommand{\quiv}{\mathtt{quiv}}
\newcommand{\red}{\mathit{red}}
\newcommand{\rlarrows}{\substack{\longrightarrow\\ \longleftarrow}}

\newcommand{\RM}{\mathtt{RM}}
\newcommand{\RT}{\mathtt{RT}}
\newcommand{\rev}{\mathit{rev}}

\newcommand{\Sh}{\mathtt{Sh}}

\newcommand{\Tw}{\mathtt{Tw}}
\newcommand{\TENS}{\mathtt{Tens}} % operad
\newcommand{\TEN}{\mathtt{Ten}} % suboperad
\newcommand{\EN}{\mathtt{En}} % subsuboperad
\newcommand{\ten}{\mathtt{ten}} %

\newcommand{\wt}{\widetilde}

\newcommand{\cA}{\mathcal{A}}
\newcommand{\cB}{\mathcal{B}}
\newcommand{\cC}{\mathcal{C}}
\newcommand{\cD}{\mathcal{D}}
\newcommand{\cE}{\mathcal{E}}
\newcommand{\cF}{\mathcal{F}}

\newcommand{\cK}{\mathcal{K}}

\newcommand{\cM}{\mathcal{M}}

\newcommand{\cO}{\mathcal{O}}
\newcommand{\cP}{\mathcal{P}}
\newcommand{\cQ}{\mathcal{Q}}
\newcommand{\cR}{\mathcal{R}}
\newcommand{\cS}{\mathcal{S}}
\newcommand{\cT}{\mathcal{T}}

\newcommand{\cX}{\mathcal{X}}

\newcommand{\fA}{\mathfrak{A}}
\newcommand{\fB}{\mathfrak{B}}
\newcommand{\fP}{\mathfrak{P}}
\newcommand{\fX}{\mathfrak{X}}

\begin{document}

\title[]{Colimits in enriched $\infty$-categories and Day convolution}
\author{Vladimir Hinich}
\address{Department of Mathematics, University of Haifa,
Mount Carmel, Haifa 3498838,  Israel}
\email{hinich@math.haifa.ac.il}

\begin{abstract}
Let $\cM$ be a monoidal $\infty$-category with colimits.
In this paper we study colimits of $\cM$-functors
$\cA\to\cB$ where $\cB$ is left-tensored over $\cM$ and $\cA$ is an $\cM$-enriched category in the sense of \cite{H.EY}.
We prove that the enriched Yoneda embedding 
$Y:\cA\to P_\cM(\cA)$ defined in {\sl loc. cit.}
yields a universal $\cM$-functor. In case when $\cA$ has 
a certain monoidal structure, the category of enriched presheaves $P_\cM(\cA)$ inherits the same monoidal structure and the enriched
Yoneda embedding acquires the structure of universal monoidal 
$\cM$-functor.
\end{abstract}
\maketitle

\section{Introduction}

In this paper we use the word {\sl category} to denote an 
$\infty$-category and the word {\sl operad} to denote an $\infty$-operad 
in the sense of Lurie~\cite{L.HA}, Section 2. On the contrary, if we want 
to stress that a certain $\infty$-category is a category in the classical 
sense, we call it a conventional category.
\subsection{}

Throughout
the paper we assume that $\cM$ is a monoidal category with colimits, such that the tensor product in $\cM$ preserves
colimits in both arguments. This means that $\cM\in\Alg_\Ass(\Cat^L)$,
where $\Cat^L$ denotes the category of categories with colimits, the 
arrows being the colimit-preserving functors.
We denote by $\LMod_\cM$ the category
of left $\cM$-modules in $\Cat^L$. In~\cite{H.EY} we constructed
a Yoneda embedding   $Y:\cA\to P_\cM(\cA)$ of an $\cM$-enriched category
$\cA$ into the category of enriched presheaves. In this paper
we prove that $Y$ is universal among the functors to a left $\cM$-module
$\cB$ with colimits: one has a natural equivalence
\begin{equation}
\label{eq:Y*}
Y^*:\Fun_{\LMod_\cM}(P_\cM(\cA),\cB)\to\Fun_\cM(\cA,\cB).
\end{equation}

\subsection{}
The existence of the functor (\ref{eq:Y*}) results from 
the functoriality of the assignement $\cB\mapsto\Fun_\cM(\cA,\cB)$. 
There is a functor in the opposite direction
that can be described in two ways: as an operadic left Kan extension, or using the notion of  weighted colimit.
\footnote{One can think of colimits
for functors of two kinds: functors from one $\cM$-enriched 
category to another, and functors from an $\cM$-enriched 
category to a category left tensored over $\cM$.  In this 
paper we deal with this second kind of functors.} 

Given an $\cM$-functor $f:\cA\to\cB$, where $\cB$ is a 
category with colimits left-tensored over $\cM$,
the weighted colimit $\colim(f): P_\cM(\cA)\to\cB$ is 
defined. This gives a functor
\begin{equation}
\colim:\Fun_\cM(\cA,\cB)\to\Fun_{\LMod_\cM}(P_\cM(\cA),\cB)
\end{equation}
quasi-inverse to $Y^*$.

\subsection{}
In the case when $\cM$ is an $\cO$-algebra in the category 
of monoidal categories (that is, $\cM$ is a 
$\cO\otimes\Ass$-monoidal category), one can define 
$\cO$-monoidal enriched $\cM$-categories, as well as 
$\cO$-monoidal left-tensored categories over $\cM$. 
In this case, if $\cA$ is an $\cO$-monoidal $\cM$-enriched 
category, $P_\cM(\cA)$  inherits an $\cO$-monoidal 
structure, Yoneda embedding becomes an 
$\cO$-monoidal $\cM$-functor (see   
\ref{sss:lm-version}), and (\ref{eq:Y*}) induces an equivalence
\begin{equation}
\label{eq:equivalence-O}
\Fun^{\cO}_{\LMod_\cM}(P_\cM(\cA),\cB)\to\Fun^\cO_\cM(\cA,\cB)
\end{equation}
of the corresponding categories of $\cO$-monoidal functors.

The $\cO$-monoidal structure on $P_\cM(\cA)$ is an
enriched version of the Day convolution defining a monoidal
structure on the presheaves on a monoidal category.

\subsection{}
The paper was started with the aim to prove universality of
enriched Yoneda embedding constructed in \cite{H.EY}. At first
the task seemed very easy: given an $\cM$-enriched category $\cA$
and an $\cM$-functor $f:\cA\to\cB$, where $\cB$ is a 
left $\cM$-module with colimits, one can define
the functor $\colim f:P_\cM(\cA)\to\cB$ as a weighted colimit, using 
 Lurie's general machinery \cite{L.HA} of relative tensor product 
(this is now explained in the beginning of Section~\ref{sec:wc}). 
One can easily prove that 
any map $F:P_\cM(\cA)\to\cB$ in $\LMod_\cM$ is equivalent to
the colimit of its composition with the Yoneda embedding.
But we have not found an easy argument to show that for any $f\in\Fun_\cM(\cA,\cB)$
the composition of $\colim f$ with the 
Yoneda embedding gives back $f$.

This is why we had to add Section~\ref{sec:lurie} 
comparing our working definition of enriched categories with the one 
given by Lurie in~\cite{L.HA}, 4.2.1.28. Now universality of $Y$
follows from the description of colimit preserving left $\cM$-module
maps $P_\cM(\cA)\to\cB$ as operadic left Kan extensions of their
restriction to $\bar\cA\subset P_\cM(\cA)$, the essential image of the 
Yoneda embedding. 

An $\cO$-monoidal version of the universality easily follows
from the interpretation of this operadic Kan extension in terms of 
weighted colimits.

Our work has a very considerable overlap with
the recent manuscript by Hadrian Heine~\cite{HH}. In it the category
of enriched $\cM$-categories is proven to be equivalent to 
the one defined by Lurie (we only prove that the functor 
$\cA\mapsto\bar\cA$ is fully faithful). Heine also proves 
universality of the Yoneda embedding. It seems, however, that his 
methods are insufficient to deduce the $\cO$-monoidal version of the 
universality.

\subsection{}
In Section~\ref{sec:digest} we provide a digest of the theory of enriched 
categories and enriched Yoneda lemma. The notion of 
enriched category used here is the one presented
in \cite{H.EY}, Sect. 3. Our definition of enriched
categories is practically equivalent to the earlier
definition of \cite{GH}. 

J.~Lurie defines in \cite{L.HA}, 4.2.1.28 another notion
of $\cM$-enriched category, as a category weakly tensored
over $\cM$, and satisfying some properties.

In Section~\ref{sec:lurie} we compare the notion of enriched categories
used in this paper with the one defined by Lurie. We 
construct a fully faithful functor from the category
$\Cat(\cM)$ of categories enriched over a monoidal category 
$\cM$ with colimits, to the category
of Lurie enriched $\cM$-categories~\footnote{H.~Heine has recently proven that these two notion of enrichment are equivalent, see~\cite{HH}.}.
What is more important to us, we interpret $\cM$-functors
$f:\cA\to\cB$ from an enriched category $\cA$ to a left-
tensored category $\cB$ as functors between the categories 
weakly tensored over $\cM$.

In Section~\ref{sec:rtp} we review the theory of relative 
tensor products \cite{L.HA}, 3.1 and 4.6.

In Section~\ref{sec:bar} we study bar resolutions
for enriched presheaves. This is a technical section whose
result is only needed in the characterization 
\ref{prp:olke} of morphisms
of $\cM$-modules $P_\cM(\cA)\to\cB$ as operadic left Kan extensions.

The notion of relative tensor product allows us to define
in Section~\ref{sec:wc} the weighted colimits. 
In Section~\ref{sec:functorialityQ} we study the functoriality of
the construction of Section~\ref{sec:wc}. This allows one to deduce
the multiplicative version of the main universality result in 
Section~\ref{sec:wc-mult}.

\subsection{Acknowledgement}
We are grateful to Greg Arone, Ilan Barnea and Tomer Schlank for their interest in this work and for sharing their
preprint~\cite{ABS}. We are also grateful to H. Heine for informing 
us about his work~\cite{HH}. The first version of the manuscript 
contained a number of mistakes and unproven claims, and we are
extremely indebted to the referee who helped to sort them out.

The work was supported by ISF 786/19 grant.

\section{Enriched categories and enriched Yoneda: digest}
\label{sec:digest}
In this section we recall some important constructions
of \cite{H.EY}. The notion of operadic left Kan extension
is reviewed in \ref{ss:OLKE}.

\subsection{}
The category of operads $\Op$ is a subcategory of
$\Cat_{/Fin_*}$, where $\Fin_*$ is the category of finite pointed sets. If $\cO$ is an operad, we denote
$\Op_{/\cO}$ or just $\Op_\cO$ the category of $\cO$-operads, that is operads endowed with a morphism to $\cO$. The terminal object in $\Op$
is the operad for commutative algebras. We denote it $\Com$ or $\Fin_*$.

The operad $\Ass^\otimes$ governs associative algebras, and 
$\Op_\Ass$ is the category of planar operads. 
We denote by $\LM^\otimes,\BM^\otimes$ the operads governing the left 
modules and the bimodules, respectively. Thus,  the operad $\BM^\otimes$
has three colors, so that the $\BM^\otimes$-algebras are the triples 
$(A,M,B)$ consisting of two associative algebras $A$ and $B$ acting from 
the left and from the right on $M$.
Similarly, a $\BM^\otimes$-operad has three components, two planar 
operads $A$ and $B$,  and a category $M$, with two compatible weak 
actions of $A$ and of $B$ on $M$.

Following~\cite{L.HA}, 2.3.3 and \cite{H.EY}, 2.7.1, we often
replace  operads with their strong approximations. In particular,
we use the approximation $\Ass$, $\LM$ and $\BM$ of $\Ass^\otimes$,
$\LM^\otimes$ and $\BM^\otimes$ as defined in \cite{H.EY}, 2.9.

\subsection{Quivers}
The notion of enriched category, as presented in \cite{H.EY}, is based on a functor
\begin{equation}\label{eq:quiv}
\Quiv:\Cat^\op\times\Op_\Ass\to\Op_\Ass
\end{equation}
 carrying a pair 
$(X,\cM)$ to a planar operad $\Quiv_X(\cM)$ whose colors
are {\sl $\cM$-quivers}, that is functors $A:X^\op\times X\to\cM$. 
 
The functor (\ref{eq:quiv}) has two variations.
The first is a functor
\begin{equation}\label{eq:quiv-lm}
\Quiv^\LM:\Cat^\op\times\Op_\LM\to\Op_\LM,
\end{equation}
and the second is
\begin{equation}\label{eq:quiv-bm}
\Quiv^\BM:\Cat^\op\times\Op_\BM\to\Op_\BM.
\end{equation}
The functors are compatible: the $\Ass$-component of
$\Quiv^\LM_X(\cM,\cB)$ is $\Quiv_X(\cM)$, and so on.

In good cases, the functors $\Quiv$ applied to monoidal categories with enough colimits, produce a monoidal category.

\subsubsection{More details}
\label{sss:detailsonlmx}

In Section~\ref{sec:bar} we will need a more detailed information about the functor (\ref{eq:quiv-lm})
\footnote{The functors (\ref{eq:quiv}), (\ref{eq:quiv-bm})
have a similar description.}.

In what follows $\Delta_{/\LM}$ denotes the category of simplices
in $\LM$.
For a fixed $X\in\Cat$, one defines an $\LM$-operad $\LM_X$
by a presheaf 
$$ (\Delta_{/\LM})^\op\to\cS$$
given by the formula
$$\LM_X(\sigma)=\Map(\cF(\sigma),X),$$
where $\cF:\Delta_{/\LM}\to\Cat$ is a functor with values in conventional 
categories combinatorially defined in \cite{H.EY}, 3.2.

The $\LM$-operad $\LM_X$ is always flat, \cite{H.EY}, 3.3. This means that the functor $\Op_\LM\to\Op_\LM$ given by product with $\LM_X$,
admits a right adjoint, which is denoted $\Funop_\LM(\LM_X,\_)$.

Finally, given $\cM=(\cM_a,\cM_m)\in\Op_\LM$, one defines
 the $\LM$-operad $\Quiv^\LM_X(\cM)$ as $\Funop_\LM(\LM_X,\cM)$.
 
Two other variations of the category of quivers, $\Quiv$ and 
$\Quiv^\BM$, have a similar description.

Given $\cM=(\cM_a,\cM_m,\cM_b)\in\Op_\BM$, the $\BM$-operad
$\Quiv^\BM_X(\cM)$ has components $(\Quiv_X(\cM_a),\Fun(X,\cM_m),\cM_b)$.

\subsubsection{$\Cat$-enrichment}
\label{sss:cat-enrichment-quiv}

Let $\cO$ be an operad or a strong approximation of an operad.
The category of $\cO$-operads $\Op_\cO$ has a $\Cat$-enrichment
that assigns to $\cP,\cQ\in\Op_\cO$ the category $\Alg_{\cP/\cO}(\cQ)$.

The functor $\Quiv^\LM_X:\Op_\LM\to\Op_\LM$ respects this enrichment.
This means that, given $\cP,\cQ\in\Op_\LM$, one has a functor
\begin{equation}
\label{eq:cat-enrichment-quiv}
\Alg_{\cP/\LM}(\cQ)\to\Alg_{\Quiv^\LM_X(\cP)/\LM}(\Quiv_X^\LM(\cQ))
\end{equation}
extending the map
$$
\Map_{\Op_\LM}(\cP,\cQ)\to\Map_{\Op_\LM}(\Quiv^\LM_X(\cP),\Quiv^\LM_X(\cQ)).
$$
The map (\ref{eq:cat-enrichment-quiv}) is defined as follows. Its target
is naturally equivalent, according to \cite{H.EY}, 2.8.6, to 
$\Alg_{\Quiv^\LM_X(\cP)\times_\LM\LM_X}(\cQ)$. The 
map~(\ref{eq:cat-enrichment-quiv}) can therefore be defined as the one
induced by the canonical evaluation map 
$$
\Quiv_X^\LM(\cP)\times_\LM\LM_X=\Funop_\LM(\LM_X,\cP)\times_\LM\LM_X\to\cP.
$$
 
\subsection{Algebras in quivers}

Fixing the second (operadic) argument, we will look at the functors 
{\ref{eq:quiv})--(\ref{eq:quiv-bm})  as cartesian families of (planar, $\LM$ or $\BM$) operads. Let us describe our interpretation for the categories of algebras
in various operads of quivers.

\subsubsection{Enriched precategories}

For a fixed planar operad $\cM$ with colimits, an associative algebra
in the family $\Quiv(\cM)$ is called $\cM$-enriched precategory. 
We denote $\PCat(\cM)=\Alg_\Ass(\Quiv(\cM))$ the category
of $\cM$-enriched precategories.

An enriched precategory $\cA$ has a category $X$ of objects, and an associative multiplication in the underlying quiver $\cA:X^\op\times X\to\cM$.

\subsubsection{$\cM$-functors}
\label{sss:Mfunctors}

Fix an $\LM$-operad, consisting of a planar operad $\cM$ 
and a category $\cB$ weakly tensored over $\cM$. For fixed 
$X\in\Cat$, the $\LM$-operad $\Quiv_X^\LM(\cM,\cB)$ 
consists of the planar operad $\Quiv_X(\cM)$ and a category 
$\Fun(X,\cB)$, weakly tensored over $\Quiv_X(\cM)$.

The $\LM$-operads $\Quiv_X^\LM(\cM,\cB)$ form a family
$\Quiv^\LM(\cM,\cB)$. 

An $\LM$-algebra in it consists of
a pair $(\cA,f)$ where $\cA$ is an $\cM$-enriched precategory, and $f$ is an $\cA$-module in $\Fun(X,\cB)$.

We denote 
$\PCat^\LM(\cM,\cB)=\Alg_\LM(\Quiv^\LM(\cM,\cB)).$

We interpret $\cA$-modules in $\Fun(X,\cB)$ as $\cM$-functors from $\cA$ to $\cB$, whence the notation
\begin{equation}
\label{eq:funM}
\Fun_\cM(\cA,\cB)=\LMod_\cA(\Fun(X,\cB)),
\end{equation}
the category of left $\cA$-modules in $\Fun(X,\cB)$.

\subsubsection{} Assume now $\cM$ is a monoidal category
with colimits. Applying the above to $\cB:=\cM$ considered 
as a right $\cM$-module (which is the same as left  
$\cM^\rev$-module), we can define the category of enriched 
presheaves $P_\cM(\cA)=$ \newline $\Fun_{\cM^\rev}(\cA^\op,\cM)$. It is 
left-tensored over $\cM$ and has colimits.

Yoneda embedding is an $\cM$-functor $Y:\cA\to P_\cM(\cA)$,
defined by $\cA$-bimodule structure on $\cA$, see details
in \cite{H.EY}, Section 6.

\subsubsection{Enriched categories} 
\label{sss:enrichedcategory}
An $\cM$-enriched 
category is an enriched precategory satisfying a certain
completeness condition. The full embedding $\Cat(\cM)\subset\PCat(\cM)$ is right adjoint to a localization
functor $L:\PCat(\cM)\to\Cat(\cM)$ 
which can be described as follows. Given 
$\cA\in\Alg_\Ass(\Quiv(\cM))$, we define 
$X$ as the subspace of $P_\cM(\cA)^\eq$ spanned by the representable functors,
and define $L(\cA)$ as the endomorphism object
in $\Quiv_X(\cM)$ of the tautological embedding
$X\to P_\cM(\cA)$, see \cite{H.EY}, 7.2.

\subsubsection{Restriction of scalars}
\label{sss:restriction}

Given a cartesian family $p:\cQ\to B\times\LM$ of 
$\LM$-operads, the embedding $\Ass\to\LM$ induces a functor
$$ \Alg_\LM(\cQ)\to\Alg_\Ass(\cQ)$$
which is a cartesian fibration. This result can be found in \cite{L.HA}, 4.2.3.2 or \cite{H.EY}, 2.11.

\subsubsection{}
\label{sss:inverseimage}

Let $(\cM,\cB)$ be an $\LM$-operad.
The assignment $\cA\mapsto\Fun_\cM(\cA,\cB)$ is contravariant in $\cA$. This is a special case of
a general setup presented in \ref{sss:restriction}.

Thus,  a map $f:\cA\to\cA'$ of $\cM$-enriched precategories 
gives rise to a functor 
$f^*:\Fun_\cM(\cA',\cB)\to\Fun_\cM(\cA,\cB)$.
The definition of $f^*$ allows one to compose a map 
of $\cM$-enriched precategories $f:\cA\to\cA'$ with an 
$\cM$-functor $\cA'\to\cB$.

\subsection{Operadic left Kan extensions}
\label{ss:OLKE}

Operadic colimits and operadic left Kan extensions 
defined in Lurie's \cite{L.HA}, Section 3.1, are a part of  the
construction of a free operad algebra. In this subsection we present 
necessary details connected to these notions.

Let $\cO$ be an operad and let $\cC\in\Alg_\cO(\Cat^L)$ be an
$\cO$-monoidal category with colimits. Given a morphism $f:\cP\to\cQ$
of $\cO$-operads, one has a forgetful functor 
$f^*:\Alg_\cQ(\cC)\to\Alg_\cP(\cC)$. In this context the operadic left Kan extension always exists and defines a functor
$$
f_!:\Alg_\cP(\cC)\to\Alg_\cQ(\cC)
$$
left adjoint to $f^*$.

\subsubsection{Operadic colimit, see~\cite{L.HA}, 3.1.1}
For $K\in\Cat$ we denote $K^\triangleright=(K\times[1])\sqcup^K[0]$,
the category obtained by adding to $K$ the terminal object $*$.

Let $p:\cC\to\cO$ be an $\cO$-operad. For any functor $f:K\to\cC^\act$
to the subcategory of $\cC$ spanned by the active arrows we denote
\begin{equation}
\label{eq:act-to}
\cC^\act_{f/}=\{f\}\times_{\Fun(K,\cC^\act)}\Fun(K^\triangleright,\cC^\act)\times_{\cC^\act}\cC_{\langle 1\rangle},
\end{equation}
where the arrows from $\Fun(K^\triangleright,\cC^\act)$ are defined by the embeddings $K\to K^\triangleright$ and $*\in K^\triangleright$.

A functor $\bar f:K^\triangleright\to\cC^\act$ with $f:=\bar f|_K$
is called a weak operadic colimit diagram
if the natural map
$$
\cC^\act_{\bar f/}\to\cC^\act_{f/}\times_{\cO^\act_{q\circ f/}}
\cO^\act_{p\circ\bar f/}
$$
is an equivalence.

One says that $\bar f$ is an operadic colimit diagram
if for any $C\in\cC$ the composition
$$
K^\triangleright\stackrel{\bar f}{\to}\cC^\act
\stackrel{\oplus C}{\to}\cC^\act
$$
is a weak operadic colimit diagram.

Let $f:K\to\cC^\act$ be a diagram in an $\cO$-operad $\cC$ and let
$\bar g:K^\triangleright\to\cO$ be an extension of
$g=p\circ f$. We say that the diagram $f$ has an operadic colimit over
$\bar g$ if there exists $\bar f$ over $\bar g$ that is an operadic colimit diagram.

In the case when $\cC$ is an $\cO$-monoidal category with colimits, such that the monoidal structure is compatible with the colimits, any
diagram $f:K\to\cC^\act$ has an operadic colimit.

\subsubsection{Operadic left Kan extensions}

We present below a definition of operadic left Kan extension
due to Lurie, \cite{L.HA}, 3.1.2. We restrict ourselves to a special 
case of what Lurie calls ``free algebra'', see \cite{L.HA}, 3.1.3.1.

Given $f:\cP\to\cQ$  a morphism of $\cO$-operads and an $\cO$-operad
$\cC$, one has an obvious functor
$$
f^*:\Alg_\cQ(\cC)\to\Alg_\cP(\cC).
$$

For any $q\in \cQ$ we define $K_q=\cP\times_\cQ\cQ^\act_{/q}$. 

Given $A\in\Alg_\cP(\cC)$ and $B\in\Alg_{\cQ}(\cC)$,   a morphism
$ j:A\to f^*(B)$ in $\Alg_\cP(\cC)$  determines a morphism of functors 
$\alpha_q\to\const_{B(q)}:K_q\to\cC^\act$,
with $\alpha_q$ being the composition of $A$ with the projection $K_q\to\cP$ and $\const_{B(q)}$ being the constant functor with the value $B(q)$. Equivalently, this translates into a functor  
\begin{equation}
\label{eq:alphaq}
\bar\alpha_q:K_q^\triangleright\to\cC^\act.
\end{equation}

A morphism $j:A\to f^*(B)$ 
is called an operadic left Kan extension of $A$ with respect to $f$
if for any $q\in\cQ$ the functor
$
\bar\alpha_q:K_q^\triangleright\to\cC^\act
$ 
is an operadic colimit diagram.

In the case when $\cC\in\Alg_\cO(\Cat^L)$, the operadic left Kan extension exists and defines a functor $f_!:\Alg_\cP(\cC)\to\Alg_\cQ(\cC)$
left adjoint to $f^*$.

\section{Lurie's enriched categories}
\label{sec:lurie}

In this section we compare the notion of
$\cM$-enriched category and of an $\cM$-enriched functor,
as presented in~\ref{sss:enrichedcategory} and
\ref{sss:Mfunctors},
with the similar (but simpler) notions of Lurie, \cite{L.HA}, 4.2.1. 

\subsection{$\LM$-operads and Lurie's enriched categories}

Probably, the simplest way to define an enriched $\infty$-category
over a monoidal category $\cM$ is presented in Lurie's
\cite{L.HA}, 4.2.1.28. 

\subsubsection{}
The map $a:\Ass\to\LM$ of operads induces a base change
functor
\begin{equation}
\label{eq:lmtoass}
 a^*:\Op_{\LM}\to\Op_\Ass
\end{equation}
assigning to each $\LM$-operad $\cO$ its planar component
$\cO_a\to\Ass$. The fiber $\cO_m$ at $m\in\LM$ is a category that is called {\sl weakly enriched} over $\cO_a$.

We denote by $\LMod^w_\cM$ 
the fiber of $a^*$ at $\cM\in\Op_\Ass$. This is the 
category of categories weakly enriched over $\cM$.

An object of $\LMod^w_\cM$ is an $\LM$-operad $\cO$
together with an equivalence $\cM=\Ass\times_\LM\cO$.
We will sometimes denote it as a pair
$(\cM,\cA)$, where $\cA=\{m\}\times_{\LM}\cO$,
or (when $\cM$ is fixed) as  $\cA$.

\subsubsection{}
\label{sss:funlmodw}

For $\cO,\cO'\in\LMod^w_\cM$ we define
$\Fun_{\LMod^w_\cM}(\cO,\cO')$ as the fiber of the map
$$\Alg_{\cO/\LM}(\cO')\to\Alg_{\cM/\Ass}(\cM)$$
at $\id_\cM$.

\

Let now  $\cM$ be a monoidal category.
Here is the definition of Lurie's $\cM$-enriched category.

\begin{dfn}
Let $\cM$ be a monoidal category. A Lurie $\cM$-enriched category $\cA$
is an $\LM$-operad $\cO$ with the equivalences
$\cM=\Ass\times_\LM\cO$, $\cA=\{m\}\times_{\LM}\cO$,
satisfying the following properties.
\begin{itemize}
\item[1.] The natural map $\oplus m_i\to \otimes m_i$
induces an equivalence
$\Map((\otimes m_i)\oplus a,b)
\to\Map(\oplus m_i\oplus a,b)$ for 
any $m_i\in \cM$ and $a,b\in\cA$. Here we use the sign
$\oplus$ as in \cite{L.HA}, 2.1.1.15
~\footnote{The first property is a {\sl pseudo-enrichment} in Lurie's terminology. see~\cite{L.HA}, 4.2.1.25. The condition makes sense
for the number of factors $n\geq 0$.}.
\item[2.] For any $a,b\in\cA$ the weak enrichment functor
$\hom_\cA(a,b):\cM^\op\to\cS$,
defined by the formula
$$\hom_\cA(a,b)(m)=\Map(m\oplus a,b),$$
 is representable.
\end{itemize}
\end{dfn}
Lurie $\cM$-enriched categories form a category denoted 
$\Cat^\Lur(\cM)$. This is a full subcategory of
$\LMod^w_\cM$ spanned by the Lurie $\cM$-enriched categories.

In this section we assign to any
 $\cM$-enriched category $\cA$ a Lurie $\cM$-enriched category 
$\bar\cA$. We prove  that the category
$\Cat(\cM)$ defined in~\ref{sss:enrichedcategory} 
is equivalent to a full subcategory 
of $\Cat^\Lur(\cM)$, see~\ref{crl:ff}.  
Note that H.~Heine has recently proven ~\cite{HH} that the two notions
are equivalent.

\subsubsection{}
Let $\cM$ be a monoidal category with colimits.

For any $\cM$-enriched category $\cA$ we define $\bar\cA\subset P_\cM(\cA)$, the full subcategory of $P_\cM(\cA)$ 
spanned by the representable presheaves. Obviously, 
$\bar\cA$ is an $\cM$-enriched category in the sense of
Lurie. By~\cite{H.EY}, 6.1.4, this defines a functor 
$$\lambda:\Cat(\cM)\to\Cat^\Lur(\cM).$$

Corollary~\ref{crl:ff} asserts that this functor is fully 
faithful. As H. Heine shows in \cite{HH}, the functor $\lambda$ is 
actually an equivalence, see Remark~\ref{rem:lambdaequivalence}.

\subsection{Baby Yoneda functor}
\label{ss:babyY}

In what follows we denote by $(\cM,\bar\cA)^\otimes$ 
the $\LM$-operad formed by the category $\bar\cA$ weakly tensored over a monoidal category $\cM$.

Let $\cM$ be a monoidal category with colimits, 
$\cA$ be an $\cM$-enriched category and $\cB$ be a 
category with colimits left-tensored over $\cM$.

Let $(\cM,\bar\cA)$ be the corresponding Lurie enriched category.

Our aim is to construct an equivalence 
\begin{equation}
\label{eq:funfun}
\Fun_{\LMod_\cM^w}(\bar\cA,\cB)\to\Fun_\cM(\cA,\cB).
\end{equation}
 
\subsubsection{}
Functoriality of the assignment
$$\cB\mapsto\Fun_\cM(\cA,\cB),$$
as defined by the formula (\ref{eq:funM}),
and the preservation of $\Cat$-enrichment by $\Quiv^\LM$, see
\ref{sss:cat-enrichment-quiv},
yields a canonical functor
\begin{equation}
\label{eq:funalgtofun0}
\Fun_\cM(\cA,\cB')\times
\Fun_{\LMod_\cM^w}(\cB',\cB)\to\Fun_\cM(\cA,\cB).
\end{equation}
In particular, for $\cB':=\bar\cA$, we get

\begin{equation}
\label{eq:funalgtofun}
\Fun_\cM(\cA,\bar\cA)\times
\Fun_{\LMod_\cM^w}(\bar\cA,\cB)\to\Fun_\cM(\cA,\cB).
\end{equation}

We claim that the  Yoneda embedding 
$Y:\cA\to P_\cM(\cA)$ factors uniquely 
through the full embedding 
$\bar  Y:\bar\cA\to P_\cM(\cA)$;  
the natural $\cM$-functor $y:\cA\to\bar\cA$ so defined
will be denoted $y$ and called {\sl the baby Yoneda functor}.
The existence (and uniqueness) of the baby Yoneda 
immediately follows from the lemma below.
\begin{lem}
Let $\cB$ be a full subcategory in $\cC$ that is weakly enriched over a monoidal category $\cM$. Then, for an associative
algebra $\cA$ in $\cM$, one has
$$\LMod_\cA(\cB)=\LMod_\cA(\cC)\times_\cC\cB.$$
\end{lem}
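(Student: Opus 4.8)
The plan is to realise both sides as fibres of categories of $\LM$-algebras and to reduce everything to the evident fully faithfulness of the $\LM$-operad attached to the full subcategory $\cB$. Write $\cC^\otimes\to\LM^\otimes$ for the $\LM$-operad exhibiting $\cC$ as weakly enriched over $\cM$, so that $\cM=\Ass\times_\LM\cC^\otimes$ and $\cC$ is the fibre over the module colour $m$. Let $\cB^\otimes\subset\cC^\otimes$ be the full suboperad spanned by those sequences of colours whose module-coloured entries lie in $\cB$, the algebra-coloured entries being arbitrary objects of $\cM$. Since $\cB$ is full in $\cC$, any morphism of $\cC^\otimes$ between two such sequences already belongs to $\cB^\otimes$, so $\cB^\otimes$ is a \emph{full} subcategory of $\cC^\otimes$; inert lifts evidently restrict to it, so $\cB^\otimes$ is the $\LM$-operad exhibiting the given weak enrichment of $\cB$ over $\cM$. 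In particular the inclusion $j\colon\cB^\otimes\to\cC^\otimes$ is a fully faithful map of $\LM$-operads which is the identity on the $\Ass$-part $\cM$.

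Next I would use that $\Alg_\LM(\cD^\otimes)$ is the full subcategory of $\Fun_{/\LM^\otimes}(\LM^\otimes,\cD^\otimes)$ spanned by the inert-preserving functors, together with the underlying-object functor $\ev\colon\Alg_\LM(\cD^\otimes)\to\cD$ given by evaluation at $m$. Because $j$ is fully faithful, so is $\Fun(\LM^\otimes,\cB^\otimes)\to\Fun(\LM^\otimes,\cC^\otimes)$, and restricting to the conditions ``over $\LM^\otimes$'' and ``inert-preserving'' (which $j$ respects) shows that $\Alg_\LM(\cB^\otimes)\to\Alg_\LM(\cC^\otimes)$ is fully faithful. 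To compute its essential image, observe that an inert-preserving $F\colon\LM^\otimes\to\cC^\otimes$ sends each object to a sequence whose algebra-coloured entries are copies of the underlying object of $F|_\Ass$ (hence lie in $\cM$) and whose module-coloured entries are copies of $\ev(F)$; thus $F$ takes values in the full subcategory $\cB^\otimes$ on objects, and so factors through $j$, precisely when $\ev(F)\in\cB$. Since $\cB\subset\cC$ is fully faithful, this identifies $\Alg_\LM(\cB^\otimes)$ with the full subcategory of $\Alg_\LM(\cC^\otimes)$ on algebras with underlying object in $\cB$, that is, with $\Alg_\LM(\cC^\otimes)\times_\cC\cB$.

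Finally I would pass to fibres over $\cA$. Both $\Alg_\LM(\cB^\otimes)$ and $\Alg_\LM(\cC^\otimes)$ map to $\Alg_\Ass(\cM)$ by restriction to the $\Ass$-part, compatibly with $j$ (which is the identity there), and $\LMod_\cA$ is by definition the fibre of this map over $\cA$. Taking the fibre over $\cA$ of the equivalence of the previous paragraph, and noting that the projection to $\Alg_\Ass(\cM)$ factors through the $\Alg_\LM(\cC^\otimes)$-coordinate of the fibre product, rearranges the iterated pullback into $\bigl(\Alg_\LM(\cC^\otimes)\times_{\Alg_\Ass(\cM)}\{\cA\}\bigr)\times_\cC\cB=\LMod_\cA(\cC)\times_\cC\cB$, while the left-hand fibre is $\LMod_\cA(\cB)$. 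This yields the asserted identity.

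The only non-formal input is the first paragraph: one must confirm that $\cB^\otimes$ really is a full subcategory of $\cC^\otimes$ which is again an $\LM$-operad realising the weak enrichment of $\cB$, the point being that fullness of $\cB\subset\cC$ forces the multimorphism spaces of $\cB^\otimes$ and $\cC^\otimes$ to agree on the relevant colours. Once this is in place, the transfer of fully faithfulness through $\Alg_\LM(-)$ and through the fibre over $\cA$ is formal; the mild bookkeeping to watch is that $j$ is the identity on the $\Ass$-part, so that the fibres over $\cA$ on the two sides correspond and the fibre products may be rearranged as above.
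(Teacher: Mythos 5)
Your proof is correct. The paper gives no argument for this lemma (it is stated with an immediate \qed as evident), and your write-up is exactly the standard unpacking the author presumably has in mind: the full $\LM$-suboperad $\cB^\otimes\subset\cC^\otimes$ spanned by sequences with module-coloured entries in $\cB$, fully faithfulness of $\Alg_\LM(\cB^\otimes)\to\Alg_\LM(\cC^\otimes)$ with essential image detected by evaluation at $m$, and passage to fibres over $\cA\in\Alg_\Ass(\cM)$.
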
\qed

Note the following obvious property of the $\cM$-functor
$y:\cA\to\bar\cA$.
\begin{lem}\label{lem:forget}

The forgetful functor $\Fun_\cM(\cA,\bar\cA)\to\Fun(X,\bar\cA)$ carries $y$ to a map $i:X\to\bar\cA$ identifying
$X$ with the maximal subspace of $\bar\cA$.
\end{lem}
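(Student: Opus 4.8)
The plan is to unwind the definitions of the two forgetful functors and show they identify the same object. Recall that $y \in \Fun_\cM(\cA,\bar\cA) = \LMod_\cA(\Fun(X,\bar\cA))$ is obtained as the unique factorization of the Yoneda embedding $Y : \cA \to P_\cM(\cA)$ through the full embedding $\bar\cA \hookrightarrow P_\cM(\cA)$, using the preceding lemma which gives $\LMod_\cA(\bar\cA) = \LMod_\cA(P_\cM(\cA)) \times_{P_\cM(\cA)} \bar\cA$. The forgetful functor $\Fun_\cM(\cA,\bar\cA) \to \Fun(X,\bar\cA)$ is the composition of the restriction of scalars along $\Ass \to \LM$ (see~\ref{sss:restriction}) with the underlying-object functor; concretely it sends a left $\cA$-module in $\Fun(X,\bar\cA)$ to its underlying object of $\Fun(X,\bar\cA)$, i.e.\ a functor $X \to \bar\cA$. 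So the first step is to compute the image $i : X \to \bar\cA$ of $y$ under this functor.

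First I would trace through the construction of $Y$ from \cite{H.EY}, Section 6: the underlying functor $X \to P_\cM(\cA)$ of the Yoneda embedding sends an object $x \in X$ to the representable presheaf $h_x = \cA(-,x)$. Since $\bar\cA$ is by definition the full subcategory of $P_\cM(\cA)$ spanned by the representable presheaves, the map $i : X \to \bar\cA$ is precisely $x \mapsto h_x$, landing in $\bar\cA$ by construction. The essential content is then that this map $i$ identifies $X$ with the maximal subspace (the underlying $\infty$-groupoid) $\bar\cA^\eq$ of $\bar\cA$.

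For the identification with the maximal subspace, I would argue in two parts. Essential surjectivity is immediate: every object of $\bar\cA$ is by definition a representable presheaf $h_x$ for some $x \in X$, so $i$ is surjective on objects, hence surjective onto $\bar\cA^\eq$. For the equivalence on spaces, I would invoke the completeness condition built into the definition of an $\cM$-enriched category in~\ref{sss:enrichedcategory}: the localization $L : \PCat(\cM) \to \Cat(\cM)$ defines $X$ precisely as the subspace of $P_\cM(\cA)^\eq$ spanned by the representable functors. Since $\cA$ is already an enriched category (not merely a precategory), it is a fixed point of this localization, so its space of objects $X$ agrees with the subspace of $\bar\cA^\eq \subset P_\cM(\cA)^\eq$ spanned by the representables; but that subspace is all of $\bar\cA^\eq$ because every object of $\bar\cA$ is representable. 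Thus $i : X \to \bar\cA^\eq$ is an equivalence.

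The main obstacle I anticipate is verifying that the map $i$ produced by the forgetful functor genuinely coincides, as a map of spaces, with the tautological embedding $X \to P_\cM(\cA)$ of~\ref{sss:enrichedcategory} rather than merely agreeing on objects. This requires care because $y$ is defined only up to the factorization through a pullback, and one must check that the left-module structure does not perturb the underlying object map. I would handle this by noting that the forgetful functor to $\Fun(X,\bar\cA)$ factors the forgetful functor to $\Fun(X, P_\cM(\cA))$ through the full embedding, so the underlying map of $y$ is forced to equal the underlying map of $Y$, namely $x \mapsto h_x$; the completeness of $\cA$ then supplies the identification of $X$ with $\bar\cA^\eq$ as above, and no further compatibility needs to be checked since both are $\infty$-groupoids and $i$ is a map between them that is essentially surjective and, by the definition of $X$ via $L$, fully faithful.
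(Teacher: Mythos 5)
Your proof is correct and fills in exactly the argument the paper treats as obvious (the lemma is stated with no proof, only \qed): the underlying map of $y$ is forced to be that of $Y$, namely $x\mapsto h_x$, and the completeness condition of~\ref{sss:enrichedcategory} identifies $X$ with the subspace of $P_\cM(\cA)^\eq$ spanned by the representables, which is precisely $\bar\cA^\eq$. Nothing to add.
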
\qed

\subsection{The functor (\ref{eq:funfun}) is an equivalence}

In Proposition~\ref{prp:y-equivalence} we prove that the functor 
(\ref{eq:funfun}) is an equivalence. The idea is to present the source and the target of the map by a monad on $\Fun(X,\cB)$ and to verify the equivalence of the monads.

\subsubsection{}
Recall that $(\cM,\cB)$ is an $\LM$-monoidal category with colimits. 
Let $\cA\in\Alg(\Quiv_X(\cM))$. Let
$\Phi:X\to\cB$ be an $\cA$-module in $\Fun(X,\cB)$ and let 
$a:\phi\to\Phi$ be an arrow in $\Fun(X,\cB)$. 
In Lemma~\ref{lem:Afree} below we formulate the condition for $a$ to represents $\Phi$ as a free $\cA$-module generated by $\phi$. 

The $\cA$-module structure on $\Phi$ defines
an active arrow $(\cA,\Phi)\to\Phi$ in 
$\Quiv_X^\LM(\cM,\cB)$
that is explicitly described in~\cite{H.EY}
4.2.1 and 4.3.1, case (w2) with $n=2$, $k=1$. 

Here is the description. The active arrow above is given by a
map $A:=[1]\times_\LM\LM_X\to(\cM,\cB)^\act$ where 
$$A=A_0\sqcup^C(C\times[1])\sqcup^CA_1,$$
with $A_0=X\times X^\op\times X$, $A_1=X$, $C=\Tw(X)^\op\times X$,
the map $C\to A_0$ is given by the projection $\Tw(X)^\op\to X\times X^\op$, whereas $C\to A_1$ is the projection to the last factor.

This yields, for any $x\in X$, a functor
\begin{equation}
\label{eq:Amodulepart}
\bar\theta^{\Phi}_x:(\Tw(X)^\op)^\triangleright\to(\cM,\cB)^\act
\end{equation}
carrying the terminal object $*\in(\Tw(X)^\op)^\triangleright$
to $\Phi(x)$ and the arrow $\alpha:z\to y$ from $\Tw(X)$ to 
the pair $(\cA(y,x),\Phi(z))$.

We denote by $\theta^\Phi_x$ the restriction  of
$\bar\theta^{\Phi}_x$ to $\Tw(X)^\op$. The functor 
$\theta^\phi_x:\Tw(X)^\op\to(\cM,\cB)^\act$ is define in the same way
and the map $a:\phi\to\Phi$
gives rise to a map of functors
$a:\theta^\phi_x\to\theta^\Phi_x$.

\subsubsection{}
\label{sss:obs1}
A functor $f:K^\triangleright\to\cC$
can be uniquely presented by a map $f|_K\to \const_{f(*)}$ in 
$\Fun(K,\cC)$, where $\const_{f(*)}$ is the constant functor with the 
value $f(*)\in\cC$.
In particular, given $f$ as above and an arrow $\alpha:f_1\to f|_K$
in $\Fun(K,\cC)$, we get a canonically defined functor $f':K^\triangleright\to\cC$ with $f'|_K=f_1$ and $f'(*)=f(*)$. 

This allows one to define 
\begin{equation}
\label{eq:thetaax}
\bar\theta^a_x:
(\Tw(X)^\op)^\triangleright\to(\cM,\cB)^\act
\end{equation}
as the functor induced by $a:\phi\to\Phi$  whose
restriction to $\Tw(X)^\op$ is $\theta^\phi_x$
and the value at the terminal object is $\Phi(x)$.

\subsubsection{Cocartesian shift}
\label{sss:obs2}
Let $p:\cC\to B$ be a cocartesian fibration
and let, as above, $f:K^\triangleright\to\cC$ be a functor.

As above, $f$ gives rise to a map $f|_K\to \const_{f(*)}$ in
$\Fun(K,\cC)$, as well as to its image 
$p\circ f|_K\to\const_{p\circ f(*)}$ in $\Fun(K,B)$. Since the map 
$\Fun(K,p):\Fun(K,\cC)\to\Fun(K,B)$ is a cocartesian fibration, we get a 
map of functors
$$
f\to\Sh(f):K^\triangleright\to\cC,
$$
such that $p\circ\Sh(f)=\const_{p\circ f(*)}$ and
for each $x\in K$ the arrow $f(x)\to\Sh(f)(x)$ is $p$-cocartesian. 
In this case we will say that $\Sh(f)$ is obtained from $f$ by a 
{\sl cocartesian shift}.

\subsubsection{} Applying the cocartesian shift to (\ref{eq:thetaax}), we get
$$
\Sh(\bar\theta^a_x):(\Tw(X)^\op)^\triangleright\to\cB.
$$

One has the following.
\begin{lem}
\label{lem:Afree}
A map $a:\phi\to\Phi$ in $\Fun(X,\cB)$ presents $\Phi$
as a free $\cA$-module if and only if for any $x\in X$
the diagram $\bar\theta^a_x$ is an operadic colimit diagram
(or, equivalently, if $\Sh(\bar\theta^a_x)$ is a colimit diagram).
\end{lem}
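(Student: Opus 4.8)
The plan is to recognize the assertion as a special case of Lurie's criterion for a morphism to be an operadic left Kan extension, recalled in~\ref{ss:OLKE}, and to spend the real work on matching the abstract index category appearing there with the twisted arrow category $\Tw(X)^\op$ that enters the definition of $\theta^{a\triangleright}_x$.

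First I would exhibit the free $\cA$-module functor as an operadic left Kan extension, in the sense of~\ref{ss:OLKE}, in the $\LM$-monoidal category $\cC=(\cM,\cB)$, which by hypothesis has colimits compatible with the tensor. Using $\Quiv^\LM_X(\cM,\cB)=\Funop_\LM(\LM_X,(\cM,\cB))$ and the discussion of~\ref{sss:detailsonlmx}, the $\cA$-modules in $\Fun(X,\cB)$ are the $\LM_X$-algebras in $(\cM,\cB)$ whose $\Ass$-part is $\cA$, and the forgetful functor $\Fun_\cM(\cA,\cB)=\LMod_\cA(\Fun(X,\cB))\to\Fun(X,\cB)$ is the restriction $i^*$ along the morphism $i\colon\cP\to\cQ$ of $\LM$-operads that, relative to the fixed algebra $\cA$, adjoins to the module color the single left-action operation, i.e.\ the $(n=2,k=1)$ operation of case (w2). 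By~\ref{ss:OLKE}, $i^*$ admits a left adjoint computed as an operadic left Kan extension, and a map $a\colon\phi\to i^*(\Phi)$ in $\Fun(X,\cB)$ exhibits $\Phi$ as the free $\cA$-module generated by $\phi$ exactly when $a$ is an operadic left Kan extension in the sense of~\ref{ss:OLKE}.

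Next I would unwind the criterion of~\ref{ss:OLKE}. It says that $a$ is an operadic left Kan extension if and only if, for every unary color $q$ of $\cQ$ lying over the module color, the functor $\alpha^\triangleright_q\colon K_q^\triangleright\to(\cM,\cB)^\act$ is an operadic colimit diagram, where $K_q=\cP\times_\cQ\cQ^\act_{/q}$. The module-colored unary colors of $\LM_X$ are exactly the objects $x\in X$, so the criterion becomes the family of conditions indexed by $x\in X$ in the statement.

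The heart of the matter, and the step I expect to be the main obstacle, is the identification, for $q=x$, of the comma category $K_q$ with $\Tw(X)^\op$ and of the functor $\alpha^\triangleright_q$ with $\theta^{a\triangleright}_x$. For this I would feed in the combinatorial model of $\LM_X$ through the functor $\cF$ of~\ref{sss:detailsonlmx} together with the explicit form of the action arrow $(\cA,\Phi)\to\Phi$ recorded above, namely the decomposition $A=A_0\sqcup^C(C\times[1])\sqcup^C A_1$ with $A_0=X\times X^\op\times X$, $A_1=X$ and $C=\Tw(X)^\op\times X$. Fixing the output object to be $x$ cuts out of these data precisely the category $\Tw(X)^\op$, whose object $\alpha\colon z\to y$ records the pair $(\cA(y,x),\Phi(z))$, while the terminal vertex of $K_q^\triangleright$ receives the value $\Phi(x)$ produced from $a$ as in~\ref{sss:obs1}; this is exactly $\theta^{a\triangleright}_x$. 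The delicate points are to check that the fiber product defining $K_q$ genuinely collapses to $\Tw(X)^\op$ once $\cA$ and $x$ are fixed — this is where the single-action, $k=1$ shape of the operation is essential, since longer strings of composable arrows do not contribute — and to verify that the operadic-colimit structure transported across this identification agrees on the nose with the one demanded in the statement. Once this comparison is in place, the lemma is immediate from~\ref{ss:OLKE}.
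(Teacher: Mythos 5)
Your overall strategy differs from the paper's. The paper does not invoke the operadic left Kan extension criterion of \ref{ss:OLKE} at all here: it observes that $a$ exhibits $\Phi$ as free on $\phi$ precisely when the induced active arrow $(\cA,\phi)\to\Phi$ in $\Quiv^\LM_X(\cM,\cB)$ is cocartesian over the single active arrow $(a,m)\to m$ in $\LM$, and then unwinds what a cocartesian arrow in $\Funop_\LM(\LM_X,(\cM,\cB))$ over that one arrow means, using the decomposition $A=A_0\sqcup^C(C\times[1])\sqcup^C A_1$ with $C=\Tw(X)^\op\times X$ recorded just before the lemma. Because only the binary operation $(a,m)\to m$ is involved, the indexing category $\Tw(X)^\op$ appears on the nose and no cofinality question arises.

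This is where your proposal has a genuine gap. If you set up the problem as an operadic left Kan extension along $i:\cP\to\cQ=\LM_X$ (with $\cP$ the suboperad lacking the action operations), then for $q=x$ the category $K_x=\cP\times_\cQ\cQ^\act_{/x}$ contains objects lying over active arrows $a^nm\to m$ for \emph{every} $n\geq 0$, not just $n=1$: an object is a string $((y_n,x_n),\dots,(y_1,x_1),z)$ together with composability data and an active arrow to $x$. Your claim that ``the fiber product defining $K_q$ genuinely collapses to $\Tw(X)^\op$'' and that ``longer strings of composable arrows do not contribute'' is false as stated: they contribute objects to $K_x$, and what is true is only that a suitable functor $\Tw(X)^\op\to K_x$ is cofinal. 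That cofinality is a real statement needing a real argument (compare the paper's Lemma \ref{lem:cofinal}, where the analogous claim for $\cF_x$ is proved via Quillen's Theorem A by exhibiting terminal objects in the comma categories); you identify it as the delicate point but then assert rather than prove it, and the assertion you make is the wrong one. There is also a secondary imprecision: since $\cA$ is held fixed, the adjoint you want is not the absolute free-algebra functor $f_!$ recalled in \ref{ss:OLKE} but a relative version over $\id_\cA$, and the definition of $\cP$ and of the relevant index category has to be adjusted accordingly before the criterion can even be quoted. Either supply the cofinality argument (and the correct relative setup), or switch to the paper's route via cocartesian arrows over $(a,m)\to m$, which sidesteps the issue entirely.
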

\begin{proof}
The map $a:\phi\to\Phi$ presents $\Phi$ as a free $\cA$-module generated by $\phi$ if $a$ induces a cocartesian
arrow $(\cA,\phi)\to\Phi$ in $\Quiv_X^\LM(\cM,\cB)$.
This easily translates to our condition.
\end{proof}

The evaluation of (\ref{eq:funalgtofun}) at $y$
defines the canonical functor
$$y^*:\Fun_{\LMod_\cM^w}(\bar\cA,\cB)\to\Fun_\cM(\cA,\cB).
$$

Lemma~\ref{lem:forget} asserts that $G\circ y^*=G'$
where $G:\Fun_\cM(\cA,\cB)\to\Fun(X,\cB)$ is the forgetful 
functor and $G':\Fun_{\LMod_\cM^w}(\bar\cA,\cB)\to
\Fun(X,\cB)$ is given by the composition with $X\to\bar\cA$.

\subsubsection{} We will now prove that $y^*$  is an equivalence. 
Our proof will use the description of the source and the target of $y^*$ 
by monads on $\Fun(X,\cB)$.

Let us consider the following diagram.

 \begin{equation}
\xymatrix{
&{\Fun_{\LMod_\cM^w}(\bar\cA,\cB)} \ar^{y^*}[rr]
\ar^{G'}[rd]&{} &{ \Fun_\cM(\cA,\cB)  }\ar^{G}@<1ex>[ld] \\
&{} &{\Fun(X,\cB)}\ar^{F}@{.>}[ru]\ar^{F'}@{.>}@<1ex>[lu] &{}
}
\end{equation}
Here the functor $F$, left adjoint to $G$, is the free 
$\cA$-module functor.
The functor $F'$, left adjoint to $G'$, is defined by the 
operadic left Kan extension with respect to the map of 
$\LM$-operads
$$\epsilon:\cM\sqcup X\to(\cM,\bar\cA).$$

The equivalence $G'=G\circ y^*$ gives rise to a morphism
of functors $\eta:F\to y^*\circ F'$.  
Here is the main result of this section.
\begin{prp}
\label{prp:y-equivalence}
The functor $y^*$ defined above is an equivalence.
\end{prp}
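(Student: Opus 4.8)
The plan is to prove that $y^*$ is an equivalence by exploiting the monadic description set up in the diagram, rather than by analyzing $y^*$ directly. Both the source $\Fun_{\LMod^w_\cM}(\bar\cA,\cB)$ and the target $\Fun_\cM(\cA,\cB)$ map down to $\Fun(X,\cB)$ via the forgetful functors $G'$ and $G$, and both of these functors are monadic (or at least admit left adjoints $F'$ and $F$, being described respectively by an operadic left Kan extension along $\epsilon:\cM\sqcup X\to(\cM,\bar\cA)$ and by the free $\cA$-module functor). Since $G'=G\circ y^*$ and both sit over the common base $\Fun(X,\cB)$, the strategy is to show that $y^*$ is a map of monadic adjunctions inducing an equivalence on the associated monads; by the Barr--Beck--Lurie theorem, a morphism of monadic adjunctions over a common base that induces an equivalence of monads is itself an equivalence.

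Concretely, I would first verify that $G$ is monadic: $\Fun_\cM(\cA,\cB)=\LMod_\cA(\Fun(X,\cB))$ by~(\ref{eq:funM}), and the forgetful functor from left modules over an algebra is monadic by the standard Barr--Beck argument, with left adjoint the free module functor $F$. Next I would establish that $G'$ is likewise monadic with left adjoint $F'$ given by the operadic left Kan extension along $\epsilon$; this uses the existence result for operadic left Kan extensions recalled in~\ref{ss:OLKE}, valid because $\cB$ has colimits compatible with the $\cM$-action. The morphism $\eta:F\to y^*\circ F'$ coming from $G'=G\circ y^*$ then induces a comparison of the two monads $T=G\circ F$ and $T'=G'\circ F'$ on $\Fun(X,\cB)$, and the whole problem reduces to showing this comparison $T\to T'$ is an equivalence of monads.

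The crucial identification is that both monads compute the same thing on $\Fun(X,\cB)$, namely the free $\cA$-module. The free $\cA$-module $T(\phi)=F(\phi)$ is characterized, by Lemma~\ref{lem:Afree} and its colimit reformulation Corollary~\ref{crl:Afree}, by the condition that the associated diagram $\overrightarrow\theta^\phi_x$ has colimit $F(\phi)(x)$ for every $x\in X$. On the other side, $T'(\phi)=G'(F'(\phi))$ is the restriction to $X$ of the operadic left Kan extension along $\epsilon$, and by the explicit description of operadic left Kan extensions in~\ref{ss:OLKE}, its value at $x$ is computed as an operadic colimit over the comma category $K_x=(\cM\sqcup X)\times_{(\cM,\bar\cA)}(\cM,\bar\cA)^\act_{/x}$. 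The heart of the argument is thus to match these two colimit descriptions: to show that the indexing diagram for the operadic left Kan extension restricts (cofinally) to precisely the diagram $\overrightarrow\theta^\phi_x$ over $\Tw(X)^\op$ whose colimit computes the free $\cA$-module. This rests on Lemma~\ref{lem:forget}, which identifies $X$ with the maximal subspace of $\bar\cA$, so that the relevant mapping spaces $\Map(m\oplus x, \bar a)$ in $\bar\cA$ recover the enriched hom-objects $\cA(z,x)$ of the original enriched category via representability.

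The main obstacle I anticipate is exactly this comparison of diagram shapes: verifying that the comma-category diagram governing the operadic left Kan extension $F'$ agrees, cofinally and compatibly with the $\cM$-action, with the twisted-arrow diagram $\Tw(X)^\op$ governing the bar-type colimit that computes the free $\cA$-module. This requires unwinding the combinatorial description of $\LM_X$ and the active arrows in $(\cM,\bar\cA)^\otimes$ recalled in~\ref{sss:detailsonlmx}, and is precisely the sort of identification for which the bar resolution machinery of Section~\ref{sec:bar} is built; I would expect to invoke those results to produce a cofinal comparison between the two colimit diagrams. Once the monads are identified, concluding that $y^*$ is an equivalence is formal from Barr--Beck--Lurie, since $y^*$ commutes with the forgetful functors to the common base and carries the comparison map $\eta$ of left adjoints to the identification of monads.
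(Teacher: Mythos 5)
Your proposal follows essentially the same route as the paper: both reduce the statement to Lurie's monadicity comparison (HA 4.7.3.16) via the forgetful functors $G$, $G'$ to $\Fun(X,\cB)$, and both identify the key step as showing that the comma-category diagram computing the operadic left Kan extension along $\epsilon$ admits a cofinal map from the twisted-arrow diagram $\Tw(X)^\op$ governing the free $\cA$-module. The only small divergence is that you expect to invoke the bar-resolution machinery of Section~\ref{sec:bar} for this cofinality, whereas the paper handles it directly by constructing $\tau_x:\Tw(X)^\op\to\cF_x$ and applying Quillen's Theorem A (the relevant comma categories have terminal objects $\tau_x(\id_z)$); the bar machinery is reserved for the later Proposition~\ref{prp:olke}.
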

\begin{proof}
According to~\cite{L.HA}, 4.7.3.16, we have to verify the
following conditions.
\begin{itemize}
\item[1.] The functors $G'$ and $G$ preserve geometric realizations. 
\item[2.] The functors $G$ and $G'$ are conservative.
\item[3.] $\eta(\phi)$ is an equivalence for any 
$\phi\in\Fun(X,\cB)$.
\end{itemize}

The functor $G$ is conservative by \cite{L.HA}, 3.2.2.6 and 
preserves colimits by \cite{L.HA}, 4.2.3.5.
The functor $G'$  is conservative (by \cite{L.HA}, 3.2.2.6) 
and preserves geometric realizations by~\cite{L.HA}, 
3.2.3.1.

It remains to verify that 
$\eta(\phi):F(\phi)\to y^*\circ F'(\phi)$ is an equivalence 
for any $\phi:X\to\cB$. We will do so by verifying that the 
unit of the adjunction
\begin{equation}
\label{eq:a}
a:\phi\to G'\circ F'(\phi)
\end{equation}
satisfies the condition of~\ref{lem:Afree} with
$\Phi=G'\circ F'(\phi)$.

The map (\ref{eq:a}) defines, for any 
$x\in X$, $\Phi(x)$ as an operadic colimit, see~\cite{L.HA}, 3.1.1.20, 3.1.3.5, which we are now going to describe.  

To shorten the notation, we denote $\fX=\cM\sqcup X$
and $\fA=(\cM,\bar\cA)$, both considered as $\LM$-operads.
Similarly, $\fP$ will denote the $\LM$-operad 
$(\cM,P_\cM(\cA))$.
 We denote
$\cF_x=\fX\times_\fA\fA^\act_{/x}$ and define
the functor
$$\bar\phi_x:\cF_x\to(\cM,\cB)$$
as the composition of the projection $\cF_x\to\fX$ and the
map $\fX\to(\cM,\cB)$ induced by $\phi$. 

The map (\ref{eq:a}) defines $\Phi(x)$ as the operadic colimit of $\bar\phi_x$.

In~\ref{sss:taux} and ~\ref{lem:composition} 
below we define a functor 
$\tau_x:\Tw(X)^\op\to\cF_x$
and prove that   
$\theta^{\phi}_x:\Tw(X)^\op\to(\cM,\cB)$ 
factors as $\theta^\phi_x=\bar\phi_x\circ \tau_x$.

Then we prove (see~\ref{lem:cofinal}) that $\tau_x$ is cofinal.
This implies that the diagram $\bar\theta_x^a$
is an operadic colimit  diagram. This, by
\ref{lem:Afree},  proves the assertion.
\end{proof}

\subsubsection{Construction of $\tau_x$, a general idea}

Here is how $\tau_x$ looks like. An object $f\in\cF_x$ is given by a collection
$(m_1,\ldots,m_n,z,\beta)$ where $m_i\in \cM$, $z\in X$
and $\beta:(m_1,\ldots,m_n,z)\to x$ is an arrow
in $\cQ$ over an active arrow $(a^nm)\to m$ in $\LM$.
Note that an arrow $\beta$ can be equivalently described
(by the Yoneda lemma, see~\cite{H.EY}, Sect.~6) by an arrow $\otimes m_i\to\cA(z,x)$ in $\cM$.

The functor $\tau_x:\Tw(X)^\op\to\cF_x$ will carry
an arrow $\alpha:z\to y$
to
$$\tau_x(\alpha)=(\cA(y,x),z,\alpha^*:\cA(y,x)\to\cA(z,x)).$$

We present below a more accurate description of $\tau_x$.
 
\subsubsection{Construction of $\tau_x$}
\label{sss:taux}

The functor $\tau_x$ is defined by its components
$\tau^\fX:\Tw(X)^\op\to\fX$ and
$\tau^\fA:\Tw(X)^\op\to\fA^\act_{/x}$
and an equivalence of their compositions
$\Tw(X)^\op\to\fA$. The functor $\tau^\fX$ is the composition
\begin{equation}
\label{eq:taup}
\Tw(X)^\op\to X\times X^\op\to\cM\times X,
\end{equation}
where the second map carries $(z,y)$ to $(\cA(y,x),z)$.

Since $\bar\cA$ is a full subcategory of $P:=P_\cM(\cA)$,
$\fA^\act_{/x}$ is a full subcategory of $\fP^\act_{/Y(x)}$. The functor $\tau^\fA$ is therefore uniquely defined
by a functor $\Tw(X)^\op\to\fP^\act_{/Y(x)}$
whose composition with the forgetful functor to $\fP$
is given by~(\ref{eq:taup}).

Since $\fP$ is $\LM$-monoidal,  $\tau^\fA$ is 
determined by a functor 
$\Tw(X)^\op\to P_{/Y(x)}$
assigning to $\alpha:z\to y$ an arrow
$\cA(y,x)\otimes Y(z)\to Y(x)$ in $P$.

The right fibration $p:\Tw(X)^\op\to X^\op\times X$ carrying $\alpha:z\to y$ to the pair $(y,z)$, is classified by the functor $h:X\times X^\op\to\cS$
given by the formula $h(z,y)=\Map_X(z,y)$.

One has a functor $X^\op\times X\to P$ carrying $(y,z)$
to $\cA(y,x)\otimes Y(z)$. In order to lift it to a functor $\Tw(X)^\op\to P_{/Y(x)}$, it is enough to 
present a morphism of functors $\Map_X(z,y)\to\Map_\cM(\cA(y,x),\cA(z,x))$. This latter comes from functoriality of $\cA$.

We have the following.

\begin{lem}
\label{lem:composition}
$\theta^\phi_x=\bar\phi_x\circ \tau_x$.
\end{lem}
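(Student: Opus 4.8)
The plan is to verify the claimed equality $\theta^\phi_x = \bar\phi_x \circ \tau_x$ by unwinding both sides as functors $\Tw(X)^\op \to (\cM,\cB)^\act$ and checking that they agree on objects and arrows in a compatible way. The key point is that both functors are built from the same combinatorial data---namely, the pairing of hom-objects $\cA(y,x)$ with the values $\phi(z)$ along arrows $\alpha : z \to y$ in $\Tw(X)$---and so the identification should amount to tracking definitions carefully through the construction of $\tau_x$ in \ref{sss:taux}.

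First I would spell out the right-hand side. An arrow $\alpha : z \to y$ in $\Tw(X)$ is sent by $\tau_x$ into $\cF_x = \fX \times_\fA \fA^\act_{/x}$, with $\fX$-component $(\cA(y,x), z)$ coming from \eqref{eq:taup}, and with the structure arrow $\beta : (\cA(y,x), z) \to x$ in $\fA$ corresponding under Yoneda to $\alpha^* : \cA(y,x) \to \cA(z,x)$. Composing with the projection $\cF_x \to \fX$ and then the map $\fX \to (\cM,\cB)$ induced by $\phi$, the functor $\bar\phi_x$ lands $\alpha$ on the pair $(\cA(y,x), \phi(z))$ in $(\cM,\cB)^\act$. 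On the other hand, $\theta^\phi_x$ by its description in~\eqref{eq:Amodulepart} and the surrounding discussion carries $\alpha : z\to y$ precisely to $(\cA(y,x), \Phi(z))$ with $\Phi = \phi$, i.e.\ to $(\cA(y,x), \phi(z))$. So the two functors agree on objects, and I would then check that they assign matching active arrows, which again reduces to the fact that the $\cM$-action map $\cA(y,x) \otimes \phi(z) \to \phi(\cdots)$ used in $\theta^\phi_x$ is induced by the very same functoriality of $\cA$ that defines $\tau_x$.

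The cleanest way to organize this is to observe that $\theta^\phi_x$ is, by construction, already obtained from the universal active arrow $(\cA, \Phi) \to \Phi$ restricted along the simplicial data $A = [1]\times_\LM \LM_X$, whereas $\bar\phi_x \circ \tau_x$ is obtained by first embedding $\Tw(X)^\op$ into $\cF_x$ via $\tau_x$ and then applying $\bar\phi_x$. Both routes factor through the projection to $\fX$ and the morphism of $\LM$-operads encoding the weak action; since $\tau_x$ was constructed in \ref{sss:taux} precisely so that its composition with the forgetful functor $\cF_x \to \fX$ recovers~\eqref{eq:taup}, and since $\bar\phi_x$ is by definition the composite of this projection with the $\phi$-induced map, the equality becomes a formal consequence once the two $\fX$-level descriptions are matched.

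I expect the main obstacle to be bookkeeping rather than conceptual: the functor $\theta^\phi_x$ is defined via an explicit and somewhat intricate pushout presentation $A = A_0 \sqcup^C (C\times[1]) \sqcup^C A_1$ of \cite{H.EY}, 4.2.1 and 4.3.1, and one must confirm that the restriction of this data to the relevant fiber matches the Yoneda-theoretic description of arrows $\beta$ in $\cF_x$ used to build $\tau_x$. The delicate part is checking the coherence of the identification at the level of the active-arrow structure (not just on objects), where the equivalence between the combinatorial pairing in $A$ and the arrow $\otimes m_i \to \cA(z,x)$ supplied by Yoneda must be used compatibly. Once this compatibility is established, naturality in $\alpha$ follows since both sides derive their functoriality from the functoriality of $\cA$ in its two variables, so no additional argument is needed beyond tracing the same natural transformation through both constructions.
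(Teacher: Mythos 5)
Your argument is essentially the paper's own proof: the key observation in both is that $\bar\phi_x$ factors through the projection $\cF_x\to\fX$, so the composite $\bar\phi_x\circ\tau_x$ is determined by the $\fX$-component $\tau^\fX$ of (\ref{eq:taup}), which by construction reproduces the data $(\cA(y,x),\phi(z))$ defining $\theta^\phi_x$. Your additional bookkeeping on arrows and coherence is just a more explicit unwinding of the same one-line reduction.
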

\begin{proof}
The functor  
$\theta^{\phi}_x:\Tw(X)^\op\to(\cM,\cB)$ 
factors as $\theta^\phi_x=\bar\phi_x\circ \tau_x$
as $\bar\phi_x$ factors through $\fX$, so that the composition $\bar\phi_x\circ \tau_x$ can be expressed
through $\tau^\fX$ given by the formula (\ref{eq:taup}).

\end{proof}

\begin{lem}
\label{lem:cofinal}
The functor $\tau_x:\Tw(X)^\op\to\cF_x$ is cofinal.
\end{lem}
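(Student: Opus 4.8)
The plan is to check cofinality through the criterion of \cite{L.T}, 4.1.3.1: the functor $\tau_x$ is cofinal as soon as, for every object $f\in\cF_x$, the fibre product
$$\cC_f:=\Tw(X)^\op\times_{\cF_x}(\cF_x)_{f/}$$
is weakly contractible. So the first task is to make $\cC_f$ completely explicit. Writing $f=(m_1,\ldots,m_n,z,\beta)$ with $\beta:\otimes m_i\to\cA(z,x)$ as in the description of $\cF_x$ given above, and using that the module colours of $\fX=\cM\sqcup X$ carry no $\cM$-action (so that the module component of a morphism of $\fX$ is an honest arrow of $X$), I would record that an object of $\cC_f$ is a tuple $(\alpha\colon z'\to y,\ v\colon z\to z',\ u\colon\otimes m_i\to\cA(y,x))$ subject to $(\alpha v)^*u=\beta$, and that a morphism is a morphism $(\phi\colon z'\to z'',\ \psi\colon\bar y\to y)$ of $\Tw(X)^\op$ compatible with the data $v$ and $u$.

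Next I would introduce the auxiliary category $\cD_f$ with objects $(y,\ w\colon z\to y,\ u\colon\otimes m_i\to\cA(y,x))$ satisfying $w^*u=\beta$, with a morphism $(y,w,u)\to(\bar y,\bar w,\bar u)$ given by $\psi\colon\bar y\to y$ such that $w=\psi\bar w$ and $\bar u=\psi^*u$, together with the forgetful functor
$$\pi\colon\cC_f\to\cD_f,\qquad (\alpha,v,u)\mapsto(y,\alpha v,u).$$
The point of $\cD_f$ is that it has a terminal object, namely $(z,\id_z,\beta)$: for any $(y,w,u)$ the arrow $\psi=w$ is the unique morphism to it, the compatibility $w^*u=\beta$ being built into the objects. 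Hence $\cD_f$ is weakly contractible.

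Finally I would analyse $\pi$. Its fibre over $(y,w,u)$ is the category of factorizations $z\to z'\to y$ of the fixed arrow $w$; this category has an initial object ($z'=z$, $v=\id_z$) and a terminal object ($z'=y$, $\alpha=\id_y$), hence is weakly contractible. Moreover $\pi$ is a cartesian fibration, the cartesian lift of a morphism $\psi\colon\bar y\to y$ over a given object being obtained by postcomposing $\alpha$ with $\psi$. Since a cartesian fibration with weakly contractible fibres is a weak homotopy equivalence (e.g.\ \cite{L.T}, 4.1.2), we get $|\cC_f|\simeq|\cD_f|\simeq\ast$, which yields the required weak contractibility and hence cofinality of $\tau_x$.

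The main obstacle is the very first step. Everything downstream is formal once $\cC_f$ is identified, but pinning down the morphisms of $\cF_x=\fX\times_\fA\fA^\act_{/x}$ requires care: one must use the enriched Yoneda lemma to turn an active arrow $(m_1,\ldots,m_n,z)\to x$ in $\fA$ into a map $\otimes m_i\to\cA(z,x)$ \emph{functorially}, and one must check that the coproduct structure of $\fX=\cM\sqcup X$ really does force the module leg of each morphism to be a plain arrow of $X$, the $\cM$-factors only multiplying among themselves. This is where the bookkeeping lives; the factorization-category fibration over $\cD_f$ is then straightforward.
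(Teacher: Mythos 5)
Your proof is correct, but it takes a genuinely different route from the paper's, and the extra structure you introduce turns out not to be dispensable. The paper settles the lemma in one line by asserting that the comma category $\cC_f=\Tw(X)^\op\times_{\cF_x}(\cF_x)_{f/}$ has a \emph{terminal} object, namely $\tau_x(\id_z)$ equipped with the evident arrow from $f$. With the description of $\cC_f$ that you correctly extract --- objects $(\alpha\colon z'\to y,\ v\colon z\to z',\ u)$ with $(\alpha v)^*u\simeq\beta$, morphisms absorbing pieces of $\alpha$ into $v$ and into $u$ --- a map from such an object to $\tau_x(\id_z)$ requires in particular a retraction of $v$, so the paper's candidate is not terminal once $X$ has a non-invertible arrow: already for $X=[1]=\{z\xrightarrow{v}w\}$, $\cM=\cS$, $\cA=X$, $x=w$, $f=(z,\beta=v)$, the comma category is the zig-zag $\tau_x(\id_z)\leftarrow\tau_x(v)\to\tau_x(\id_w)$, which is weakly contractible but has an initial rather than a terminal object. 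Your two-stage argument --- a cartesian fibration $\pi\colon\cC_f\to\cD_f$ with weakly contractible factorization-category fibres over a category $\cD_f$ that genuinely does have a terminal object --- is essentially the minimal correct repair, and it is structurally the same kind of argument the paper itself is forced to use for the harder cofinality claim inside the proof of Proposition~\ref{prp:olke} (the null-homotopy built from $p$-product spans $t\leftarrow t'\to t_\phi$); in the example above the fibrewise-initial object over $\pi(t)$ plays the role of $t'$. What remains is, as you say, bookkeeping: $\cD_f$ and $\pi$ must be produced as honest $\infty$-categorical constructions (for instance, $\cD_f$ as the left fibration over $(X_{z/})^\op$ classified by $w\mapsto\mathrm{fib}_\beta(w^*)$, with $\pi$ induced by composition $(\alpha,v)\mapsto\alpha v$), and the identification of the morphisms of $\cF_x$ --- in particular that the only active arrows of $\fX=\cM\sqcup X$ into the module colour are the unary arrows of $X$, which is what forces the module leg $v$ into the picture --- must be made precise; none of this threatens the argument.
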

\begin{proof}
We use Quillen's Theorem A, see~\cite{L.T}, 4.1.3.1.

We claim that the comma category
\begin{equation}
\label{eq:fibercategory}
\Tw(X)^\op\times_{\cF_x}(\cF_x)_{f/}
\end{equation}
has a terminal object for any $f\in\cF_x$. In fact, let
$f=(m_1,\ldots,m_n,z,u)$ where $u:\otimes m_i\to\cA(z,x)$
is an arrow in $\cM$. The terminal object of (\ref{eq:fibercategory}) is given by $\tau_x(\id_z)=
(\cA(z,x),z,\id_{\cA(z,x)})$.
\end{proof}

\subsection{Enriched categories and Lurie enriched categories}

As an easy consequence of the above, we have the following.

\begin{crl}
\label{crl:ff}
The functor $\lambda:\Cat(\cM)\to\Cat^\Lur(\cM)$
is fully faithful.
\end{crl}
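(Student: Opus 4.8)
The plan is to deduce full faithfulness of $\lambda$ directly from the equivalence $y^*$ established in Proposition~\ref{prp:y-equivalence}. Recall that for two $\cM$-enriched categories $\cA,\cA'$, a morphism $\cA\to\cA'$ in $\Cat(\cM)$ is, up to the completeness localization, an $\cM$-functor, that is an object of $\Fun_\cM(\cA,\cA')$; and similarly a morphism $\bar\cA\to\bar{\cA'}$ in $\Cat^\Lur(\cM)\subset\LMod^w_\cM$ lives in $\Fun_{\LMod^w_\cM}(\bar\cA,\bar{\cA'})$. The key point is that $\bar{\cA'}$ is itself a category left-tensored over $\cM$ (being a full subcategory of $P_\cM(\cA')$, which is left-tensored and has colimits), so we may apply the main result of the section with $\cB:=\bar{\cA'}$.

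First I would specialize the equivalence~(\ref{eq:funfun}), namely
\begin{equation*}
y^*:\Fun_{\LMod^w_\cM}(\bar\cA,\bar{\cA'})\xrightarrow{\ \sim\ }\Fun_\cM(\cA,\bar{\cA'}),
\end{equation*}
which holds by Proposition~\ref{prp:y-equivalence} since $\bar{\cA'}$ has colimits and is left-tensored over $\cM$. The left-hand side is exactly the mapping space (or rather mapping category, then passing to maximal subspaces) computing $\Map_{\Cat^\Lur(\cM)}(\bar\cA,\bar{\cA'})$, because $\Cat^\Lur(\cM)$ is a full subcategory of $\LMod^w_\cM$. The right-hand side is $\Fun_\cM(\cA,\bar{\cA'})$, and I would identify this, via the baby Yoneda functor $y:\cA'\to\bar{\cA'}$ and the composition map of~\ref{sss:inverseimage}, with $\Fun_\cM(\cA,\cA')$, the $\cM$-functors between the original enriched categories. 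Passing to the underlying groupoids (maximal subspaces), the equivalence $y^*$ then yields
\begin{equation*}
\Map_{\Cat^\Lur(\cM)}(\lambda\cA,\lambda\cA')\xrightarrow{\ \sim\ }\Map_{\Cat(\cM)}(\cA,\cA'),
\end{equation*}
which is precisely the full faithfulness of $\lambda$.

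The main obstacle I anticipate is the bookkeeping identifying $\Fun_\cM(\cA,\bar{\cA'})$ with $\Map_{\Cat(\cM)}(\cA,\cA')$, i.e.\ checking that an $\cM$-functor $\cA\to\bar{\cA'}$ factors through $y:\cA'\to\bar{\cA'}$ up to the completeness condition and so corresponds to a genuine morphism of enriched categories. Here I would use Lemma~\ref{lem:forget}, which identifies the space of objects $X'$ of $\cA'$ with the maximal subspace of $\bar{\cA'}$, together with the completeness (univalence-type) condition of~\ref{sss:enrichedcategory} that pins down the object spaces. The remaining verification—that the mapping-object structures match—is forced by the full faithfulness of $\bar{\cA'}\hookrightarrow P_\cM(\cA')$ and the enriched Yoneda lemma of~\cite{H.EY}. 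Since these are the standard identifications underlying the definition of $\Cat(\cM)$, the corollary follows formally once $y^*$ is in hand; this is why it is stated as an easy consequence.
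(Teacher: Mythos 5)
There is a genuine gap in the first step. You apply Proposition~\ref{prp:y-equivalence} with $\cB:=\bar{\cA'}$, justifying this by the claim that $\bar{\cA'}$ ``has colimits and is left-tensored over $\cM$'' because it is a full subcategory of $P_\cM(\cA')$. This premise is false: a full subcategory of a left-tensored category with colimits inherits neither structure. The subcategory $\bar{\cA'}$ of representable presheaves is only \emph{weakly} tensored over $\cM$ (that is the whole point of Lurie's definition --- the tensor $m\otimes Y(a)$ need not be representable), and it has no reason to admit colimits. Proposition~\ref{prp:y-equivalence} genuinely needs these hypotheses: its proof runs through the Barr--Beck theorem and compares the free $\cA$-module functor $F$ with the operadic left Kan extension $F'$, both of which are built from colimits in $\cB$ (cf.\ Corollary~\ref{crl:Afree}). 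So the displayed equivalence $y^*:\Fun_{\LMod^w_\cM}(\bar\cA,\bar{\cA'})\to\Fun_\cM(\cA,\bar{\cA'})$ is not an instance of the proposition as you invoke it.

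The repair is what the paper actually does: apply Proposition~\ref{prp:y-equivalence} with $\cB:=P_\cM(\cA')$, which \emph{does} have colimits and is genuinely left-tensored, obtaining an equivalence $\Fun_{\LMod^w_\cM}(\bar\cA,P_\cM(\cA'))\to\Fun_\cM(\cA,P_\cM(\cA'))$, and then observe that this equivalence restricts to the full subcategories of functors with representable essential image (this uses that $y:\cA\to\bar\cA$ hits every object of $\bar\cA$, Lemma~\ref{lem:forget}). On the target side, $\Map_{\Cat(\cM)}(\cA,\cA')$ is identified with the subspace of $\Fun_\cM(\cA,P_\cM(\cA'))^\eq$ of functors with representable images --- this is the ``bookkeeping'' you correctly anticipate and is the same as the paper's first step. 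Once you reroute through $P_\cM(\cA')$ in this way, the rest of your argument goes through and coincides with the paper's proof.
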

\begin{proof}
Let $\cA,\cA'\in\Cat(\cM)$. 
The map
$$
\Map_{\Cat(\cM)}(\cA,\cA')\to\Fun_\cM(\cA,P_\cM(\cA'))^\eq
$$
is embedding, identifying the left-hand side with the
subspace of the right-hand side consisting of
$f:\cA\to P_\cM(\cA')$ with representable images. In other words, it induces an equivalence
$$
\Map_{\Cat(\cM)}(\cA,\cA')\to\Fun_\cM(\cA,\bar\cA')^\eq.
$$
According to the theorem, one has an equivalence
$$
\Fun_\cM(\cA,P_\cM(\cA'))\to
\Fun_{\LMod^w_\cM}(\cA,P_\cM(\cA'))
$$
which identifies $\Map_{\Cat(\cM)}(\cA,\cA')$
with $\Map_{\LMod^w_\cM}(\bar\cA,\bar\cA')$.
\end{proof}

Note that H.~Heine has recently proven ~\cite{HH} that $\lambda$
is an equivalence.

\begin{rem}
\label{rem:lambdaequivalence}
Let $\cB$ be weakly enriched over a monoidal category $\cM$. Here is a reasonable way to assign to $\cB$ an $\cM$-enriched category $\cA$. Let $X=\cB^\eq$ and let
$i:X\to\cB$ be the natural embedding. The $\LM$-operad $\Quiv^\LM_X(\cM,\cB)$ defines a weak enrichment of $\Fun(X,\cB)$ over $\Quiv_X(\cM)$.

The $\cM$-enriched category $\cA$ can now be defined as the endomorphism
object of $i\in\Fun(X,\cB)$ (if it exists).  
Apparently, this is precisely how ~\cite{HH} proves that
$\lambda$ is an equivalence.
\end{rem}
\subsubsection{}
The forgetful functor
$p:\Alg_\LM(\Cat)\to\Alg_\Ass(\Cat)$ is a cartesian fibration.
In particular, given a monoidal functor $f:\cA\to\cB$ and a category 
$\cX$ left-tensored over $\cB$, we have an $\LM$-monoidal cartesian 
lifting $f^!:(\cA,\cX)\to(\cB,\cX)$ in $\Alg_\LM(\Cat)$. 

\begin{Lem}
\label{lem:cartesianarrow}
The arrow $f^!:(\cA,\cX)\to(\cB,\cX)$ is also $p'$-cartesian, where
$$ p':\Op_\LM\to\Op_\Ass$$
is the forgetful functor.
\end{Lem}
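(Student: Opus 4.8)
The plan is to verify that $f^!$ is $p'$-cartesian directly from the mapping-space criterion for cartesian arrows. Recall that $f^!$ is $p'$-cartesian precisely when, for every $\LM$-operad $T\in\Op_\LM$ with planar component $T_a:=p'(T)$, the square
\[
\xymatrix{
\Map_{\Op_\LM}(T,(\cA,\cX)) \ar[r]\ar[d] & \Map_{\Op_\LM}(T,(\cB,\cX))\ar[d]\\
\Map_{\Op_\Ass}(T_a,\cA) \ar[r] & \Map_{\Op_\Ass}(T_a,\cB)
}
\]
is cartesian, the vertical maps being induced by $p'$ and the horizontal ones by $f^!$ and $f$. So the whole proof reduces to checking this pullback condition for an arbitrary test object $T$, whereas the hypothesis (that $f^!$ is $p$-cartesian) only supplies it when $T$ lies in $\Alg_\LM(\Cat)$.

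The first step is to reduce each corner to a mapping space computed in $\Alg_\LM(\Cat)$ and $\Alg_\Ass(\Cat)$. Since $(\cA,\cX)$ and $(\cB,\cX)$ are $\LM$-monoidal and $\cA,\cB$ are monoidal, the full inclusions $\Alg_\LM(\Cat)\hookrightarrow\Op_\LM$ and $\Alg_\Ass(\Cat)\hookrightarrow\Op_\Ass$ admit left adjoints given by the monoidal envelope (\cite{L.HA}, 2.2.4). Writing $\mathrm{Env}$ for these left adjoints, the adjunction yields natural equivalences
\[
\Map_{\Op_\LM}(T,(\cA,\cX))\simeq\Map_{\Alg_\LM(\Cat)}(\mathrm{Env}_\LM(T),(\cA,\cX)),\qquad
\Map_{\Op_\Ass}(T_a,\cA)\simeq\Map_{\Alg_\Ass(\Cat)}(\mathrm{Env}_\Ass(T_a),\cA),
\]
and likewise with $\cB$ in place of $\cA$. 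Thus the displayed square is identified with the analogous mapping-space square formed inside $\Alg_\LM(\Cat)$ and $\Alg_\Ass(\Cat)$, tested against the objects $\mathrm{Env}_\LM(T)$ and $\mathrm{Env}_\Ass(T_a)$.

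The crucial point, which I would isolate as a lemma, is that these two test objects match up: the envelope commutes with the base change $p'=a^*$ along $a:\Ass\to\LM$, so that $p\bigl(\mathrm{Env}_\LM(T)\bigr)\simeq\mathrm{Env}_\Ass\bigl(p'(T)\bigr)=\mathrm{Env}_\Ass(T_a)$. This I would verify at the level of the fibre products defining the envelope via active arrows, using that the active arrows of $\Ass$ are exactly the active arrows of $\LM$ lying over the associative color. Granting this compatibility, the vertical maps of the transported square are precisely the structure maps of $p$ for the arrow $f^!$, so the square becomes the mapping-space square measuring whether $f^!$ is $p$-cartesian, now tested against $\mathrm{Env}_\LM(T)\in\Alg_\LM(\Cat)$.

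Finally, since $f^!$ is $p$-cartesian by hypothesis, this square is cartesian for every object of $\Alg_\LM(\Cat)$, in particular for $\mathrm{Env}_\LM(T)$. Transporting back through the envelope equivalences, the original square is cartesian for every $T\in\Op_\LM$, so $f^!$ is $p'$-cartesian. The main obstacle is the envelope–base-change compatibility of the penultimate paragraph; the remaining steps are formal consequences of the cartesian-arrow criterion and the universal property of $\mathrm{Env}$.
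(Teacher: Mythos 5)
Your proposal is correct and follows essentially the same route as the paper: the paper's proof also passes through the monoidal envelope adjunction (there denoted $P_\LM$, $P_\Ass$) and rests on exactly the compatibility $P_\LM(\cO)_a\simeq P_\Ass(\cO_a)$ that you isolate as the crucial point. You have merely spelled out the mapping-space bookkeeping that the paper compresses into the phrase ``the lemma immediately follows''.
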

\begin{proof}
The embedding $\Alg_\LM(\Cat)\to\Op_\LM$ has a left adjoint functor
denoted $P_\LM$ (monoidal envelope functor). Similarly,
$P_\Ass:\Op_\Ass\to\Alg_\Ass(\Cat)$ is left adjoint to the embedding.
The lemma immediately follows from the equivalence
$$P_\LM(\cO)_a=P_\Ass(\cO_a)$$
valid for any $\cO\in\Op_\LM$.
\end{proof}

\section{Relative tensor product and duality}
\label{sec:rtp}

\subsection{Introduction}
This section is mostly an exposition of (parts of) Lurie's \cite{L.HA}, 4.6 and 3.1. In it we do the following.
\begin{itemize}
\item[1.] Starting with a monoidal category $\cC$ with geometric realizations, we construct a $2$-category $\BMOD(\cC)$
~\footnote{More precisely, a category object in $\Cat$, that is, a simplicial object in $\Cat$ satisfying the Segal condition.} 
called the Morita $2$-category of $\cC$, whose objects are associative algebras in $\cC$, so that 
the category of morphisms $\Fun(A,B)$ is the category of 
$A$-$B$-bimodules. Composition of arrows in $\BMOD(\cC)$ 
is given by the relative tensor product of bimodules. 
The description of $\BMOD(\cC)$ is based on a study of an 
operad $\TENS$ over $\Ass^\otimes_{\Delta^\op}$, see~\cite{H.EY}, 
2.10.5 (3) \footnote{Lurie \cite{L.HA} describes it as a family
of operads based on $\Delta^\op$.} describing  collections of 
bimodules and multilinear maps between them.
\item[2.] Duality for bimodules describes adjunction 
between morphisms in $\BMOD(\cC)$. The special case, 
when the unit and the counit of the adjunction are 
equivalences, describes a Morita equivalence between the 
corresponding associative algebras in $\cC$.
\item[3.] A more general type of the relative tensor 
product of bimodules, with different bimodules belonging
to different categories, is described
using the same operad  $\TENS$ and the ones obtained from it by a base change.
\end{itemize}

\subsection{Morita $2$-category and the operad $\TENS$}

We now present a construction of the Morita $2$-category
$\BMOD(\cC)$ for a monoidal category $\cC$ with geometric realizations.
We describe $\BMOD(\cC)$ as a Segal simplicial object
in $\Cat$, carrying $[n]$ to a category
$\BMOD_n(\cC)$. The category $\BMOD_n(\cC)$ can be 
described as the category of algebras in $\cC$ over a 
certain planar operad (in sets).

We will denote this operad $\TENS_n$. Algebras over it
are collections $(A_0,\ldots,A_n)$ of associative algebras
in $\cC$, together with a collection of $A_{i-1}$-$A_i$-bimodules
$M_i$ for $i=1,\ldots,n$. So, $\BMOD_n(\cC)=\Alg_{\TENS_n}(\cC)$. To define the simplicial object
$\BMOD_\bullet(\cC)$, we have to provide a compatible collection of functors $s^*:\BMOD_n(\cC)\to\BMOD_m(\cC)$ defined for each $s:[m]\to[n]$ in $\Delta$, together with
the coherence data.

For a map $s:[m]\to[n]$ the functor
$s^*:\BMOD_n(\cC)\to\BMOD_m(\cC)$
comes from a correspondence between the operads $\TENS_m$ and $\TENS_n$.
 
We will present an operad
$\TENS_s$ over $\Ass^\otimes_{[1]}$ with fibers $\TENS_m$ and 
$\TENS_n$ over $0$ and $1$ respectively.
\footnote{Recall that $\Ass^\otimes$ denotes the operad governing 
associative algebras. The operad $\Ass^\otimes_K$ for $K\in\Cat$ governs 
$K$-diagrams of associative algebras.}

The functors $i^*_0:\Alg_{\TENS_s}(\cC)\to\BMOD_m(\cC)$
and $i^*_1:\Alg_{\TENS_s}(\cC)\to\BMOD_n(\cC)$ are defined by the embeddings $i_0:\TENS_m\to\TENS_s$ and 
$i_1:\TENS_n\to\TENS_s$.

The map $s^*:\BMOD_n(\cC)\to\BMOD_m(\cC)$
will be defined as the composition
$i^*_0\circ i_{1!}$ where $i_{1!}$ is left adjoint
to $i^*_1$.

In order to describe the compatibility of $s^*$ with respect to composition, we will define an operad $\TENS$ over 
$\Ass^\otimes_{\Delta^\op}=\Ass^\otimes\times\Com_{\Delta^\op}$. We will have
$\TENS_s=\TENS\times_{\Ass^\otimes_{\Delta^\op}}\Ass^\otimes_{[1]}$ where 
$s:[1]\to\Delta^\op$ defines the map
$\Ass^\otimes_{[1]}\to\Ass^\otimes_{\Delta^\op}$.

\subsubsection{The operad $\TENS$}
\label{sss:TENS}
$\TENS$ is the operad in sets governing the following 
collection of data.

\begin{itemize}
\item[1.] For each $n\geq 0$ the collection of monoids
$A_{0,n},\ldots,A_{n,n}$ and $A_{i-1}$-$A_i$ bimodules
$M_{i,n}$ for $i=1,\ldots,n$.
\item[2.] For each map $s:[m]\to[n]$ in $\Delta$ the collection of arrows:
\begin{itemize}
\item[a.] Morphism of algebras $A_{s(i),n}\to A_{i,m}$, for $i=0,\ldots,m$.
\item[b.] Multilinear morphisms (see remark below)
$$M_{s(i-1)+1,n}\times\ldots\times M_{s(i),n}\to M_{i,m}$$
of $A_{s(i-1),n}$-$A_{s(i),n}$-bimodules.
\end{itemize}
\item[3.] The collections of arrows defined in (2) for
each $s:[m]\to[n]$ compose in an obvious way.
\end{itemize}
\begin{Rem}Multilinearity in the last sentence means that, in case
$s(i-1)+1<s(i)$, the map is compatible with the actions of
all intermidiate $A_{j,n}$, $j=s(i-1)+2,\ldots,s(i)-1$; 
it means nothing if $s(i)=s(i-1)+1$; and it means an
$A_{s(i),n}$-bimodule map $A_{s(i),n}\to M_{i,m}$ if
$s(i-1)=s(i)$.
\end{Rem}

\subsubsection{The map to $\Ass^\otimes_{\Delta^\op}$}

An $\Ass^\otimes_{\Delta^\op}$-algebra in an $\Ass^\otimes$-operad $\cC$
is given by a functor $A:\Delta^\op\to\Alg_\Ass(\cC)$. 
This functor defines a canonical $\TENS$-algebra defined by the formulas $A_{i,n}=A([n])=M_{i,n}$. This gives a functor
$\pi^*:\Alg_{\Ass_{\Delta^\op}}(\cC)\to\Alg_\TENS(\cC)$
realized as the inverse image with respect to the map
$$\pi:\TENS\to\Ass^\otimes_{\Delta^\op}.$$

\subsubsection{}
\label{sss:BMOD}
For any $\phi:S\to\Delta^\op$ one defines
$\TENS_S$ (or $\TENS_\phi$) as $\Com_S\times_{\Com_{\Delta^\op}}\TENS$.

One defines $p:\BMOD(\cC)\to\Delta^\op$ as a category 
over $\Delta^\op$ representing the functor
\begin{equation}
\Fun_{\Delta^\op}(B,\BMOD(\cC))=
\Alg_{\TENS_B}(\cC).
\end{equation}

In the case when $\cC$ has geometric realizations and a monoidal
structure preserving geometric realizations, $\BMOD(\cC)$
is a cocartesian fibration over $\Delta^\op$, so it defines
a simplicial object $\BMOD_\bullet(\cC)$ in $\Cat$,
see \cite{L.HA}, 4.4.3.12. It satisfies the Segal condition
by \cite{L.HA}, 4.4.3.11.

\begin{rem}
Note that $\BMOD(\cC)$ is not complete. The zero component
$\BMOD_0(\cC)$ is the category of algebras in $\cC$ which is not a space.
An equivalence defined by $\BMOD_1(\cC)$ is a Morita equivalence which is 
not equivalence in $\BMOD_0(\cC)$.
\end{rem}

\subsection{Duality}

In this subsection we apply the general notion of
adjunction in a 2-category to the Morita 2-category 
described in the previous subsection.

\begin{dfn}(see \cite{L.HA}, 4.6.2.3)
Let $\cC$ be a monoidal category with geometric realizations. Let $A,B$ be two associative algebras in $\cC$, $M\in_A\!\!\BMod_B(\cC)$ and $N\in_B\!\!\BMod_A(\cC)$.
A map $c:B\to N\otimes_AM$ is said to exhibit $N$ as
left dual of  $M$ (or $M$ as a right dual of $N$) if
there exists $e:M\otimes_BN\to A$ in $_A\BMod_A(\cC)$
such that the compositions
$$
M= M\otimes_BB\stackrel{\id_M\otimes c}{\to} M\otimes_BN\otimes_AM\stackrel{e\otimes\id_M}{\to}M
$$
and
$$
N= B\otimes_BN\stackrel{c\otimes\id_N}{\to} N\otimes_AM\otimes_BN\stackrel{\id_N\otimes e}{\to}N
$$
are equivalent to $\id_M$ and $\id_N$, respectively.
\end{dfn}
 
Let $\cM$ be a left $\cC$-tensored category with geometric realizations.
A dual pair of bimodules $M\in_A\!\!\BMod_B(\cC)$
and $N\in_B\!\!\BMod_A(\cC)$ determines an adjunction
(see [L.HA], 4.6.2.1)
\begin{equation}
F:\LMod_B(\cM)\rlarrows\LMod_A(\cM):G
\end{equation} 
given by the formulas $F(X)=M\otimes_BX$
and $G(Y)=N\otimes_AY$. This adjunction deserves the name
{\sl Morita adjunction}. 

A Morita adjunction is called a Morita equivalence if the
arrows $c$ and $e$ are equivalences.

Two properties of Morita adjunction are listed below.
The first one, Proposition~\ref{prp:composition},
describes a good behavior of Morita adjunctions under composition. The second one, Proposition~\ref{prp:noB},
claims that the left dualizability of 
$M\in_A\!\!\BMod_B(\cC)$ is independent of the algebra $B$.

\begin{prp}(see~\cite{L.HA}, 4.6.2.6)
\label{prp:composition}
let $\cC$ be a monoidal category with geometric realizations, $A,B,C$ three associative algebras in $\cC$.
If $c:B\to N\otimes_AM$ exhibits $N$ as a left dual to 
$M\in_A\!\!\BMod_B$ and $c':C\to N'\otimes_BM'$
exhibits $N'$ as a left dual to $M'\in_B\!\!\BMod_C$ then the composition
%\begin{eqnarray}
$$ C\stackrel{c'}{\to} N'\otimes_BM'=N'\otimes_BB\otimes_BM'\stackrel{c}{\to} 
N'\otimes_B N\otimes_AM\otimes_BM'$$
%\end{eqnarray}
exhibits $N'\otimes_BN$ as a left dual to $M\otimes_BM'$.
\end{prp}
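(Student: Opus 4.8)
The plan is to recognize the statement as the elementary $2$-categorical fact that \emph{adjunctions compose}, interpreted inside the Morita $2$-category $\BMOD(\cC)$ constructed in the previous subsection. Recall that in $\BMOD(\cC)$ a $1$-morphism is a bimodule, composition of $1$-morphisms is the relative tensor product, and the identity $1$-morphism at an algebra $A$ is $A$ itself viewed as an $A$-$A$-bimodule. With the variance convention in which ${}_A M_B$ is a $1$-morphism $B\to A$ and composition of $B\xrightarrow{M}A\xrightarrow{P}D$ is $P\otimes_A M$, the data of the definition of left duality is exactly an adjunction: $M\in{}_A\BMod_B$ and $N\in{}_B\BMod_A$ are composable $1$-morphisms with $N\circ M=N\otimes_A M$ and $M\circ N=M\otimes_B N$; the map $c:B\to N\otimes_A M$ is a unit $\id_B\Rightarrow N\circ M$, the map $e:M\otimes_B N\to A$ is a counit $M\circ N\Rightarrow\id_A$, and the two displayed equations are precisely the triangle identities. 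Thus $(c,e)$ exhibits an adjunction $M\dashv N$ between $B$ and $A$, and likewise $(c',e')$ exhibits $M'\dashv N'$ between $C$ and $B$.

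First I would write down the candidate unit and counit for the composite pair $M\otimes_B M'\in{}_A\BMod_C$ and $N'\otimes_B N\in{}_C\BMod_A$, namely $M\circ M'$ and $N'\circ N$. The unit is the pasting of the two given units: starting from $c':C\to N'\otimes_B M'$, one rewrites the middle as $N'\otimes_B B\otimes_B M'$ and inserts $c$ there to obtain $C\to N'\otimes_B N\otimes_A M\otimes_B M'=(N'\otimes_B N)\otimes_A(M\otimes_B M')$. This is \emph{exactly} the map displayed in the statement. The counit is the pasting of the two given counits: using associativity of the relative tensor product one identifies $(M\otimes_B M')\otimes_C(N'\otimes_B N)$ with $M\otimes_B(M'\otimes_C N')\otimes_B N$, applies $e'$ to the inner factor to reach $M\otimes_B N$, and then applies $e$ to land in $A$.

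Next I would verify the two triangle identities for this composite pair. In the strict picture this is the standard string-diagram cancellation: by the interchange law each composite zig-zag splits into an inner zig-zag for $(c',e')$ along $M'$ (resp.\ $N'$) and an outer zig-zag for $(c,e)$ along $M$ (resp.\ $N$), and each of these is the identity by hypothesis; the second triangle identity is symmetric. Since the only manipulations used are the associativity and unitality of relative tensor products together with interchange of the two given $2$-morphisms, the computation is purely formal once the framework is in place.

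The one genuine point is coherence in the $\infty$-categorical setting: the reassociations of iterated relative tensor products and the interchange law must hold up to coherent homotopy rather than on the nose. This is precisely what is supplied by the construction of $\BMOD(\cC)$ as a Segal simplicial object in $\Cat$ via the operad $\TENS$, which makes it an honest $2$-category object with coherently associative and unital composition. Working entirely inside $\BMOD(\cC)$ therefore lets one import the strict ``adjoints compose'' argument verbatim, and the proposition is exactly this instance, recorded in Lurie's formulation as \cite{L.HA}, 4.6.2.6.
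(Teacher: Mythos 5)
The paper gives no proof of this proposition at all: it simply cites \cite{L.HA}, 4.6.2.6, so there is nothing internal to compare against. Your argument --- reading the duality data as an adjunction in the Morita $2$-category $\BMOD(\cC)$, pasting the units and counits, and checking the triangle identities via interchange and associativity of the relative tensor product --- is correct and is essentially the standard argument underlying that citation; you also rightly identify the only genuine issue (coherence of composition in the $\infty$-setting) and correctly observe that the Segal-object structure of $\BMOD(\cC)$, together with the fact that left duality as defined here is merely the \emph{existence} of a counit $e$ making two composites equivalences, is all that is needed.
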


\begin{prp}(see~\cite{L.HA}, 4.6.2.12, 4.6.2.13)
\label{prp:noB}
Let $\cC$ be as above. A bimodule $M\in_A\!\!\BMod_B(\cC)$
is left dualizable if and only if its image $M'$ in 
\newline
$\LMod_A(\cC)= _A\!\!\BMod_\one(\cC)$ is left dualizable. 
Moreover, if $N\in _B\!\!\BMod_A(\cC)$ is left dual to $M$, its image in $\RMod_A(\cC)$ is a left dual of $M'\in\LMod_A(\cC)$.
\end{prp}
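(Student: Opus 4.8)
The plan is to prove the two implications of the equivalence separately, extracting the displayed ``moreover'' clause as a byproduct of the easier one.

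For the forward direction I would first record that the algebra $B$ is self-dual over its unit. Writing ${}_BB_\one$ for $B$ regarded as a $B$-$\one$-bimodule via the unit $\one\to B$, and ${}_\one B_B$ for $B$ regarded as a $\one$-$B$-bimodule, the unit $\one\to {}_\one B_B\otimes_B{}_BB_\one=B$ (the algebra unit) and the counit ${}_BB_\one\otimes_\one{}_\one B_B=B\otimes B\to B$ (the multiplication) satisfy the triangle identities by the associative algebra axioms; hence ${}_\one B_B$ is left dual to ${}_BB_\one$. Now suppose $c\colon B\to N\otimes_AM$ exhibits $N\in{}_B\BMod_A(\cC)$ as a left dual of $M$. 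Applying Proposition~\ref{prp:composition} to the pair $(M,N)$ followed by the pair $({}_BB_\one,{}_\one B_B)$ shows that ${}_\one B_B\otimes_BN$ is left dual to $M\otimes_B{}_BB_\one$. Since $M\otimes_B{}_BB_\one=M'$ is exactly the image of $M$ in ${}_A\BMod_\one(\cC)=\LMod_A(\cC)$ and ${}_\one B_B\otimes_BN=N'$ is the image of $N$ in ${}_\one\BMod_A(\cC)=\RMod_A(\cC)$, this proves that $M'$ is left dualizable and that its left dual is the asserted image of $N$. This disposes of the forward implication and the ``moreover'' clause at once.

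For the converse, which is the substantive half, I would analyse the functor $F=M\otimes_B(-)\colon\LMod_B(\cC)\to\LMod_A(\cC)$. Because the tensor product of $\cC$ preserves colimits in each variable, the relative tensor product $M\otimes_B(-)$, computed as the geometric realization of the bar construction with terms $M\otimes B^{\otimes n}\otimes(-)$, preserves all colimits, and in particular geometric realizations. The plan is to produce a right adjoint of $F$ of the form $N\otimes_A(-)$ for a suitable $B$-$A$-bimodule $N$, together with a unit and counit exhibiting $(M,N)$ as a dual pair. The free functor $\mathrm{Free}_B\colon\cC=\LMod_\one(\cC)\to\LMod_B(\cC)$, $X\mapsto B\otimes X$, satisfies $F\circ\mathrm{Free}_B=M'\otimes(-)$, and by the hypothesised left dualizability of $M'$ this composite has right adjoint $N'\otimes_A(-)\colon\LMod_A(\cC)\to\cC$. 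Since $\mathrm{Free}_B$ is left adjoint to the forgetful functor, this forces the underlying object in $\cC$ of any right adjoint value $G(Y)$ to be $N'\otimes_A Y$; I would then equip $N:=N'$ with the left $B$-module structure transported, through the duality between $M'$ and $N'$, from the right $B$-action on $M$, upgrading $N'$ to a $B$-$A$-bimodule $N$ and the above functor to $G=N\otimes_A(-)\colon\LMod_A(\cC)\to\LMod_B(\cC)$.

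It then remains to verify that $G$ is genuinely right adjoint to $F$ and that the resulting unit and counit assemble into maps $c\colon B\to N\otimes_A M$ in ${}_B\BMod_B(\cC)$ and $e\colon M\otimes_B N\to A$ in ${}_A\BMod_A(\cC)$ obeying the triangle identities. Here I would use the bar resolution $X\simeq|B^{\otimes\bullet+1}\otimes X|$ of an arbitrary left $B$-module by free modules: on free modules the required equivalence $\Map_A(FX,Y)\simeq\Map_B(X,GY)$ reduces to $\Map_A(M'\otimes X_0,Y)\simeq\Map_\cC(X_0,N'\otimes_A Y)$, which is exactly the dualizability of $M'$, and both sides carry the bar resolution to the same limit because $F$ preserves the relevant geometric realizations while $G$ is defined levelwise. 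I expect the main obstacle to lie precisely here, namely in constructing the left $B$-action on $N'$ canonically and in checking that the unit and counit are morphisms of bimodules satisfying the triangle identities, rather than merely morphisms of the underlying left $A$-modules. This bookkeeping is the content of Lurie's \cite{L.HA}, 4.6.2.12, which may alternatively be invoked directly to conclude.
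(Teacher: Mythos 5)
The paper does not actually prove this statement: it is quoted from Lurie (\cite{L.HA}, 4.6.2.12, 4.6.2.13) and closed with an immediate \qed, so there is no in-paper argument to compare yours against. Your forward direction (and the ``moreover'' clause) is a complete and correct argument: the pair $({}_BB_\one,{}_\one B_B)$ is indeed a dual pair via the unit and multiplication of $B$, and feeding it into Proposition~\ref{prp:composition} together with the given dual pair $(M,N)$ yields exactly that the image of $N$ in $\RMod_A(\cC)$ is left dual to $M'=M\otimes_BB$. This is a cleaner derivation than simply citing Lurie for that half. For the converse, your setup is right --- the underlying object of any would-be right adjoint to $M\otimes_B(-)$ is forced to be $N'\otimes_A(-)$ by precomposing with the free/forgetful adjunction --- but, as you yourself flag, the actual content is the coherent promotion of the transported binary action $B\otimes N'\to N'$ to a genuine left $B$-module structure and the verification that $c$ and $e$ are bimodule maps satisfying the triangle identities; at that point you fall back on Lurie's 4.6.2.12. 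Since the paper defers the entire proposition to Lurie, your proposal is at least as complete as the paper's treatment, and the only genuine gap is the one you have already identified and correctly located.
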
\qed

\begin{rem}
In the classical context of associative rings, an
$(A,B)$-bimodule $N$ is right-dualizable iff it is finitely
generated projective as a right $A$-module. This property is 
independent of $B$ and right dualizability of $N$  is sufficient to 
have an adjunction between the categories of left $A$ and $B$-
modules. This adjunction is an equivalence, for $B=\End_A(N)$, if
$N$ is a generator in $\RMod_A$. It would be very nice
to describe in our general context a condition on a right dualizable 
module $N\in\RMod_A$ leading to Morita equivalence. 
\end{rem}

\subsubsection{}

We can fix a right-dualizable $A$-module $N\in\RMod_A(\cC)$
and try to reconstruct a would-be Morita equivalence.

Let $M\in\LMod_A(\cC)$ be the right dual of $N$.

The category $\RMod_A(\cC)$ is left-tensored over $\cC$.
So, given $N\in\RMod_A(\cC)$, one can define an  
endomorphism object $\End_A(N)$ which, if it exists, 
acquires an associative algebra structure. Since $N$ is right dualizable, this object does exist, as one has a canonical
equivalence
\begin{equation}
\Map_\cC(X,N\otimes_AM)=\Map_{\RMod_A(\cC)}(X\otimes N,N)
\end{equation}
by~\cite{L.HA}, 4.6.2.1 (3), so that $\End_A(N)=N\otimes_AM$ as an object of $\cC$.

\begin{crl}
\label{crl:morita-semi}
Let $\cC$ be a monoidal category with geometric realizations, $A$ an associative algebra in $\cC$, $N\in\RMod_A(\cC)$ a right dualizable $A$-module. Then $M\in\LMod_A(\cC)$, the right dual of $N$, has a canonical structure of  $A$-$\End_A(N)$-bimodule and the pair $(M,N)$ defines a Morita adjunction
$$F:\LMod_{\End_A(N)}(\cC)\rlarrows\LMod_A(\cC),$$
with $F(X)=M\otimes_{\End_A(N)}X$ and
$G(Y)=N\otimes_AY$, for which the coevaluation
$$ c:\End_A(N)\to N\otimes_AM$$
is an equivalence.
\end{crl}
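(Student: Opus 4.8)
The plan is to assemble the Morita adjunction from the duality data already produced and then to identify its coevaluation with the canonical equivalence $\gamma\colon\End_A(N)\xrightarrow{\sim}N\otimes_A M$ obtained above. Write $B=\End_A(N)$. Since $N$ is right dualizable with right dual $M$ we have an evaluation $e\colon M\otimes N\to A$ in ${}_A\BMod_A(\cC)$ and, by \cite{L.HA}, 4.6.2.1(3), a natural equivalence $\Phi_X\colon\Map_\cC(X,N\otimes_A M)\xrightarrow{\sim}\Map_{\RMod_A(\cC)}(X\otimes N,N)$ carrying $g$ to $(\id_N\otimes e)\circ(g\otimes\id_N)$. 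The object $B$ represents the functor $X\mapsto\Map_{\RMod_A(\cC)}(X\otimes N,N)$, and $\gamma$ is precisely the map for which $\Phi_B(\gamma)$ is the tautological action $a\colon B\otimes N\to N$.

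First I would record the bimodule structures. The action $a$ exhibits $N$ as a left $B$-module, so $N\in{}_B\BMod_A(\cC)$, and its underlying right $A$-module is right dualizable by hypothesis. By Proposition~\ref{prp:noB}, applied to right duals (the form of \cite{L.HA}, 4.6.2.12--4.6.2.13), $N$ is then right dualizable as a $B$-$A$-bimodule, and its right dual is the object $M$ equipped with a canonical $A$-$B$-bimodule structure whose underlying left $A$-module is the original $M$; this is the asserted $A$-$\End_A(N)$-bimodule structure. The dual pair $(M,N)$ of $(A,B)$-bimodules then determines, as explained after the definition of duality, the Morita adjunction $F\colon\LMod_B(\cC)\rlarrows\LMod_A(\cC)\colon G$ with $F(X)=M\otimes_B X$ and $G(Y)=N\otimes_A Y$. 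This settles the first two assertions.

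It remains to prove that the coevaluation $c\colon B\to N\otimes_A M$ of this adjunction is an equivalence, and for this I would show $c=\gamma$. As $\gamma$ is an equivalence, it suffices to check $\Phi_B(c)=a$. Now $\Phi_B(c)=(\id_N\otimes e)\circ(c\otimes\id_N)$, and the $\one$-level evaluation $e$ factors as $M\otimes N\to M\otimes_B N\xrightarrow{e_B}A$ through the evaluation $e_B$ of the duality over $B$ (this compatibility is part of Proposition~\ref{prp:noB}). Using that $c$ is a map of left $B$-modules and that the canonical projection $B\otimes N\to B\otimes_B N\cong N$ is the action $a$, the composite $\Phi_B(c)$ rewrites as $a$ followed by the triangle-identity composite $N=B\otimes_B N\xrightarrow{c\otimes\id_N}N\otimes_A M\otimes_B N\xrightarrow{\id_N\otimes e_B}N$, which is $\id_N$. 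Hence $\Phi_B(c)=a=\Phi_B(\gamma)$, so $c=\gamma$ is an equivalence.

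The main obstacle is this last step: the coevaluation of the freshly built adjunction is a priori pinned down only up to the triangle identities over $B$, and one must recognise it as the canonical comparison $\gamma$ coming from the endomorphism-object universal property. Everything turns on matching the $\one$-level evaluation $e$ with its $B$-level descent $e_B$ and on the $B$-linearity of $c$; once the correct triangle identity is invoked the identity $\Phi_B(c)=a$ is formal, but keeping the various relative tensor products and the two module structures coherent is where the genuine bookkeeping lies.
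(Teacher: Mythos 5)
Your argument is correct: the paper's own proof is simply the citation to \cite{L.HA}, 4.6.2.1(2), and your reconstruction — transporting right dualizability of $N$ across Proposition~\ref{prp:noB} to get the $B$-level dual pair, then using the representability equivalence $\Map_\cC(X,N\otimes_AM)\simeq\Map_{\RMod_A(\cC)}(X\otimes N,N)$ (which the paper records just before the corollary) together with a triangle identity to identify the coevaluation with the canonical comparison map — is exactly the argument that citation points to. So this is essentially the same approach, just with the details of Lurie's proof written out.
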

\begin{proof}
See ~\cite{L.HA}, 4.6.2.1 (2).
\end{proof}

Note  that this construction produces the $A$-$A$-bimodule 
evaluation map  
\begin{equation}
\label{eq:eval}
e:M\otimes_{\End_A(N)}N\to A.
\end{equation} 

\begin{rem}
\label{rem:viam}
The algebra $B=\End_A(N)$ can be also described in terms of
the left $A$-module $M$. In fact, the category 
$\LMod_A(\cC)$ is right-tensored over $\cC$, so it is left-tensored over the reversed monoidal category 
$\cC^\rev$. The endomorphism object of $M\in\LMod_A(\cC)$
in $\cC^\rev$ exists, and it coincides with the algebra
$B^\op$.
\end{rem}

\subsection{Relative tensor product}
\label{ss:relativetensor}

The relative tensor product of bimodules with values
in a monoidal category $\cC$ is encoded in the composition
of arrows of $\BMOD(\cC)$. There exists a slightly
more general relative tensor product, for the bimodules
having values in different categories.

Let now $\cC\in\Alg_{\TENS_S}(\Cat^L)$ where, as before, $\Cat^L$ 
denotes the category of categories with small colimits, with the arrows
being the colimit preserving functors. 

We wish to study tensor  product of bimodules with values 
in $\cC$.  

We define, slightly generalizing \ref{sss:BMOD},  
$p:\BMOD^\phi(\cC)\to S$ as a category over $S$ representing the functor
$$
\Fun_S(B,\BMOD^\phi(\cC))=
\Alg_{\TENS_B/\TENS_S}(\cC).
$$

One has
\begin{prp} 
\label{prp:S-family}
\begin{itemize}
\item[1.]The map $p:\BMOD^\phi(\cC)\to S$  is
a cocartesian fibration. 
\item[2.]An arrow $\tilde\alpha$ in 
$\BMOD^\phi(\cC)$ over $\alpha:x\to y$  in $S$  
 is $p$-cocartesian iff the corresponding
$F_\alpha\in\Alg_{\TENS_{\phi(\alpha)}/\TENS_S}(\cC)$ is 
an operadic left Kan extension of its restriction
$F_{\phi(x)}:\TENS_{\phi(x)}\to\cC$.
\item[3.] Let $f:\cC\to\cD$ be a $\TENS_\phi$-monoidal functor preserving geometric realizations. Then the induced
map $\BMOD^\phi(f):\BMOD^\phi(\cC)\to\BMOD^\phi(\cD)$
preserves cocartesian arrows.
\end{itemize}
\end{prp}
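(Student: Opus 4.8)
The plan is to deduce all three assertions from the theory of operadic left Kan extensions recalled in~\ref{ss:OLKE}, viewing the statement as the $S$-family generalization of Lurie's~\cite{L.HA}, 4.4.3.12, which is the case $S=\Delta^\op$, $\phi=\id$, with $\cC$ pulled back from a single monoidal category. The organizing observation is that an arrow $\alpha\colon x\to y$ of $S$ produces, via $\phi$, an operad $\TENS_{\phi(\alpha)}$ over the arrow $[1]$ whose two fibers are $\TENS_{\phi(x)}$ and $\TENS_{\phi(y)}$, together with the source inclusion $i\colon\TENS_{\phi(x)}\to\TENS_{\phi(\alpha)}$; the cocartesian pushforward along $\alpha$ will be $i_0^*\circ i_!$, exactly as in the construction of the structure maps of $\BMOD_\bullet(\cC)$ recalled after~\ref{sss:TENS}.

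I would prove parts~1 and~2 together. The crux is the existence of the operadic left Kan extension $i_!$ along the source inclusion. As $\cC$ is only assumed to admit $\cK$-colimits rather than all colimits, existence is not automatic from~\ref{ss:OLKE}; instead one checks that the operadic colimits computing $i_!$ are geometric realizations, which exist since $\Delta^\op\in\cK$. Concretely, for an object $q$ of $\TENS_{\phi(\alpha)}$ the comma category $K_q=\TENS_{\phi(x)}\times_{\TENS_{\phi(\alpha)}}(\TENS_{\phi(\alpha)})^\act_{/q}$ is, up to a cofinal simplicial diagram, the indexing category of a two-sided bar construction, so $i_!$ is computed by the relative tensor product (\cite{L.HA}, 4.4.2.8). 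Granting existence, the algebra $i_!(F_{\phi(x)})$ defines an arrow of $\BMOD^\phi(\cC)$ over $\alpha$, and Lurie's identification of cocartesian arrows with operadic left Kan extensions (\cite{L.HA}, 4.4.3.12, whose argument is local over arrows of the base and so applies verbatim with $\Delta^\op$ replaced by $S$) shows both that this arrow is $p$-cocartesian and that every $p$-cocartesian arrow arises this way. This simultaneously gives that $p$ is a cocartesian fibration and the stated criterion.

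For part~3, a $\TENS_\phi$-monoidal functor $f\colon\cC\to\cD$ preserving geometric realizations induces $f_*\colon\Alg_{\TENS_B}(\cC)\to\Alg_{\TENS_B}(\cD)$ for every $B\to S$, compatibly in $B$, hence a map $\BMOD^\phi(f)$ over $S$. By part~2 it suffices to show that $f_*$ carries $i_!(F_{\phi(x)})$ to $i_!(f_*F_{\phi(x)})$, that is, that $f_*$ preserves the operadic colimit diagrams computing $i_!$. Since, by the definition recalled in~\ref{ss:OLKE}, such a diagram is a relative colimit diagram assembled from the monoidal structure, and since $f$ is $\TENS_\phi$-monoidal and preserves geometric realizations, it carries these diagrams to operadic colimit diagrams; hence $\BMOD^\phi(f)$ preserves cocartesian arrows.

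The step I expect to be the main obstacle is the verification, used throughout, that the operadic colimits defining $i_!$ are genuinely geometric realizations rather than more general colimits: this is precisely what makes the hypotheses ``$\cC$ has geometric realizations'' and ``$f$ preserves geometric realizations'' sufficient. Establishing it requires an explicit analysis of the comma categories $K_q$ attached to the source inclusion $\TENS_{\phi(x)}\to\TENS_{\phi(\alpha)}$, together with a cofinality argument identifying them with $\Delta^\op$-shaped bar diagrams, in the spirit of the cofinality computation in Lemma~\ref{lem:cofinal}. Once this identification is in place, all three parts follow from the general machinery.
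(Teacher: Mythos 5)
Your proposal is correct, but it does substantially more work than the paper, which disposes of parts~1 and~2 in one line by citing \cite{L.HA}, Corollary~4.4.3.2 with $\cO=\TENS_S$: that corollary is already stated for an arbitrary base $S\to\Delta^\op$, so no generalization from the case $S=\Delta^\op$ is required, and its hypothesis (condition~(*)) is exactly ``$\cC$ is $\TENS_S$-monoidal with geometric realizations compatible with the tensor structure,'' which holds because $\Delta^\op\in\cK$. What you propose --- existence of $i_!$ via the identification of the comma categories $K_q$ with two-sided bar diagrams (\cite{L.HA}, 4.4.2.8), plus the locality of the cocartesian criterion over arrows of $S$ --- is in effect a sketch of Lurie's proof of that corollary rather than an alternative to it; it is sound, though if you carry it out you should also note that passing from a locally cocartesian to a cocartesian fibration uses closure of locally cocartesian edges under composition, which here follows from transitivity of operadic left Kan extensions. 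Your deduction of part~3 from part~2 (a $\TENS_\phi$-monoidal functor preserving geometric realizations preserves the operadic colimit diagrams that witness cocartesianness) is exactly the paper's ``Claim~3 follows from Claim~2.''
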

\begin{proof}
The first two claims are just \cite{L.HA}, Corollary 4.4.3.2, with
$\cO=\TENS_S$. The condition (*) is fulfilled as
$\cC$ is $\TENS_S$-monoidal category with geometric realizations, commuting with the tensor product.
Claim 3 follows from Claim 2.
\end{proof}

\subsubsection{}
Here is an important example of the above construction.

Let $\succ:[1]\to\Delta^\op$ be defined by the arrow
$\partial^1:[1]\to[2]$ in $\Delta$. We have then
$\TENS_\succ=\Com_{[1]}\times_{\Com_{\Delta^\op}}\TENS$.

One has natural embeddings 
$i_1:\TENS_1\to\TENS_\succ$ and 
$i_2:\TENS_2\to\TENS_\succ$ induced by the
embedding of the ends $\{1\}\to[1]$ and $\{0\}\to[1]$.

Note that $\TENS_1=\BM^\otimes$ and $\TENS_2=\BM^\otimes\sqcup^{\Ass^\otimes}\BM^\otimes$.

\subsubsection{}
\label{sss:tens-succ}

Let $\cC$ be a $\TENS_\succ$-monoidal category.
Up to equivalence, $\cC$ is uniquely described by a collection of five monoidal categories
$\cC_a,\cC_b,\cC_c,\cC_{a'},\cC_{c'}$,
three bimodule categories $\cC_m
\in_{\cC_a}\!\!\BMod_{\cC_b}(\Cat)$, 
$\cC_n\in_{\cC_b}\!\!\BMod_{\cC_c}(\Cat)$, 
$\cC_k\in_{\cC_{a'}}\!\!\BMod_{\cC_{c'}}(\Cat)$, 
monoidal functors $\phi_a:\cC_a\to\cC_{a'}$ and 
$\phi_c:\cC_c\to\cC_{c'}$, and a $\cC_b$-bilinear functor
$$ \cC_m\times\cC_n\to\cC_k$$
of $\cC_a$-$\cC_c$-bimodule categories.

The embedding $i_2:\TENS_2\to\TENS_\succ$ 
induces
$$i_2^*:\Alg_{\TENS_\succ}(\cC)\to
\Alg_{\TENS_2/\TENS_\succ}(\cC).$$
The relative tensor product functor 
\begin{equation}
\label{eq:relativetensor}
\RT:\Alg_{\TENS_2/\TENS_\succ}(\cC)\to
\Alg_{\TENS_\succ}(\cC)
\end{equation}
is defined as the functor left adjoint to $i_2^*$.

The functor $\RT$ exists if $\cC\in\Alg_{\TENS_\succ}(\Cat^L)$.

It makes sense to fix associative algebras
$A\in\Alg_\Ass(\cC_a)$, $B\in\Alg_\Ass(\cC_b)$,
$C\in\Alg_\Ass(\cC_c)$,  and restrict (\ref{eq:relativetensor}) to 
$\TENS_2$-algebras in $\cC$
having algebra-components $A,B$ and $C$. 
If $A'=\phi_a(A)\in\Alg_\Ass(\cC_{a'})$
and $C'=\phi_c(C)\in\Alg_\Ass(\cC_{c'})$, this gives
\begin{equation}
\label{eq:relativetensor2}
\RT_{A,B,C}:_A\!\!\BMod_B(\cC_m)\times_B\!\!\BMod_C(\cC_n)\to
_{A'}\!\!\BMod_{C'}(\cC_k).
\end{equation}

\subsubsection{Two-sided bar construction}
\label{sss:bar}

The following explicit formula for the calculation
of relative tensor product explains why does it exist
for categories with geometric realizations.

Recall that $\TENS_2$ governs 5-tuples of objects, $(A,M,B,N,C)$,
where $A,B,C$ are associative monoids, $M$ is an $A$-$B$-bimodule
and $N$ is a $B$-$C$-bimodule. We denote the colors of
$\TENS_2$ by $a,m,b,n,c$. The operad $\TENS_1$ has colors
$a',k,c'$.
 
Define a functor $u:\Delta^\op\to\TENS_2$ carrying 
$[i]$ to $mb^in\in\TENS_2$, where the action of $u$
on the arrows is defined as follows.
\begin{itemize}
\item Faces correspond to the action maps $mb\to m$, $bb\to b$
or $bn\to n$.
\item Degeneracies correspond to the unit maps $1\to b$.
\end{itemize}

We extend the map $u:\Delta^\op\to\TENS_2$
to $u_+:\Delta_+^\op\to\TENS_\succ$ carrying the
terminal object of $\Delta_+^\op$ to $k\in\TENS_1$.  

Let $q:\cC\to\TENS_\succ$ present a $\TENS_\succ$-monoidal 
category. The map 
$\Fun(\Delta^\op,q):\Fun(\Delta^\op,\cC)\to\Fun(\Delta^\op,\TENS_\succ)$ is a cocartesian fibration.
The functor $u_+$ defines an arrow $\beta:u\to u_*$
in $\Fun(\Delta^\op,\TENS_\succ)$, $u_*$ being the constant
functor with the value $k\in\TENS_\succ$. Therefore, any
$\phi\in\Alg_{\TENS_2/\TENS_\succ}(\cC)$ gives rise to a unique lift 
$\beta_!:\phi\circ u\to X$, where $X$
is a simplicial object in $\cC_k$. We will denote $X=\Sh(\phi\circ u)$ and call it the 
two-sided bar construction, $\Sh(\phi\circ u)=\BAR(\phi)$.

The following explicit description of relative tensor product is a reformulation of~\cite{L.HA}, 4.4.2.8.

\begin{prp} 
\label{prp:colimitdescription}
Let $\cC$ be a $\TENS_\succ$-monoidal category with geometric realizations and the tensor structure commuting
with the geometric realizations, and let
$q:\cC\to\TENS_\succ$ be the corresponding cocartesian fibration.
Given a commutative diagram
\begin{equation}
\label{eq:tensorproduct}
\xymatrix{
&\TENS_2\ar^\phi[r]\ar[d] &\cC\ar^q[d]\\
&\TENS_\succ\ar@{=}[r]\ar@{-->}^\Phi[ru]&\TENS_\succ
}
\end{equation}
of marked categories, with $\phi$ corresponding to a pair
of bimodules $M\in_A\!\!\BMod_B$ and 
$N\in_B\!\!\BMod_C$ with values in $\cC$.  Then there exists $\Phi$ presenting  a relative tensor product of $M$ and $N$.  Vice versa, any 
extension $\Phi$
of $\phi$ presents a relative tensor product of $M$ with $N$if and only if the following conditions are fulfilled.
\begin{itemize}
\item $\Phi$ carries the maps $A_{02}\to A_{01}$ and
$A_{22}\to A_{11}$ to $q$-cocartesian arrows in $\cC$.
\item The functor $\Phi$ induces an equivalence
$$|\BAR(\phi)|\to \Phi(k).$$
\end{itemize}
\end{prp}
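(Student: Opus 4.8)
The plan is to identify the relative tensor product functor $\RT$ of~(\ref{eq:relativetensor}) with the operadic left Kan extension $i_{2!}$ along $i_2:\TENS_2\to\TENS_\succ$, and then to unwind the characterization of operadic left Kan extensions recalled in~\ref{ss:OLKE} color by color. The existence of an extension $\Phi$ presenting the relative tensor product is then immediate: it is $\Phi=\RT(\phi)=i_{2!}(\phi)$, which exists precisely because $\cC$ has geometric realizations compatible with the tensor product, the hypothesis under which $\RT$ was defined after~(\ref{eq:relativetensor}). For the converse, since $\Phi$ is by assumption an extension of $\phi$ (the triangle in~(\ref{eq:tensorproduct}) commutes, so $i_2^*\Phi=\phi$), it presents the relative tensor product if and only if the canonical comparison $\RT(\phi)\to\Phi$ is an equivalence, that is, if and only if $\Phi$ is an operadic left Kan extension of $\phi$. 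By~\ref{ss:OLKE} this amounts to requiring, for every color $q$ of $\TENS_\succ$, that the diagram $\alpha_q^\triangleright:K_q^\triangleright\to\cC$ be an operadic colimit diagram, where $K_q=\TENS_2\times_{\TENS_\succ}(\TENS_\succ)^\act_{/q}$.

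First I would dispose of the colors of $\TENS_\succ$ lying in $\TENS_2$, namely $a,m,b,n,c$. For each such $q$ every active arrow of $\TENS_\succ$ into $q$ has its source in the fiber $\TENS_2$ over $0\in[1]$ (one cannot return from $1$ to $0$), and since the operations of $\TENS_2$ into $q$ of each arity are unique, the identity $q\to q$ is a terminal object of $K_q$. Hence $\alpha_q^\triangleright$ is automatically an operadic colimit diagram and the condition is vacuous, merely recording that $\Phi$ restricts to $\phi$. The monoid colors $a'$ and $c'$ of $\TENS_1$ are almost as easy: by the description of $\TENS$ in~\ref{sss:TENS} the only active arrows into $a'$ from $\TENS_2$ factor through the structure map $a=A_{0,2}\to A_{0,1}=a'$, so that $(a,\,a\to a')$ is terminal in $K_{a'}$. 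The operadic colimit condition at $a'$ therefore says exactly that $\Phi$ carries $A_{02}\to A_{01}$ to a $q$-cocartesian arrow; symmetrically one obtains $q$-cocartesianness of $A_{22}\to A_{11}$ at $c'$. This is the first bullet.

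The main work, and the principal obstacle, is the bimodule color $k$ of $\TENS_1$, where $K_k$ has no terminal object: its objects are the active arrows $a^{\otimes p}\otimes m\otimes b^{\otimes j}\otimes n\otimes c^{\otimes r}\to k$ assembled from the bilinear map $m\times n\to k$ of~\ref{sss:tens-succ} together with the actions and the structure maps $a\to a'$, $c\to c'$. I would use the functor $u:\Delta^\op\to\TENS_2$, $[j]\mapsto mb^jn$, of~\ref{sss:bar}, which lifts canonically to a functor $\Delta^\op\to K_k$, and show it is cofinal; granting the first bullet so that the $a$- and $c$-factors are governed by the cocartesian structure maps, the operadic colimit over $K_k$ is thereby computed as the geometric realization of the cocartesian shift of $\phi\circ u$, that is as $|\BAR(\phi)|$. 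Consequently $\alpha_k^\triangleright$ is an operadic colimit diagram if and only if $|\BAR(\phi)|\to\Phi(k)$ is an equivalence, which is the second bullet. Rather than verifying the cofinality and the cocartesian-shift bookkeeping by hand, I would deduce this last equivalence directly from Lurie's computation of the relative tensor product as a two-sided bar construction, \cite{L.HA}, 4.4.2.8, of which the present statement is a reformulation; the only point to check is that our operad $\TENS_\succ$ and our functor $u$ agree with Lurie's, and this is built into the definitions of~\ref{sss:tens-succ} and~\ref{sss:bar}.
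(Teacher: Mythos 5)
Your proposal is correct and follows the route the paper itself indicates: the paper offers no proof of this proposition beyond declaring it a reformulation of \cite{L.HA}, 4.4.2.8, and your reduction to the pointwise operadic-left-Kan-extension criterion of \ref{ss:OLKE}, with the color-by-color analysis (terminal objects of $K_q$ at the $\TENS_2$-colors and at $a'$, $c'$ yielding the cocartesian condition, and cofinality of the lift of $u$ into $K_k$ yielding the bar-construction condition), is exactly the content of that reformulation. The one step you defer --- cofinality of $\Delta^\op\to K_k$ --- is precisely what Lurie's 4.4.2.8 supplies, so deferring it is legitimate and matches the paper's own level of detail.
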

 
\subsubsection{Associativity}
\label{sss:asso}

To formulate associativity, we need to use
Proposition~\ref{prp:S-family} applied to the family 
$\phi:S\to\Delta^\op$ defined by the commutative square
\begin{equation}
\xymatrix{
&{[1]}\ar^{\partial^1}[d]\ar^{\partial^1}[r] &{[2]}\ar^{\partial^2}[d] \\
&{[2]}\ar^{\partial^1}[r] &{[3]}
}
\end{equation}
in $\Delta$, and a $\TENS_\phi$-monoidal category $\cC$ 
with geometric realizations, see~\cite{L.HA}, 4.4.3.14. 

\subsection{Variants}
The tensor product of bimodules (\ref{eq:relativetensor2}) commutes with the functor forgetting the left $A$-module structure
and the right $C$-module structure.

We would like to formulate this observation as follows.
Let $\TEN_\succ$ be the full suboperad of $\TENS_\succ$ spanned by the colors $a,b,a', m,n,k\in[\TENS_\succ]$. There is an obvious
embedding $i:\TEN_\succ\to\TENS_\succ$ and the functor
$i^*:\Alg_{\TENS_\succ}\to\Alg_{\TEN_\succ}$ forgets the 
right module structure on the bimodules described by the colors $n$ and $k$.

Similarly, it makes sense to describe a yet smaller suboperad 
$\EN_\succ$ spanned by the colors $b,m,n,k\in[\TENS_\succ]$.
We denote $j:\EN_\succ\to\TENS_\succ$ the obvious embedding 
that forgets both the right module structure on bimodules described by $n,k$ and the left module structure on bimodules described by
$m,k$.

We define $\TEN_2$ and $\EN_2$ as for $\TENS_\succ$; this yields
the functors $i^*_2$ and the left adjoints $\RT$ exactly as for 
$\TENS_\succ$-monoidal categories. One has

\begin{prp}
The forgetful functors $i^*$ and $j^*$ commute with the relative tensor product.
\end{prp}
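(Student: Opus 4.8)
The plan is to deduce the statement from the explicit colimit description of the relative tensor product in Proposition~\ref{prp:colimitdescription}, using that the analogous description holds verbatim for $\TEN_\succ$ and $\EN_\succ$ (the machinery of \ref{ss:relativetensor} and \ref{sss:tens-succ} was set up to transfer unchanged). The one decisive observation is that the two-sided bar construction of \ref{sss:bar} factors through the smallest suboperad $\EN_\succ$: the functor $u:\Delta^\op\to\TENS_2$ sending $[k]$ to $mb^kn$, together with its extension $u_+:\Delta_+^\op\to\TENS_\succ$ hitting $k\in\TENS_1$, involves only the colors $b,m,n,k$ and the structure maps $mb\to m$, $bb\to b$, $bn\to n$ and $1\to b$. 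None of these records the left $A$-action on $m$ or the right $C$-action on $n$, so $u_+$ factors through $j:\EN_\succ\to\TENS_\succ$, and a fortiori through $i:\TEN_\succ\to\TENS_\succ$.

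First I would draw the consequence for the bar construction. Since $u$ already factors through $\EN_2\subset\TENS_2$, for any $\phi\in\Alg_{\TENS_2/\TENS_\succ}(\cC)$ the composite $\phi\circ u$ depends only on $j^*\phi$, so $\BAR(\phi)=\overrightarrow{\phi\circ u}$ is literally $\BAR(j^*\phi)$, and likewise $\BAR(\phi)=\BAR(i^*\phi)$. Next I would invoke the characterization of Proposition~\ref{prp:colimitdescription}: an extension $\Phi$ presents the relative tensor product iff (a) $\Phi$ carries the algebra maps $A_{02}\to A_{01}$ and $A_{22}\to A_{11}$ to $q$-cocartesian arrows, and (b) the induced map $|\BAR(\phi)|\to\Phi(k)$ is an equivalence. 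Condition (b) involves only the colors $b,m,n,k$ and is therefore preserved verbatim by $i^*$ and $j^*$. Condition (a) involves the colors $a,a'$ (through $A_{02}\to A_{01}$) and $c,c'$ (through $A_{22}\to A_{11}$); passing to $\TEN_\succ$ deletes $c,c'$ and hence drops the second requirement, while passing to $\EN_\succ$ deletes $a,a',c,c'$ and drops both. In each case the characterizing conditions for $i^*\Phi$ (resp.\ $j^*\Phi$) in the corresponding analogue of Proposition~\ref{prp:colimitdescription} are exactly the subset of the conditions on $\Phi$ that survive the restriction. Thus, if $\Phi$ presents the relative tensor product of $\phi$, then $i^*\Phi$ presents that of $i^*\phi$ and $j^*\Phi$ that of $j^*\phi$, which gives the two desired natural equivalences $i^*\circ\RT\simeq\RT\circ i^*$ and $j^*\circ\RT\simeq\RT\circ j^*$.

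The point requiring care, and the main obstacle, is the compatibility of the geometric realization computing the $k$-component with the forgetful functors. The value $\Phi(k)=|\BAR(\phi)|$ is a geometric realization taken in the fiber $\cC_k$ carrying the full bimodule structure, whereas $i^*\Phi(k)$ and $j^*\Phi(k)$ record less structure on the same underlying object; I must verify that the relevant forgetful functors between these fibers preserve this realization so that the equivalence (b) is genuinely transported. This holds because the forgetful functors in question forget module structures over the algebras $A$ and $C$, and such functors are conservative and preserve geometric realizations by \cite{L.HA}, 3.2.3.1 (and preserve all colimits by \cite{L.HA}, 4.2.3.5). Consequently $|\BAR(i^*\phi)|$ and $|\BAR(j^*\phi)|$ are computed by the very same colimit as $|\BAR(\phi)|$, and the two squares expressing that $i^*$ and $j^*$ commute with $\RT$ commute up to the canonical natural equivalence.
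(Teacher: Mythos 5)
Your proof is correct and follows essentially the same route as the paper: the whole argument rests on the observation that $u_+$ factors through $\TEN_\succ$ (indeed through $\EN_\succ$), so the two-sided bar construction computing $\RT$ is literally the same diagram in all three setups, and the characterization of Proposition~\ref{prp:colimitdescription} transfers. Your closing worry about forgetful functors preserving the geometric realization is superfluous rather than wrong: since $i^*$ and $j^*$ are restrictions along suboperad inclusions that contain the color $k$, the fiber $\cC_k$ and hence the object $|\BAR(\phi)|$ are unchanged, so there is no nontrivial forgetful functor between fibers to worry about.
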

\begin{proof}
The commutative square
\begin{equation}
\xymatrix{
&\Alg_{\TENS_\succ}(\cC)\ar^{i^*}[d]\ar^{i_2^*}[r]
&\Alg_{\TENS_2/\TENS_\succ}(\cC)\ar^{i^*}[d]\\
&\Alg_{\TEN_\succ/\TENS_\succ}(\cC)\ar^{i_2^*}[r]
&\Alg_{\TEN_2/\TENS_\succ}(\cC)
}
\end{equation}
defines a morphism of functors 
$$ \RT\circ i^*\to i^*\circ\RT.$$
To prove that this functor is an equivalence, we use the description 
of $\RT$ in terms of the two-sided bar construction. The functor
$u_+:\Delta^\op\to\TENS_\succ$ factors through 
$i:\TEN_\succ\to\TENS_\succ$, so the bar construction used to calculate $\RT$ as a colimit, is the same for both setups.

The version for $j:\EN_\succ\to\TENS_\succ$ is proven in the same way.
\end{proof}

\subsection{Reduction}
\label{ss:reduction}
An $A$-$B$-bimodule  in $\cC$ can be 
equivalently described as a left $A$-module in the category 
$\RMod_B(\cC)$. 

We present below a similar transformation of $\TENS_\succ$-monoidal categories compatible with the formation of the weighted colimit.

The construction is based on the notion of bilinear map of operads
and their tensor product as presented in \cite{H.EY}, 2.10.

\subsubsection{}
We define a map $p:\TENS_\succ\to\BM^\otimes$ as the obvious map
carrying the colors
$a,a',b,m$ to $a\in[\BM]$, $n,k$ to $m\in[\BM]$ and $c,c'$
to $b\in[\BM]$.
We have $\TEN_\succ=\LM^\otimes\times_{\BM^\otimes}\TENS_\succ$.  

One has a standard bilinear map
$\Pr:\LM^\otimes\times\RM^\otimes\to\BM^\otimes$ defined in~\cite{L.HA}, 4.3.2.1 and \cite{H.EY}, 2.10.7. There is
a lifting of $\Pr$ to a bilinear map
\begin{equation}
\label{eq:mu}
\mu:\TEN_\succ\times\RM^\otimes\to\TENS_\succ
\end{equation}
uniquely defined by its action on the colors.

\begin{itemize}
\item $\mu(*,m)=*$ where $*$ is any color of $\TEN_\succ$.
\item $\mu(n,b)=c,\ \mu(k,b)=c'$.
\end{itemize}

\subsubsection{}
\label{sss:red-1}
Let $\cC$ be a $\TENS_\succ$-operad. Following a general pattern~\cite{H.EY}, 2.10.1, we define a  $\TEN_\succ$-operad 
$\cC^\red:=\Alg^{\mu}_{\RM/\TENS_\succ}(\cC)$ as the one
representing the functor
$$
K\in\Cat_{/\TEN_\succ}\mapsto\Map_{\Cat^+_{/\TENS_\succ^\natural}}
(K^\flat\times\RM^\natural,\cC^\natural).
$$
We call $\cC^\red$ the reduction of $\cC$. 

Here is a more convenient description of $\cC^\red$ in the case 
when $\cC$ is a $\TENS_\succ$-monoidal category. In this case 
$\cC$ is classified by a lax cartesian structure
$$ \wt\cC:\TENS_\succ\to\Cat.$$
Composing it with $\mu$, we get a functor
$$ \wt\cC\circ\mu:\TEN_\succ\times\RM^\otimes\to\Cat,$$
defining
\begin{equation}
\label{eq:olC}
\ol\cC:\TEN_\succ\to\Fun^\lax(\RM^\otimes,\Cat),
\end{equation}
that is, a functor with the values in $\RM$-monoidal categories.
Composing it with the functor $\Alg_\RM$, we get a functor
$\wt\cC^\red:\TEN_\succ\to\Cat$ classifying $\cC^\red$.

Here is a more detailed information about the functor $\ol\cC$.
For $x=a,a',m,b$, $\ol\cC(x)$ is the $\RM$-monoidal category
$(\cC_x,[0])$ describing the trivial action of the trivial monoidal category on $\cC_x$. Thus, one has
$\cC^\red_x=\cC_x$ for these values of $x$. Furthermore, 
$\ol\cC(n)=(\cC_n,\cC_c)^\otimes$ and
$\ol\cC(k)=(\cC_n,\cC_{c'})^\otimes$, so that  
$\cC^\red_n=\Alg_\RM(\cC_n,\cC_c)$ and 
$\cC^\red_k=\Alg_\RM(\cC_k,\cC_{c'})$.

The standard embedding $i:\TEN_\succ\to\TENS_\succ$ identifies
the $m$-component of $\ol\cC$ with $i^*(\cC)$.
 
This defines a functor
$G:\cC^\red\to i^*(\cC)$ forgetting the right module structure
 on the components $\cC^\red_n,\cC^\red_k$.

The restriction with respect to $\mu$ (\ref{eq:mu})  defines a natural map
\begin{equation}
\label{eq:reduction}
\theta:\Alg_{\TENS_\succ}(\cC)\to\Alg_{\TEN_\succ}(\cC^\red),
\end{equation}
whose composition with the map induced by $G$
is the obvious restriction
\begin{equation}
\label{eq:res}
\Alg_{\TENS_\succ}(\cC)\to\Alg_{\TEN_\succ/\TENS_\succ}(\cC).
\end{equation}
We believe that the map (\ref{eq:reduction}) is an equivalence,
that is that $\mu$ presents $\TENS_\succ$ as a tensor product.

We will actually verify a somewhat weaker statement 
Proposition~\ref{prp:rt-reduction} that will be used in Section~\ref{sec:wc}.

\begin{lem}
\label{lem:tens2appr}
The bilinear map
$$\mu_2:\TEN_2\times\RM^\otimes\to\TENS_2$$
obtained by restriction of $\mu$, presents 
$\TENS_2$ as a tensor product of $\TEN_2$ with $\BM^\otimes$.
\end{lem}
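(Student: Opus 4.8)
The plan is to deduce the statement from Lurie's identification $\BM^\otimes\simeq\LM^\otimes\otimes\RM^\otimes$ (the bilinear map $\Pr$, see~\cite{L.HA}, 4.3.2) together with the fact that tensoring with a fixed operad preserves colimits of operads. Recall from \ref{sss:TENS}--\ref{sss:tens-succ} that $\TENS_2=\BM^\otimes\sqcup^{\Ass^\otimes}\BM^\otimes$, the two copies of $\BM^\otimes$ carrying the bimodules $M\in{}_A\BMod_B$ and $N\in{}_B\BMod_C$ and glued along the middle algebra $B$. Deleting the color $c$ and the right $C$-action on $N$, one obtains the analogous presentation of the reduced operad, $\TEN_2=\BM^\otimes\sqcup^{\Ass^\otimes}\LM^\otimes$, where the first summand is the full bimodule operad on the colors $a,m,b$ and the second is the left-module operad $\LM^\otimes$ on the colors $b,n$, again glued along $b$.

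First I would record the behaviour of $\mu_2$ on these two summands. By the color formulas $\mu(x,m)=x$ and $\mu(n,b)=c$ of~\ref{eq:mu}, the restriction of $\mu_2$ to the first summand $\BM^\otimes$ and to the gluing $\Ass^\otimes$ is $\RM$-trivial: both colors of $\RM^\otimes$ act as the identity on $a,m,b$, so there $\mu_2$ is simply the projection to the first factor. On the second summand, by contrast, $\mu_2$ restricts precisely to the standard bilinear map $\Pr:\LM^\otimes\times\RM^\otimes\to\BM^\otimes$, the module color $n$ paired with the module color of $\RM^\otimes$ giving the bimodule color $n$ of $\BM^\otimes$, and $n$ paired with the algebra color of $\RM^\otimes$ giving the new right algebra $c$.

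Next I would invoke that $-\otimes\RM^\otimes$ is a left adjoint for the tensor product of~\cite{H.EY}, 2.10 (its right adjoint being $\Alg_\RM(-)$), and hence commutes with the pushout presenting $\TEN_2$. Combining this with the two local computations above gives
\[
\TEN_2\otimes\RM^\otimes\;\simeq\;\BM^\otimes\sqcup^{\Ass^\otimes}\bigl(\LM^\otimes\otimes\RM^\otimes\bigr)\;\simeq\;\BM^\otimes\sqcup^{\Ass^\otimes}\BM^\otimes\;=\;\TENS_2,
\]
the middle equivalence being $\Pr$ on the second summand, and one checks that this identification is the one induced by $\mu_2$.

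The main obstacle will be making the $\RM$-triviality on the $\BM$-and-$\Ass$ part precise and compatible with the colimit-preservation used afterwards: one must verify that, along the shared color $b$, adjoining a right module structure via $-\otimes\RM^\otimes$ leaves the middle algebra $B$ undisturbed (equivalently, that the algebra color $b$ is genuinely $\RM$-fixed), so that the two summands glue in $\TEN_2\otimes\RM^\otimes$ exactly as they do in $\TENS_2$. A clean way to settle this, which I would use to confirm the computation, is to unwind both sides on algebras: for any target $\cC$ one identifies a $\TEN_2$-algebra in the reduction $\cC^\red$ with a tuple $(A,M,B,(C,N))$, where $(C,N)\in\Alg_\RM$ records $C$ together with a right $C$-module $N$ carrying a compatible left $B$-action, that is, precisely a $\TENS_2$-algebra $(A,M,B,N,C)$; naturality of this identification in $\cC$, together with the comparison map $\theta$ of~\ref{eq:reduction}, then yields the asserted equivalence.
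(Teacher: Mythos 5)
Your strategy is not the paper's, and it has a genuine gap at its central step. The displayed chain $\TEN_2\otimes\RM^\otimes\simeq\BM^\otimes\sqcup^{\Ass^\otimes}(\LM^\otimes\otimes\RM^\otimes)\simeq\TENS_2$ is obtained by distributing $-\otimes\RM^\otimes$ over the pushout $\TEN_2=\BM^\otimes\sqcup^{\Ass^\otimes}\LM^\otimes$ and then replacing $\BM^\otimes\otimes\RM^\otimes$ by $\BM^\otimes$ and $\Ass^\otimes\otimes\RM^\otimes$ by $\Ass^\otimes$ on the grounds that $\mu_2$ is ``$\RM$-trivial'' on the colors $a,m,b$. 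But if you invoke colimit-preservation of the tensor product of operads, the summands you obtain are the \emph{universal} tensor products $\BM^\otimes\otimes\RM^\otimes$ and $\Ass^\otimes\otimes\RM^\otimes$, and these are not $\BM^\otimes$ and $\Ass^\otimes$: for a symmetric monoidal $\cC$ one has $\Alg_{\Ass^\otimes\otimes\RM^\otimes}(\cC)=\Alg_\Ass(\Alg_\RM(\cC))$, whose objects carry an extra associative algebra and a compatible right module structure. The $\RM$-triviality you use is a property of the particular bilinear map $\mu_2$, not of the universal one, so it cannot be fed into a colimit argument for $\otimes$. The same conflation affects your starting point: ``$\BM^\otimes\simeq\LM^\otimes\otimes\RM^\otimes$'' is not an absolute identity of operads; what is true, and what ``presents as a tensor product'' means in \cite{H.EY}, 2.10, is the relative statement that $\Pr$ induces $\Alg_{\BM}(\cX)\simeq\Alg_\LM(\Alg^{\Pr}_{\RM/\BM}(\cX))$ for every $\BM$-operad $\cX$. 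Indeed $\TEN_2\otimes\RM^\otimes\not\simeq\TENS_2$ in the absolute sense, so an argument purporting to prove that identity proves too much. Your closing paragraph, which unwinds both sides on algebras and identifies a $\TEN_2$-algebra in $\cC^\red$ with a tuple $(A,M,B,(C,N))$, is essentially a restatement of the claim to be proven (and only for monoidal targets, whereas the universal property quantifies over all $\TENS_2$-operads, as an equivalence of categories and not just a matching of objects); it is not carried out and does not repair the computation.

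For comparison, the paper's proof goes in a different direction and is a one-liner: it composes $\mu_2$ with the strong approximations $\RM\to\RM^\otimes$ and $\ten_2\to\TEN_2$, where $\ten_2=\BM\sqcup^{\Ass}\LM$, observes that the resulting bilinear map $\ten_2\times\RM\to\TENS_2$ is a strong approximation of $\TENS_2$, and invokes the compatibility of bilinear maps and tensor products with strong approximations from \cite{H.EY}, 2.9--2.10. If you want a concrete argument, that is the route to take: the combinatorial check happens at the level of the approximations, where no absolute tensor product of operads ever needs to be computed.
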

\begin{proof}
We compose $\mu_2$ with the standard strong approximations
$\RM\to\RM^\otimes$, $\ten_2\to\TEN_2$ as described in 
\cite{H.EY}, 2.9, with $\ten_2=\BM\sqcup^{\Ass}\LM$.
We get a bilinear map $\ten_2\times\RM\to\TENS_2$ that is 
easily seen to be a strong approximation.
\end{proof}

\begin{prp}
\label{prp:rt-reduction}
Let $\cC$ be a $\TENS_\succ$-monoidal category with colimits. 
Then $\mu$ induces a commutative diagram
\begin{equation}
\label{eq:reduction-prp}
\xymatrix{
&{\Alg_{\TENS_2/\TENS_\succ}(\cC)}\ar^(0.5){\RT}[rr] 
\ar_{\theta_2}^\sim[d]&&{\Alg_{\TENS_\succ}(\cC)}\ar^\theta[d]\\
&{\Alg_{\TEN_2/\TEN_\succ}(\cC^\red)}\ar^(0.55){\RT^\red}[rr]&&{\Alg_{\TEN_\succ}(\cC^\red)},
}
\end{equation}
where $\RT^\red$ is the relative tensor product defined for
the $\TEN_\succ$-monoidal category $\cC^\red$.
\end{prp}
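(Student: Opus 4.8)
The plan is to verify that the square \eqref{eq:reduction-prp} commutes by comparing the two relative tensor products through their explicit descriptions as two-sided bar constructions, reducing everything to the approximation lemma \ref{lem:tens2appr}. First I would establish that the left vertical arrow $\theta_2$ is indeed an equivalence. This is the restriction of $\theta$ to $\TENS_2$-algebras, and by Lemma~\ref{lem:tens2appr} the bilinear map $\mu_2:\TEN_2\times\RM^\otimes\to\TENS_2$ presents $\TENS_2$ as a tensor product of $\TEN_2$ with $\BM^\otimes$. Consequently, restriction along $\mu_2$ identifies $\TENS_2$-algebras in $\cC$ with $\TEN_2$-algebras in the reduction $\cC^\red$, exactly as in the general pattern of~\cite{H.EY}, 2.10; this gives the vertical equivalence $\theta_2$ on the left, parallel to the definition \eqref{eq:reduction} of $\theta$ on the right.

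Next I would show that both horizontal relative tensor products are computed by \emph{the same} underlying simplicial object. Recall from~\ref{sss:bar} that $\RT$ is defined via the left adjoint to $i_2^*$ and computed, by Proposition~\ref{prp:colimitdescription}, as the geometric realization of the two-sided bar construction $\BAR(\phi)=\overrightarrow{\phi\circ u}$, where $u:\Delta^\op\to\TENS_2$ carries $[n]$ to $mb^nc$. The key point is that $u$ factors through the embedding $\TEN_\succ\to\TENS_\succ$: the colors $m,b,n$ and the structure maps $mb\to m$, $bb\to b$, $bn\to n$, $1\to b$ all lie in $\TEN_\succ$, and this is precisely the observation exploited in the proof that $i^*$ commutes with $\RT$ in the preceding Variants subsection. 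Therefore the bar construction defining $\RT^\red$ on $\cC^\red$ is obtained from the bar construction defining $\RT$ on $\cC$ by applying the reduction functor levelwise, and the two geometric realizations agree under $\theta$ and $\theta_2$.

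Assembling these, I would trace an object $\phi\in\Alg_{\TENS_2/\TENS_\succ}(\cC)$ around both paths: along the top and then down, one forms $\RT(\phi)$ via $|\BAR(\phi)|$ and then reduces via $\theta$; along the left and then across, one first reduces via $\theta_2$ and then forms $\RT^\red$ via $|\BAR(\theta_2(\phi))|$. Since $\theta_2$ is compatible with the functor $u$ landing in $\TEN_\succ$ and with the cocartesian shift defining $\overrightarrow{\phantom{x}}$, the two bar constructions are identified objectwise in $\Delta^\op$, and since geometric realization is a colimit preserved under the reduction (the right $\RM$-actions on the $n,k$-components are tensored-compatibly with colimits), the two realizations coincide. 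The coherence of this identification is packaged by Proposition~\ref{prp:S-family}, applied to the family over the relevant arrow, which characterizes $\RT$ and $\RT^\red$ uniformly as operadic left Kan extensions preserved by the $\mu$-restriction.

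The main obstacle I anticipate is not the objectwise matching of the bar constructions, which is essentially formal once $u$ is seen to factor through $\TEN_\succ$, but rather establishing the commutativity of the square \emph{coherently} rather than merely on objects. Producing the natural equivalence of functors $\theta\circ\RT\simeq\RT^\red\circ\theta_2$, as opposed to a pointwise equivalence, requires that the left adjoints $\RT$ and $\RT^\red$ be compared through a map of adjunctions; the clean way to do this is to exhibit the whole square as induced by a single map of $\TENS_\succ$-families via Proposition~\ref{prp:S-family}(3), so that both horizontal arrows arise as cocartesian pushforwards preserved by the geometric-realization–preserving functor underlying $\mu$. Verifying that the $\mu$-restriction indeed preserves geometric realizations, so that Proposition~\ref{prp:S-family}(3) applies, is where the hypothesis that $\cC$ has colimits and that the tensor structure commutes with them will be used in an essential way.
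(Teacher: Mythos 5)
Your overall strategy---identify $\theta_2$ via Lemma~\ref{lem:tens2appr}, compute both relative tensor products by the two-sided bar construction, and exploit that $u_+$ factors through $\TEN_\succ\to\TENS_\succ$---is the same as the paper's. However, two steps in your plan need repair. The coherence issue you flag at the end is real, but your proposed fix does not apply: Proposition~\ref{prp:S-family}(3) concerns a $\TENS_\phi$-monoidal functor between two categories over the \emph{same} family, whereas $\cC$ and $\cC^\red$ live over different operads ($\TENS_\succ$ and $\TEN_\succ$), $\mu$ is a bilinear map of operads rather than a functor of monoidal categories, and $\theta$ is a functor on algebra categories, not an operadic functor on the underlying categories. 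The paper obtains the comparison map for free: the square of right adjoints $\theta_2\circ i_2^*=i^{\red*}_2\circ\theta$ commutes, so the adjunctions induce a canonical morphism of functors $\RT^\red\circ\theta_2\to\theta\circ\RT$, and it only remains to check that this is an objectwise equivalence.

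For that objectwise check, ``the two realizations coincide'' needs more than preservation of geometric realizations. What one actually knows is that $G\circ\overrightarrow{\theta(\Phi)\circ u_+}$ is a colimit diagram in $\cC_k$, because $G\circ\theta$ is the plain restriction (\ref{eq:res}) and $u_+$ factors through $\TEN_\succ$. To conclude that $\overrightarrow{\theta(\Phi)\circ u_+}$ is itself a colimit diagram in $\cC^\red_k=\Alg_\RM(\cC_k,\cC_{c'})$, one needs the forgetful functor $G$ to \emph{create} geometric realizations (\cite{L.HA}, 3.2.3.1), i.e.\ to be conservative and to detect these colimits, not merely to preserve them. Your phrase about the $\RM$-actions being ``tensored-compatibly with colimits'' gestures at this, but the creation statement is the load-bearing ingredient; with it, the characterization of the relative tensor product in Proposition~\ref{prp:colimitdescription} applies in $\cC^\red$ and closes the argument.
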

\begin{proof}
If $\cC$ is a $\TENS_\succ$-monoidal category with colimits,
$\cC^\red$ is a $\TEN_\succ$-monoidal category with colimits.
This implies that $\RT^\red$ is defined as the functor left adjoint to the restriction
$i^{\red*}_2:\Alg_{\TEN_\succ}(\cC^\red)\to\Alg_{\TEN_2/\TEN_\succ}(\cC^\red)$.
The equivalence $\theta_2$ is defined by the universal property of 
$\cC^\red=\Alg^\mu_{\RM/\TENS_\succ}(\cC)$ and Lemma~\ref{lem:tens2appr}.
The equivalence $\theta_2\circ i^*_2=i^{\red*}_2\circ\theta$
induces a morphism of functors
\begin{equation}
\label{eq:rttheta}
 \RT^\red\circ\theta_2\to\theta\circ\RT.
 \end{equation}
We claim that this morphism is an equivalence.

Let $\phi\in\Alg_{\TENS_2/\TENS_\succ}(\cC)$ be given by a pair
of bimodules $M\in_A\!\!\BMod_B(\cC_m),\ N\in_B\!\!\BMod_C(\cC_n)$
and let $\Phi=\RT(\phi)$. By~\ref{prp:colimitdescription}, the composition $\Phi\circ u_+:\Delta^\op_+\to\cC$ is an operadic colimit
diagram. This is equivalent to saying that the cocartesian shift
$\Sh(\Phi\circ u_+):\Delta^\op_+\to\cC_k$ is a colimit diagram.

The map $u_+:\Delta^\op_+\to\TENS_\succ$ factors
through $i:\TEN_\succ\to\TENS_\succ$.
By~\ref{prp:colimitdescription}, the claim of Proposition~\ref{prp:rt-reduction} will be proven once we
verify that $\theta(\Phi)\circ u_+$ is an operadic colimit 
diagram in $\cC^\red$, or, equivalently, that the cocartesian shift
$\Sh(\theta(\Phi)\circ u_+):\Delta^\op_+\to\cC^\red_k$
is a colimit diagram.

The composition $G\circ\Sh(\theta(\Phi)\circ u_+):\Delta^\op_+\to\cC_k$
is a  colimit diagram as the composition $G\circ\theta$ is the
restriction (\ref{eq:res}). According to \cite{L.HA}, 3.2.3.1,
$G$ creates colimits. This proves the proposition.
\end{proof}

\begin{rem}
\label{rem:red-c}
Let $C\in\Alg_\Ass(\cC_c)$ be the $c$-component of $\phi$.
We define a $\TEN_\succ$-monoidal subcategory $\cC^\red_C$  
of $\cC^\red$ as follows. The restriction 
$\Alg_\RM\to\Alg_{\Ass/\RM}$ applied to the functor 
$\ol\cC$ (\ref{eq:olC}) yields a morphism of functors
$$\wt\cC^\red\to\Alg_{\Ass/\RM}\circ\ol\cC.$$
The $\TEN_\succ$-monoidal subcategory $\cC^\red_C$
is defined as the fiber of this functor at the 
object of $\Alg_{\Ass/\RM}\circ\ol\cC$ determined by the cocartesian
arrow $C\to\phi_c(C)$, in the notation of \ref{sss:tens-succ}.
The functor $\theta(\Phi)\circ u_+:\Delta^\op_+\to\cC^\red$ 
canonically factors through $\cC^\red_C$. By \cite{L.HA}, 3.2.3.1,
the functor $\Delta^\op_+\to\cC^\red_C$ so defined is also
an operadic colimit diagram.

\end{rem}

\section{Bar resolutions for enriched presheaves}
\label{sec:bar}

The aim of this very technical section is to construct a certain
operadic colimit diagram, see~\ref{prp:barquiv}, used later in the
proof of the important result \ref{prp:olke}.

Given a monadic adjunction $\cC\rlarrows\LMod_A(\cC)$, any
$A$-module $M$ acquires a standard resolution $\BAR_\bullet(A,M)$
(sometimes called {\sl bar resolution}).
This is a  simplicial resolution of $M$ consisting of free $A$-modules.
If one forgets the $A$-module structure on $\BAR_\bullet(A,M)$, 
one will get a special case of the bar construction described 
in~\ref{sss:bar}. It should not surprise us that the $A$-module structure
that we have just forgotten, can be reconstructed from a monadic action.

Let $\cC=(\cC_a,\cC_m)$ be an $\LM$-monoidal category with colimits,
let $A$ be an associative algebra in $\cC_a$ and $M$ be an $A$-module in 
$\cC_m$. The pair $(A,M)$ is given by a map of operads
$\gamma:\LM\to\cC$. Its composition with a functor 
$u_+:\Delta^\op_+\to\LM$ (a variant of the $u_+$ defined in \ref{sss:bar}) defines an operadic colimit diagram; its 
cocartesian shift $\Sh(\gamma\circ u_+)$ is equivalent to $G(\BAR_\bullet(A,M))$,
where $\BAR(A,M)$ is the $G$-split simplicial objects defined by the monad $G\circ F$ on $\cC$ and  $G:\LMod_A(\cC_m)\to\cC_m$ is the forgetful functor.

In Subsection \ref{ss:barmodule} we explain how to reconstruct 
$\BAR_\bullet(A,M)$ from $\Sh(\gamma\circ u_+)$, in terms an 
action of the monad corresponding to $A$.

We use a similar reasoning to describe the bar resolution
for enriched presheaves in Subsection~\ref{ss:barpresheaves}.
Here we are able to say more than for general modules. In general, we have no chance to introduce the monad action on $\gamma\circ u_+$
instead of $\Sh(\gamma\circ u_+)$,
as the category $\LMod_A(\cC_m)$ is not left-tensored over $\cC_a$.

As for the enriched presheaves that are defined as $\LMod_{\cA^\op}(\Fun(X^\op,\cM))$, they have a left $\cM$-module structure. 
This allows us to encode the bar resolution for a presheaf 
$f\in P_\cM(\cA)$ into
an operadic colimit diagram
\begin{equation}
\label{eq:barpresheavesintro}
K^\triangleright\to(\cM,P_\cM(\cA))^\otimes,
\end{equation}
with an appropriate choice of a category $K$, see~\ref{lem:bar-pre}
for the precise formulation.

\subsection{Bar resolution of a module}
\label{ss:barmodule}

Let $\cO$ be an $\LM$-operad and let $\gamma:\LM\to\cO$ 
be an $\LM$-algebra in $\cO$ defined by a pair $(A,M)$,
where $A$ is an associative algebra in the planar operad
$\cO_a$ and $M\in\cO_m$ is a left $A$-module. 

A very special case of the two-sided bar construction~\ref{sss:bar}
gives the following simplicial resolution of a module.

We define the functor $u_+:\Delta_+^\op\to\LM$ by the formula
$$ 
u_+([n])=a^{n+1}m,\ 
n\geq -1,
$$
with the face maps defined by the multiplication in $a$ and by its action on $m$.
Note that the image of $u_+$ belongs to $\LM^\act$,
the active part of $\LM$.
\begin{lem}
\label{lem:bar-mod}
The composition 
$$\gamma\circ u_+:
\Delta^\op_+\to\cO
$$
is an operadic colimit diagram. In the case when $\cO$
is a monoidal $\LM$-category with geometric realizations,
it induces an equivalence
$|\Sh(\gamma\circ u)|\to M$ in 
$\cO_m$. 
\end{lem}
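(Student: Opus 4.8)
The plan is to recognize the augmented simplicial object $u_+:\Delta_+^\op\to\LM^\act$ as a \emph{split} augmented simplicial object and then to exploit the fact that split augmented simplicial objects are absolute colimit diagrams, preserved by arbitrary functors (\cite{L.T}, 6.1.3.16). Granting this, the first assertion is formal. Write $p:\cO\to\LM$ for the structure map, so that $\gamma$ is a section. For every $C\in\cO$ the composite
$$\Delta_+^\op\xrightarrow{\gamma\circ u_+}\cO^\act\xrightarrow{\oplus C}\cO^\act$$
is obtained from a split augmented simplicial object by applying the functors $\gamma$ and $\oplus C$, hence is again split and so an absolute colimit diagram; it lies over $(\oplus p(C))\circ u_+:\Delta_+^\op\to\LM$, which is split for the same reason and is therefore also a colimit diagram. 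Since both the total diagram and its projection to $\LM$ are colimit diagrams, with the projection preserving the colimit, the criterion for relative colimits \cite{L.T}, 4.3.1.5 shows that $(\oplus C)\circ\gamma\circ u_+$ is a $p$-colimit diagram. As this holds for all $C$, the diagram $\gamma\circ u_+$ is an operadic colimit diagram.

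It remains to produce the split structure on $u_+$, and I expect this to be the main (if routine) point. Recall that $u_+([n])=a^{n+1}m$, with the inner faces given by the multiplication $aa\to a$, the last face by the action $am\to m$, and the degeneracies by insertion of a unit $\one\to a$. The extra degeneracy $s_{-1}:u_+([n])\to u_+([n+1])$ is the morphism $a^{n+1}m\to a^{n+2}m$ that adjoins one further copy of $a$ at the front through the nullary operation $\emptyset\to a$ and is inert on the remaining colors. Because the unit corresponds to the active morphism $\langle 0\rangle\to\langle 1\rangle$ in $\Fin_*$, each $s_{-1}$ again lies in $\LM^\act$, consistent with the fact that $u_+$ takes values in the active part. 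The content is then to check that the collection $\{s_{-1}\}$ satisfies the simplicial identities making $u_+$ split; this is the usual combinatorial verification for the bar resolution, but carried out entirely inside the operad $\LM$ — one must confirm at the level of $\LM$, rather than in a concrete target category, that the extra degeneracies are genuine operad morphisms and that they interact correctly with the faces coming from multiplication and from the action.

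For the second assertion assume $\cO$ is a monoidal $\LM$-category with geometric realizations, so that $p$ is a cocartesian fibration compatible with the realizations. Passing from the operadic colimit diagram $\gamma\circ u_+$ to its cocartesian shift \ref{sss:obs2} over the constant diagram with value $m\in\LM$ replaces every active arrow by the corresponding tensor product and lands the whole diagram in the fibre $\cO_m$; by the comparison between operadic colimits and ordinary colimits in a monoidal category with the relevant colimits (\cite{L.HA}, 3.1.1.20), the resulting augmented simplicial object $\overrightarrow{\gamma\circ u_+}:\Delta_+^\op\to\cO_m$ is an honest colimit diagram, with cone value $M$. Its restriction to $\Delta^\op$ is by definition $\BAR(A,M)=\overrightarrow{\gamma\circ u}$, so the augmentation yields the asserted equivalence $|\BAR(A,M)|\to M$ in $\cO_m$.

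Finally, this recovers the promised interpretation as a degenerate two-sided bar construction: it is the specialization of \ref{sss:bar} in which the middle algebra is $A$, the left bimodule is the regular bimodule $A$, and the right module is $M$, so that the relative tensor product $A\otimes_A M$ returns $M$. Consequently one could alternatively deduce the monoidal statement directly from Proposition \ref{prp:colimitdescription} through this identification; I nonetheless favor the split-simplicial argument above, since it proves the stronger operad-level claim for an arbitrary $\LM$-operad $\cO$, where no colimits in $\cO_m$ are available.
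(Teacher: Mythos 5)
Your proof is correct, but it is organized quite differently from the paper's, whose entire argument is the citation ``this is a direct consequence of \cite{L.HA}, 4.4.2.5, 4.4.2.8, applied to the tensor product $A\otimes_AM=M$'' --- i.e.\ the paper treats $\gamma\circ u_+$ as the degenerate two-sided bar construction of \ref{sss:bar} computing $A\otimes_AM$ (exactly the identification you make in your closing paragraph) and delegates everything to Lurie. What you do instead is inline the mathematical content of those citations: the splitting of $u_+$ by unit insertion, the absoluteness of split augmented simplicial objects (\cite{L.T}, 6.1.3.16), and the passage from ``colimit in the total category plus colimit in the base'' to ``$p$-colimit'' via \cite{L.T}, 4.3.1.5. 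This buys a transparent proof of the first assertion for an arbitrary $\LM$-operad $\cO$, where no colimits are available and a literal appeal to relative tensor products would itself need unwinding; the paper's version buys brevity and keeps the lemma tied to the relative-tensor-product formalism of Section~\ref{sec:rtp} (Proposition~\ref{prp:colimitdescription}). The one step you leave unverified --- that the unit-insertion maps $a^{n+1}m\to a^{n+2}m$ assemble with the faces and degeneracies of $u_+$ into a functor on the relevant enlargement of $\Delta^\op_+$ --- is precisely the combinatorial content packaged in Lurie's cited lemmas; since $\LM$ is (the nerve of) an ordinary category this is a strict check of the simplicial identities and is indeed routine, but you should be aware that it is the irreducible work common to both arguments rather than a formality that disappears in your version.
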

\begin{proof}
This is a direct consequence of \cite{L.HA}, 4.4.2.5, 4.4.2.8, applied to the tensor product $A\otimes_AM=M$.
\end{proof}

\subsubsection{}
\label{sss:bar-elev}

The  functor 
$\Sh(\gamma\circ u_+)$ has a canonical lifting
to an augmented simplicial object in $\LMod_A(\cO_m)$ that we call
{\sl the bar resolution of $A$-module $M$} and denote 
$\BAR_\bullet(A,M)$.
In \ref{sss:bar-elev}---\ref{sss:bar-elev-end}   we show how this canonical lifting can be described in terms of a monadic action.

\subsubsection{An action of $\cO_a$ on $\Fun(K,\cO_m)$}
\label{sss:klm}

An $\LM$-monoidal category $\cO=(\cO_a,\cO_m)$ encodes an action of 
a monoidal category $\cO_a$ on a category $\cO_m$, or, in other 
words, a monoidal functor $\cO_a\to\End_{\Cat}(\cO_m)$. 

Fix $K\in\Cat$. The functor $C\mapsto\Fun(K,C)$ defines a monoidal
functor $\End(C)\to\End(\Fun(K,C))$. Thus,  any $\LM$-monoidal category $\cO$ defines a monoidal functor $\cO_a\to\End(\cO_m)\to
\End(\Fun(K,\cO_m))$, that is an $\LM$-monoidal category 
$(\cO_a,\Fun(K,\cO_m))$.

We wish to  present two more  constructions of the $\LM$-monoidal
category \newline $(\cO_a,\Fun(K,\cO_m))$. 

1. Applying the functor $\Fun^\LM(K,\_)$ defined in \cite{H.EY},
6.1.6, we get an $\LM$-monoidal category with the $a$-component
$\Fun^\LM(K,\cO)_a=\Fun(K,\cO_a)$ and \newline
$\Fun^\LM(K,\cO)_m=\Fun(K,\cO_m)$. The forgetful functor
$\Alg_\LM(\Cat)\to\Alg_\Ass(\Cat)$ being a cartesian fibration,
the $\LM$-monoidal category $(\cO_a,\Fun(K,\cO_m))$
described above, is equivalent to  
$i^*(\Fun^\LM(K,\cO))$, where $i:\cO_a\to\Fun(K,\cO_a)$ is induced by the
map $K\to[0]$.

2. Let $K\in\Cat$. Denote by $K^\LM$  the $\LM$-monoidal
category describing the action of the terminal monoidal category 
$[0]$ on $K$.  We claim that $\Funop_\LM(K^\LM,\cO)$ gives yet 
another 
presentation of $(\cO_a,\Fun(K,\cO_m))$.

We start with the map of cocartesian fibrations over $\LM$
\begin{equation}
\label{eq:toklm}
q:K\times\LM\to K^\LM
\end{equation}
constructed as an obvious natural transformation of the classifying 
functors $\LM\to\Cat$. The map $q$ induces
\begin{equation}
\label{eq:toklm2}
K\times\Fun_\LM(K^\LM,\cO)\to K^\LM\times_\LM\Fun_\LM(K^\LM,\cO)\to\cO, 
\end{equation}
and, therefore, an $\LM$-operad map
$$Q:\Fun_\LM(K^\LM,\cO)\to\Fun^\LM(K,\cO).$$
The monoidal component of $Q$ is the monoidal functor $i:\cO_a\to\Fun(K,\cO_a)$ mentioned above. Therefore, the map $Q$ factors through
$$Q':\Fun_\LM(K^\LM,\cO)\to i^*(\Fun^\LM(K,\cO)).$$
One can easily see that $Q'$ is an equivalence.

\begin{crl}
\label{crl:monadaction}
Let $\cO=(\cO_a,\cO_m)$ be $\LM$-monoidal category, $A\in\Alg_\Ass(\cO_a)$, $f:K\to\cO_m$ a functor. Let
$$
F:\cO_m\rlarrows\LMod_A(\cO_m):G
$$
be the adjunction.
There is an equivalence between decompositions $f=G\circ f'$, $f':K\to\LMod_A(\cO_m)$,
and $A$-module structures on $f$.
\end{crl}
\qed

\subsubsection{}
\label{sss:bar-elev-end}
We apply Corollary~\ref{crl:monadaction} to the functor 
$\Sh(\gamma\circ u_+):\Delta^\op_+\to\cO_m$.

According to~\ref{sss:klm} and \ref{crl:monadaction}, we have to produce
a map of operads
$\LM\to\Funop_\LM((\Delta^\op_+)^\LM,\cO)$ or, equivalently,
an $\LM$-operad map $(\Delta^\op_+)^\LM\to\cO$, whose 
$a$-component is given by $A$ and
$m$-component is $\gamma\circ u_+:\Delta^\op_+\to\cO_m$.

Let $P_\LM$ be the monoidal envelope of $\LM$. 
The monoidal part of $P_\LM$ has objects  $a^n$, $n\geq 0$,
and the arrows are generated by the unit $\one\to a$, 
the product $aa\to a$, subject to the standard identities.
The module part of $P_\LM$ consists of the objects $a^nm$
with the obvious action of the monoidal part.

One has an $\LM$-monoidal functor 
$i:(\Delta^\op_+)^\LM\to P_\LM$ that is the unit on the 
monoidal part, and $u_+:\Delta^\op_+\to P_\LM$ carrying
$[n-1]$ to $a^nm$.

Since $\cO$ is $\LM$-monoidal, the map $\gamma:\LM\to\cO$
uniquely extends to $\Gamma: P_\LM\to\cO$. The composition
with $i$ yields the required $\LM$-operad map
$(\Delta^\op_+)^\LM\to\cO$.

\subsection{Bar resolution for presheaves }
\label{ss:barpresheaves}

We apply the reasoning of the previous subsection to
enriched presheaves. Given a monoidal category $\cM$
and an $\cM$-enriched category $\cA$ with the space of 
objects $X$, we want to describe a bar resolution for
$f\in P_\cM(\cA)=\Fun_{\cM^\rev}(\cA^\op,\cM)$.

The pair $\gamma=(\cA^\op,f)$ is given by the functor
\begin{equation}
\label{eq:gamma0}
\gamma:\LM\to\Quiv_{X^\op}^\LM(\cM^\rev,\cM).
\end{equation}
The information we need about the bar resolution of $f$ is contained
in the composition
$$
\gamma\circ u_+:\Delta^\op_+\to\LM\to\Quiv_{X^\op}^\LM(\cM^\rev,\cM).
$$
Since 
$$\Quiv^\LM_{X^\op}(\cM^\rev,\cM)=
\Funop_\LM(\LM_{X^\op},(\cM^\rev,\cM)),$$
the functor $\gamma$ defines (and is uniquely defined by)
\begin{equation}
\label{eq:gammaprime}
\gamma':\LM_{X^\op}\to(\cM^\rev,\cM),
\end{equation}
where the $\LM$-operad $\LM_{X^\op}$ is the one discussed in
\ref{sss:detailsonlmx}.
\subsubsection{}
\label{sss:lmcircdash}
We will need to know more about the $\LM$-operad $\LM_X$ and its base change $\LM^\circ_X:=\Delta^\op_+\times_{\LM}\LM_X$.

The explicit description of $\LM_X$ is given in \cite{H.EY}, 3.2. According to this description,
$\LM_X$ is presented by a functor
$$(\Delta_{/\LM})^\op\to\cS$$
carrying $\sigma:[n]\to\LM$ to $\Map(\cF(\sigma),X)$
where $\cF:\Delta_{/\LM}\to\Cat$
has values in conventional categories described by certain diagrams, see ~\cite{H.EY}, 3.2, especially the diagrams (51), (55), (60).

The base change $\LM_X^\circ=\Delta^\op_+\times_\LM\LM_X$ 
is described by the collection of $\cF(\sigma)$
for $\sigma:[n]\to\LM$ that factor through 
$u_+:\Delta^\op_+\to\LM$.

The categories $\cF(\sigma)$ for these values of $\sigma$
canonically decompose $\cF(\sigma)=\cF^-(\sigma)\sqcup [n]$, where $[n]$ 
appears as the rightmost component of $\cF(\sigma)$ in the graphic
presentation \cite{H.EY}, (55), (60), the component containing the vertex
$y_1$, see {\sl op. cit.}, diagram (51).

This means that $\LM^\circ_X=\LM^{\circ,-}_X\times X$,
so that the canonical projection $\LM^\circ_X\to\Delta^\op_+$ factors through the projection to $\LM^{\circ,-}_X$.

\subsubsection{}
The restriction of~(\ref{eq:gammaprime}) to 
$\LM^\circ_{X^\op}$ gives therefore
\begin{equation}
\label{eq:gammacirc}
\gamma^\circ:\LM^{\circ,-}_{X^\op}\times X^{\op}\to
u^*_+(\cM^\rev,\cM),
\end{equation}
where $u^*_+(\cM^\rev,\cM)$ is the base change of $(\cM^\rev,\cM)$ considered as a category over $\LM$.

We compose $\gamma^\circ$ with the equivalence 
$\op:u^*_+(\cM^\rev,\cM)\to u^*_+(\cM,\cM)$
\footnote{Note that this is an equivalence over $\op:\Delta^\op_+\to\Delta^\op_+$.}.

We get a functor

$$
\gamma^{\circ,-}:\LM^{\circ,-}_{X^\op}\to
\Fun(X^\op,u^*_+(\cM,\cM)).
$$
Since the projection $\LM^\circ_{X^\op}\to\Delta^\op_+$
factors through $\LM^{\circ,-}_{X^\op}\to\LM$, the map
$\gamma^{\circ,-}$ defines a map
\begin{equation}
\label{eq:gammacirc-}
\gamma^{\circ,-}:\LM^{\circ,-}_{X^\op}\to
u^*_+(\Fun^\LM(X^\op,(\cM,\cM))),
\end{equation}
where we use the notation of~\cite{H.EY}, 6.1.6 to define
the target of the map.

\subsubsection{}
The right-hand side of (\ref{eq:gammacirc-}) has, as $\Ass$-component,
the monoidal category $\Fun(X^\op,\cM)$. There is a monoidal functor
$$c:\cM\to\Fun(X^\op,\cM)$$
carrying $m\in\cM$ to the corresponding constant functor.
The arrow
\begin{equation}
c^!:(\cM,\Fun(X^\op,\cM))\to\Fun^\LM(X^\op,(\cM,\cM)).
\end{equation}
induced by $c$ is cartesian in $\Op_\LM$ by Lemma~\ref{lem:cartesianarrow}. Since the $\Ass$-component
of $\gamma^{\circ,-}$ factors through the map $\Ass_{X^\op}\to\cM$
defining $\cA^\op$, the map (\ref{eq:gammacirc-}) factors through $c^!$ 
giving the map that we denote by the same letter
\begin{equation}
\label{eq:gamma-circ-dash}
\gamma^{\circ,-}:\LM^{\circ,-}_{X^\op}\to
u^*_+(\cM,\Fun(X^\op,\cM)).
\end{equation}

\subsubsection{}
The category $\LM^{\circ,-}_{X^\op}$ has one object over the terminal
object $[-1]$ of $\Delta^\op_+$. We will denote this object by $*$
(note that it is not a terminal object). The functor $\gamma^{\circ,-}$
applied to $*$ gives $G(f)\in\Fun(X^\op,\cM)$ (once more, $G$
is the forgetful functor $P_\cM(\cA)\to\Fun(X^\op,\cM)$).

An object of $\LM^{\circ,-}_{X^\op}$ over $[n-1]$, $n\geq 1$, is given by a collection of objects $(y,x_n,y_n,\ldots,x_1)$ of $X$.

The functor $\gamma^{\circ,-}$ carries $(y,x_n,y_n,\ldots,x_1)$
to 
$$(f(y),\cA(y_n,x_n),\ldots,\cA(y_2,x_2),Y(x_1))\in\cM^n\times
\Fun(X^\op,\cM),$$
where $Y$ is the Yoneda embedding.

\subsubsection{} It is interesting to see what does
$\gamma^{\circ,-}$ do with the arrows.

An arrow $\alpha$ in $\LM^{\circ,-}_{X^\op}$ from 
$(y,x_n,y_n,\ldots,x_1)$ to $*$ is given by a collection of maps 
$\alpha_i: x_i\to y_{i+1}$ (or $\alpha_n:x_n\to y$).

The functor $\gamma^{\circ,-}$ carries $\alpha$ to the arrow 
$$
(f(y),\cA(y_n,x_n),\ldots,\cA(y_2,x_2),Y(x_1))\to f
$$
defined by the map
$$
f(y)\otimes\cA(y_n,x_n)\otimes\ldots\otimes\cA(y_2,x_2)\to f(x_1),
$$
defined by the $\cA^\op$-module structure on $P_\cM(\cA)\ni f$.

\

We are now ready to formulate the enriched presheaf analog of
Lemma~\ref{lem:bar-mod}. 

\begin{lem}
\label{lem:bar-pre}
The functor 
\begin{equation}
\label{eq:bar-pre}
(\LM^{\circ,-}_{X^\op})_{/*}\to(\cM,\Fun(X^\op,\cM))
\end{equation}
induced by $\gamma^{\circ,-}$, is an operadic colimit diagram.
\end{lem}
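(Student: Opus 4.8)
The plan is to deduce the statement from the module bar resolution of Lemma~\ref{lem:bar-mod}, applied to the $\LM$-operad $\cO=\Quiv^\LM_{X^\op}(\cM^\rev,\cM)$ and to the $\LM$-algebra $\gamma=(\cA^\op,f)$ of~(\ref{eq:gamma0}). By that lemma the composition $\gamma\circ u_+:\Delta^\op_+\to\cO$ is an operadic colimit diagram, and I will then transport this fact across the functor-operad presentation $\cO=\Funop_\LM(\LM_{X^\op},(\cM^\rev,\cM))$. Here $[-1]$ is the freely adjoined terminal object of $\Delta^\op_+$, so $\Delta^\op_+=(\Delta^\op)^\triangleright$ and, since $u_+([-1])=m$, the cone point maps to $\gamma(m)=f$. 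What remains is to rewrite this operadic colimit, living in the functor-operad $\cO$, as the asserted operadic colimit diagram valued in the genuine $\LM$-monoidal category $(\cM,\Fun(X^\op,\cM))$.

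First I would unwind the operadic colimit condition through the defining adjunction of $\Funop_\LM(\LM_{X^\op},-)$ and the flatness of $\LM_{X^\op}$, see~\ref{sss:detailsonlmx}. The mechanism is that the $m$-component of $\cO$ is the functor category $\Fun(X^\op,\cM)$, in which colimits are pointwise (as in the identification of~\ref{sss:klm}); hence an operadic colimit diagram in $\cO$ should be detected after base change along $u_+$ to $\LM^\circ_{X^\op}=\Delta^\op_+\times_\LM\LM_{X^\op}$. Using the decomposition $\LM^\circ_{X^\op}=\LM^{\circ,-}_{X^\op}\times X^\op$ of~\ref{sss:lmcircdash}, the $X^\op$-factor accounts for the pointwise direction and the remaining data is the functor $\gamma^{\circ,-}$ recorded above, carrying a chain $(y,x_n,y_n,\ldots,x_1)$ to $(f(y),\cA(y_n,x_n),\ldots,\cA(y_2,x_2),Y(x_1))$. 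Under this reindexing the cone $\Delta^\op_+=(\Delta^\op)^\triangleright$, whose tip sits over $m$, base-changes to the slice $(\LM^{\circ,-}_{X^\op})_{/*}$ with tip $*$, at which the value is $G(f)$. This is exactly the diagram~(\ref{eq:bar-pre}).

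The hard part will be this second step: controlling how operadic colimits behave under the right adjoint $\Funop_\LM(\LM_{X^\op},-)$ and carrying out the reindexing from $\Delta^\op_+$ to the slice $(\LM^{\circ,-}_{X^\op})_{/*}$ in a way that matches cone points. Equivalently, one may argue more directly: since $(\cM,\Fun(X^\op,\cM))$ is $\LM$-monoidal with colimits compatible with the tensor, the operadic colimit condition reduces, as in~\ref{crl:Afree} and~\ref{prp:colimitdescription}, to the assertion that the cocartesian shift of~(\ref{eq:bar-pre}) exhibits $G(f)$ as a colimit. As colimits in $\Fun(X^\op,\cM)$ are pointwise, it then suffices to verify at each $x_0\in X$ that the colimit of the tensored chains $f(y)\otimes\cA(y_n,x_n)\otimes\cdots\otimes\cA(y_2,x_2)\otimes\cA(x_0,x_1)$, coming from evaluating $Y(x_1)$ at $x_0$, equals $f(x_0)$; this is the $\cA^\op$-module bar resolution evaluated at $x_0$, again governed by Lemma~\ref{lem:bar-mod}. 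Either route requires the same bookkeeping identifying $(\LM^{\circ,-}_{X^\op})_{/*}$ as the correct index category with tip $*$, which I expect to be the delicate point.
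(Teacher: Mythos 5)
Your proposal follows essentially the same route as the paper's proof: Lemma~\ref{lem:bar-mod} applied to $\Quiv^\LM_{X^\op}(\cM^\rev,\cM)$ with $\gamma=(\cA^\op,f)$, transported through the adjunction defining $\Funop_\LM(\LM_{X^\op},-)$, with the decomposition $\LM^\circ_{X^\op}=\LM^{\circ,-}_{X^\op}\times X^\op$ and pointwise evaluation $e_x$ handling the reduction to an operadic colimit in $(\cM,\Fun(X^\op,\cM))$. The step you flag as delicate is exactly what the paper makes precise by rewriting the restriction functor $\Fun_\LM(\Delta^\op_+,\Quiv^\LM_{X^\op}(\cM^\rev,\cM))\to\Fun_\LM(\Delta^\op,\Quiv^\LM_{X^\op}(\cM^\rev,\cM))$, via adjunction, as $\Fun_\LM(\LM^\circ_{X^\op},(\cM,\cM))\to\Fun_\LM(\Delta^\op\times_{\Delta^\op_+}\LM^\circ_{X^\op},(\cM,\cM))$, so that $\gamma^\circ$ is an operadic left Kan extension of its restriction and the claim follows.
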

\begin{proof}
Note that the source of the functor~(\ref{eq:bar-pre}) has form $K^\triangleright$, where $K=\Delta^\op\times_{\Delta^\op_+}
(\LM^{\circ,-}_{X^\op})_{/*}$.
The evaluation map $e_x:\Fun(X^\op,\cM)\to\cM$, $x\in X^\op$, commutes 
with the left $\cM$-module structure, so, in order to prove
the lemma, it is sufficient to verify that for any $x\in X^\op$ the
composition of (\ref{eq:bar-pre}) with the evaluation map
is an operadic colimit diagram $K^\triangleright\to (\cM,\cM)$.

We know that the composition $\gamma\circ u_+:\Delta^\op_+\to\Quiv_{X^\op}^\LM(\cM^\rev,\cM)$ is an operadic colimit diagram.
Since $\cM$ is a monoidal category with colimits, the restriction functor
\begin{equation}
\label{eq:restriction}
\Fun_\LM(\Delta^\op_+,\Quiv_{X^\op}^\LM(\cM^\rev,\cM))\to
\Fun_\LM(\Delta^\op,\Quiv_{X^\op}^\LM(\cM^\rev,\cM))
\end{equation}
has a left adjoint carrying $\gamma\circ u$ to $\gamma\circ u_+$.
Since, by adjunction, the arrow (\ref{eq:restriction}) can be rewritten
as 
\begin{equation}
\label{eq:restriction2}
\Fun_\LM(\LM_{X^\op}^\circ,(\cM,\cM))\to
\Fun_\LM(\Delta^\op\times_{\Delta^\op_+}\LM_{X^\op}^\circ,(\cM,\cM)),
\end{equation}
this implies that the functor $\gamma^\circ$ is an operadic left Kan extension of its restriction to $\Delta^\op\times_{\Delta^\op_+}\LM_{X^\op}^\circ$. This easily implies the claim.

\end{proof}
\subsubsection{}
We now intend to show that (\ref{eq:gamma-circ-dash}) canonically 
factors through the forgetful functor
\begin{equation}
\label{eq:lifting-gamma-circ-dash}
(\cM,P_\cM(\cA))\stackrel{G}{\to}(\cM,\Fun(X^\op,\cM)).
\end{equation}

This implies the main result of this section, 
Proposition~\ref{prp:barquiv}.

To deduce the factorization, we need, according to~\ref{crl:monadaction},
to present the action of the monad $G\circ F$ defined by $\cA^\op$ on 
the functor (\ref{eq:gamma-circ-dash}). As a first step, we will describe a left $\Quiv_{X^\op}(\cM^\rev)$-tensored structure on the target of
(\ref{eq:gamma-circ-dash}), the category 
$u^*_+(\cM,\Fun(X^\op\cM))$.

Here is a general construction in the context of $\BM$-monoidal categories.

\subsubsection{Condensation}

Let $\cO=(\cO_a,\cO_m,\cO_b)$ be a $\BM$-monoidal category.
We can look at $\cO_m$ as an object of 
$\RMod_{\cO_b}(\LMod_{\cO_a}(\Cat))$. Its bar construction gives an augmented simplicial object in $\LMod_{\cO_a}(\Cat)$ that classifies
a cocartesian fibration over $\Delta^\op_+$. The total category of
this cocartesian fibration in $v^*_+(\cO_m,\cO_b)$ where
the functor $v_+:\Delta^\op\to\RM$ is defined as the composition of
$u_+$ with $\op:\LM\to\RM$. The resulting $\LM$-monoidal category
will be called {\sl the condensation} of $\cO$ and denoted by $\cond(\cO)$.

Applying the condensation functor to $\cO=\Quiv^\BM_{X^\op}(\cM^\rev)$,
we get an $\LM$-monoidal category whose monoidal part is
$\Quiv_{X^\op}(\cM^\rev)$ and whose left-tensored part is
$v^*_+(\Fun(X^\op,\cM),\cM^\rev)=u^*_+(M,\Fun(X^\op,\cM))$.

\subsubsection{}
Therefore, in order to show that (\ref{eq:gamma-circ-dash}) 
canonically factors through~(\ref{eq:lifting-gamma-circ-dash}), 
we have to extend $\gamma^{\circ,-}$ to a map of operads
\begin{equation}
\label{eq:Gamma}
\Gamma:(\LM^{\circ,-}_{X^\op})^\LM\to
\cond(\Quiv_{X^\op}^\BM(\cM^\rev)). 
\end{equation}
\subsubsection{} We will construct (\ref{eq:Gamma}) using the presentation of $\LM$-operads by simplicial spaces over $\LM$, 
or, equivalently, by presheaves on $\Delta_{/\LM}$. We will describe
the functors corresponding to the source and the target of $\Gamma$,
and we will show that (\ref{eq:gamma0}) allows one to construct a map
of these presheaves.

We will now describe the source of (\ref{eq:Gamma}).
First of all, $\LM^{\circ,-}_{X^\op}$ is a category over $\Delta^\op_+$,
so the space of its $k$-simplices decomposes
$$
(\LM^{\circ,-}_{X^\op})_k=\coprod_{\tau:[k]\to\Delta^\op_+}
\LM^{\circ,-}_{X^\op}(u_+\circ\tau),
$$
where $\LM^{\circ,-}_{X^\op}(u_+\circ\tau)=
\Map(\cF^-(u_+\circ\tau),X^\op)$,
see the notation of \ref{sss:lmcircdash}.

Let $\sigma:[n]\to\LM$ be an object of $\Delta_{/\LM}$
presented by a sequence
$$
\sigma: a^{d_0}m\to\ldots a^{d_k}m\to a^{d_{k+1}}\to
\ldots a^{d_n}.
$$
We will assume $k=-1$ if $\sigma$ factors through 
$\Ass\to\LM$.

Then the source of (\ref{eq:Gamma}) is described by the following 
formula.

\begin{equation}
(\LM^{\circ,-}_{X^\op})^\LM(\sigma)=
\begin{cases}
[0], & \mathrm{ if\  }k=-1,\\
(\LM^{\circ,-}_{X^\op})_k=\coprod_{\tau:[k]\to\Delta^\op_+}
\LM^{\circ,-}_{X^\op}(u_+\circ\tau) & \mathrm{ otherwise}.
\end{cases}
\end{equation}
The description of the target of (\ref{eq:Gamma}) will be presented
in \ref{sss:target}, after a certain digression.

\subsubsection{Internal mapping operad, reformulated}
\label{sss:internalmappingoperad}

We need some detail on internal operad objects, \cite{H.EY}, 2.8.

The direct product in the category $P(C)$ of presheaves 
has a right adjoint assigning to a pair $F,G\in P(C)$ a presheaf
$\Fun_{P(C)}(F,G)$ whose value at $c\in C$ is calculated as the limit
\begin{equation}
\label{eq:homp}
\Fun_{P(C)}(F,G)(c)=\lim_{a\to b\to c}\Map(F(b),G(a)).
\end{equation}

Fix a category $B$ and let us look for a similar description of the internal Hom in $\Cat_{/B}$. The latter is a full subcategory
of $P(\Delta_{/B})$, so we can try to use the formula (\ref{eq:homp})
with $C=\Delta_{/B}$. Given $F,G\in\Cat_{/B}$, we are looking for
an object $\Fun_{\Cat_{/B}}(F,G)$ furnishing an equivalence
\begin{equation}
\Map_{\Cat_{/B}}(H,\Fun_{\Cat_{/B}}(F,G))=\Map_{\Cat_{/B}}(H\times F,G).
\end{equation}
Since $\Cat_{/B}$ is a full subcategory of $P(\Delta_{/B})$, and since the representable presheaves belong to $\Cat_{/B}$, the object
$\Fun_{\Cat_{/B}}(F,G)$, if it exists, is equivalent to
the presheaf
$\Fun_{P(\Delta_{/B})}(F,G)$. This provides a very easy criterion for the
existence of $\Fun_{\Cat_{/B}}(F,G)$: it exists if and only if 
$\Fun_{P(\Delta_{/B})}(F,G)$ is a category over $B$.
Note that, by definition, for $b\in B$, the fiber
$\Fun_{\Cat_{/B}}(F,G)_b$ identifies with $\Fun(F_b,G_b)$.

Let us now apply the above reasoning to operads. 
If $\cP$ is a flat $\cO$-operad, one defines a marked category
$\pi:\cP'\to\cP$ over $\cP$ by the formulas
\begin{equation}
\label{eq:pprime}
\cP'=\Fun^{in}([1],\cO)\times_\cO\cP,
\end{equation}
where $\Fun^{in}([1],\cO)$ denotes the category of inert arrows in $\cO$.
Let $s,t:\Fun^{in}([1],\cO)\to\cO$ be the standard projections.
An arrow in $\cP'$ is marked iff its projections to $\cP$ and to $\cO$
(via $s$) are inert. $\cP'$ considered as a category over $\cO$ is
flat. One defines $\Fun_\cO^\sharp(\cP',\cQ)$ as 
the full subcategory of $\Fun_{\Cat_{/\cO}}(\cP',\cQ)$ spanned by the 
arrows $\alpha:\cP'_o\to\cQ_o$, for some $o\in\cO$, carrying 
the marked arrows in $\cP'_o$ to equivalences.

Proposition 2.8.3 of \cite{H.EY} claims that, for any $\cO$-operad $\cQ$,
$\Fun^\sharp_\cO(\cP',\cQ)$ is an $\cO$-operad representing
$\Funop_\cO(\cP,\cQ)$.
 
In particular, for $s:[n]\to\cO$,  
$$
\Funop_\cO(\cP,\cQ)(s)\subset\lim_{u\to v\to s}\Map(\cP'(v),\cQ(u)),
$$
consists of collections whose each component corresponding to
$u=v:[1]\to[0]\stackrel{k}{\to}[n]\stackrel{s}{\to}\cO$ carries the 
marked arrows of $\cP'_{s(k)}$ to equivalences in $\cQ_{s(k)}$.

\begin{rem}
\label{rem:pprimep}
Note that a map $\cP'_x\to\cQ_x$ factoring through $\pi:\cP'_x\to\cP_x$ automatically carries the marked arrows to equivalences.
\end{rem}

\subsubsection{Condensation, for $\BM$-operads}

We need more explicit formulas for the condensation of a $\BM$-monoidal category. This operation can be defined in the greater 
generality of $\BM$-operads.

Given a $\BM$-operad $p:\cO\to\BM$, we will define its ``condensation'' 
$q:\cO'=\cond(\cO)\to\LM$ so that
\begin{itemize}
\item $\cO'\times_\LM\Ass=\cO\times_\BM\Ass_-$.
\item $\cO'_m=v^*_+(\cO\times_\BM\RM)$,
\end{itemize}
where $v_+:\Delta^\op_+\to\RM$ is defined as the composition
$v_+=\op\circ u_+$.

We will be using a presentation of operads
by presheaves on $\Delta_{/\BM}$ and $\Delta_{/\LM}$.

Recall that  $\BM=(\Delta_{/[1]})^\op$; its objects are arrows
$s:[n]\to[1]$ and an arrow from $s:[n]\to[1]$ to $t:[m]\to[1]$
is given by $f:[m]\to[n]$ such that $s\circ f=t$.

An object $s:[n]\to[1]$ of $\BM$ defined by the formulas
$s(i)=0, i=0,\ldots,k, \ s(i)=1, i>k$, is otherwise denoted 
$a^kmb^{n-k-1}$, see~\cite{H.EY}, 2.9.2.

We denote by $\BM^0$ the full subcategory of $\BM$ spanned by 
$s:[n]\to[1]$ with $s(0)=0$. The category $\LM$ is the full subcategory 
of $\BM^0$ spanned by the arrows $s:[n]\to[1]$  having at most one 
value of $1$. The subcategories $\BM^1$ and $\RM$ are defined as
images of $\BM^0$ and $\LM$ under $\op:\BM\to\BM$.

The full embeddings $\LM\to\BM^0$ and $\RM\to\BM^1$
have left adjoint functors 
$\ell:\BM^0\to\LM$ and $r:\BM^1\to\RM$ erasing superfluous values
of $1$ and $0$ respectively. The functors $\ell$ and $r$ induce
functors $\Delta_{/\BM^0}\to\Delta_{/\LM}$ and $\Delta_{/\BM^1}
\to\Delta_{/\RM}$.
We will denote by $\Delta_{/\RM}^\act$ the category of simplices in
$\RM$ whose all arrows are active.

For $\tau:[n]\to\BM^0$ given by
$$\tau: a^{c_0}mb^{d_0}\to\ldots\to a^{c_k}mb^{d_k}\to a^{c_{k+1}}
\to\ldots\to a^{c_n}$$
or
$$\tau: a^{c_0}mb^{d_0}\to\ldots\to a^{c_k}mb^{d_k}\to b^{d_{k+1}}
\to\ldots\to b^{d_n},$$
we denote by $\tau_-$ the $k$-simplex
$$a^{c_0}mb^{d_0}\to\ldots\to a^{c_k}mb^{d_k}$$

(if $\tau$ is a simplex in $\Ass_-\subset\BM^0$, we put $k=-1$
and $\tau_-$ is empty in this case).

For $\sigma\in\Delta_{/\LM}$ we define
\begin{equation}
\Pi(\sigma)=\{\tau\in\Delta_{/\BM^0}|\ell{\tau}=\sigma,
r(\tau_-)\in\Delta^\act_{/\RM}\}.
\end{equation}
Note that for $\sigma\in\Delta_{/\Ass}$ $k=-1$ and $\Pi(\sigma)=\{\sigma\}$.

The assignment $\sigma\mapsto\Pi(\sigma)$ is obviously functorial, with a map
$\alpha:[m]\to[n]$ defining $\Pi(\sigma)\to\Pi(\sigma\circ\alpha)$ carrying $\tau$ to $\tau\circ\alpha$.

\

Let a $\BM$-operad $\cO$ be described by a presheaf
$F\in P(\Delta_{/\BM})$. We define a presheaf
$F'\in P(\Delta_{/\LM})$ describing $\cond(\cO)$ by the formula
\begin{equation}
\label{eq:condense}
 F'(\sigma)=\coprod_{\tau\in\Pi(\sigma)}F(\tau).
\end{equation}

\begin{Lem}
The presheaf $F'$ defined above represents an $\LM$-operad.
\end{Lem}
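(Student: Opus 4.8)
The plan is to verify that $F'$ satisfies the conditions which, by the criterion of \cite{H.EY}, Section~2, single out those presheaves on $\Delta_{/\LM}$ that represent an $\LM$-operad: the Segal condition over $\LM$ together with the completeness (univalence) condition. Functoriality of $\sigma\mapsto\Pi(\sigma)$ and of $F$ already make $F'$ a presheaf, so the whole content lies in these two conditions, and I would reduce both of them to the corresponding properties of the given $\BM$-operad $F$.

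The key step is a purely combinatorial statement about $\Pi$. For $\sigma:[n]\to\LM$ with spine edges $e_1,\dots,e_n$ and interior vertices $v_1,\dots,v_{n-1}$, restriction of simplices should induce a bijection
\[
\Pi(\sigma)\ \xrightarrow{\ \sim\ }\ \Pi(e_1)\times_{\Pi(v_1)}\cdots\times_{\Pi(v_{n-1})}\Pi(e_n).
\]
A functor $\tau:[n]\to\BM^0$ is determined by its restrictions to the edges, glued over the vertices, so the claim amounts to the assertion that the two conditions cutting out $\Pi(\sigma)$ are \emph{edge-local}. The equation $\ell\tau=\sigma$ of functors $[n]\to\LM$ may be tested edge by edge; and the requirement $r(\tau_-)\in\Delta^{\act}_{/\RM}$, that every arrow of the reduced module part be active, is by definition imposed one arrow at a time. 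Here the index $k$ at which $\sigma$ passes from the module colors to the associative colors is fixed by $\sigma$, so the active condition is placed on a determined set of edges and the insertion of the $b$'s occurs only inside the module part; this is the bookkeeping one must get right.

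Granting the bijection, the Segal condition for $F'$ becomes formal. The spine limit of $F'$ at $\sigma$ is $F'(e_1)\times_{F'(v_1)}\cdots\times_{F'(v_{n-1})}F'(e_n)$, and since each $F'(e_j)=\coprod_{\tau_j\in\Pi(e_j)}F(\tau_j)$ is a coproduct indexed by a discrete set, the fibre products distribute over these coproducts (the matching being taken in the discrete sets $\Pi(v_j)$). Using the displayed bijection to identify the surviving index set with $\Pi(\sigma)$, the limit becomes
\[
\coprod_{\tau\in\Pi(\sigma)}\ F(\tau|_{e_1})\times_{F(\tau|_{v_1})}\cdots\times_{F(\tau|_{v_{n-1}})}F(\tau|_{e_n}),
\]
and the Segal condition for the $\BM$-operad $F$ collapses each inner fibre product to $F(\tau)$, yielding exactly $\coprod_{\tau\in\Pi(\sigma)}F(\tau)=F'(\sigma)$. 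The completeness condition, which concerns only the degenerate $0$- and $1$-simplices, is inherited from $F$ by the same edge-local analysis of $\Pi$.

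The main obstacle is precisely the combinatorial bijection of the second paragraph: one must check carefully that the active condition on $r(\tau_-)$ really is edge-local and interacts correctly with the constraint $\ell\tau=\sigma$, so that compatible edgewise lifts glue to a unique admissible $\tau$ with no global obstruction and with the $b$-insertions confined to the module part. Once this is secured, everything else is a formal manipulation of coproducts and fibre products together with the Segal property of $F$.
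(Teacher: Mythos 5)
There is a genuine gap: your proposal only establishes that $F'$ is a complete Segal presheaf on $\Delta_{/\LM}$, i.e.\ that it represents a \emph{category over} $\LM$. That is not the statement of the lemma. An $\LM$-operad is a category over $\LM$ satisfying the additional fibrousness conditions (\cite{L.HA}, 2.3.3.28, or \cite{H.EY}, 2.6.3): cocartesian lifts of inert arrows must exist, the induced functors on fibers must exhibit the fiber over $a^nm$ as the appropriate product, and the mapping-space condition must hold. This is the second, and substantive, half of the paper's proof: one must exhibit, for each inert arrow $\alpha$ of $\LM$ (treating separately the inerts $a^nm\to a^l$ that erase $m$ and those $a^nm\to a^lm$, $a^n\to a^l$ that do not), a distinguished $\tau\in\Pi(\alpha)$ with $\ell(\tau)=\alpha$ (and, in the second case, $r(\tau)=\id_{mb^k}$) whose inert lift in $F(\tau)$ serves as the cocartesian lift in $F'$. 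Your criterion ``Segal $+$ completeness'' does not single out operads among presheaves on $\Delta_{/\LM}$, so the proof as written does not reach the conclusion.

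A secondary issue: the completeness check is not ``edge-local'' in the way you assert. For the degenerate simplices $w_n$ at a color $w=a^km$, the index set $\Pi(w_n)$ contains many non-degenerate simplices $\tau$ of $\BM^0$ (chains $a^kmb^{j_0}\to a^kmb^{j_1}\to\cdots$ with active $r$-parts), so the fiber of $F'$ over $w$ is a genuinely larger category than the fiber of $F$; completeness of $F'$ is not formally inherited from completeness of $F$. The paper instead identifies the simplicial space $n\mapsto F'(w_n)$ with the product of $\{a^k\}\times_\BM\cO$ and $v^*_+(\cO\times_\BM\RM)$, the latter being complete because it is the total category of a cocartesian fibration over $\Delta^\op_+$. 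Your spine decomposition of $\Pi(\sigma)$ and the resulting Segal argument are fine and agree with what the paper leaves implicit, but the proof needs both the corrected completeness argument and, above all, the fibrousness verification.
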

\begin{proof}
1. Segal condition follows from the definition of $\Pi(\sigma)$. 
To verify completeness, we can fix $w=a^km\in\LM$ and study the simplicial space $n\mapsto\coprod_{\tau\in\Pi(w_n)}F(\tau)$,
where $w_n$ is the degenerate $n$-simplex determined by $w\in\LM$.
This simplicial space is equivalent to the product of the fibers
$\{a^k\}\times_\BM\cO$ and $v^*_+(\cO)$, which is, of course, complete
as a simplicial space. Thus, $F'$ represents a category over $\LM$
which we will denote by $\cond(\cO)$.

2. It remains to prove that $\cond(\cO)$ is fibrous.

Let us describe cocartesian liftings of the inerts in 
$\LM$. These are of two types
\begin{itemize}
\item Erasing $m$. Such inert has form $\alpha:a^nm\to a^l$.
\item Not erasing $m$: either $\alpha:a^n\to a^l$ or 
$\alpha:a^nm\to a^lm$.
\end{itemize}
In the first case the cocartesian lifting of $\alpha$ in $F'(\alpha)$ having a source in $F(t)$ for $t=a^nmb^k$ is an inert arrow in
$F(\tau)$, where $\tau\in\Pi(\alpha)$ is (the only) inert arrow
from $a^nmb^k$ to $a^l$ such that $\ell(\tau)=\alpha$.

In the second case, for $\alpha:a^nm\to a^lm$, with the source
$t=a^nmb^k$, is the inert arrow in $F(\tau)$ where $\tau$ is defined
by the conditions $\ell(\tau)=\alpha$, $r(\tau)=\id_{mb^k}$.

The rest of the fibrousness conditions \cite{L.HA}, 2.3.3.28 or 
\cite{H.EY}, 2.6.3 easily follow from the above description.

\end{proof}

\subsubsection{}
\label{sss:target}

We will now present the target of (\ref{eq:Gamma}) by a presheaf on
$\Delta_{/\LM}$.

For $\sigma:[n]\to\LM$ we have
\begin{eqnarray}
\cond(\Quiv_{X^\op}^\BM(\cM^\rev))(\sigma)=\coprod_{\hat\sigma\in\Pi(\sigma)}\Quiv_{X^\op}^\BM(\cM^\rev)(\hat\sigma)\\
\nonumber\subset\coprod_{\hat\sigma\in\Pi(\sigma)}\lim_{u\to v\to\hat\sigma}
\Map(\BM'_{X^\op}(v),\cM(\op\circ u)),
\end{eqnarray}
where the inclusion means that we have to choose the connected components preserving the inerts, and  $\BM'_{X^\op}$ is defined by the formula~(\ref{eq:pprime}).

\subsubsection{}

We are now ready to construct $\Gamma$. It consists of a compatible collection of maps

\begin{equation}
\label{eq:Gamma-det}
\Gamma_{\sigma,\tau,s,t}:\LM^{\circ,-}_{X^\op}(u_+\circ\tau)\times \BM'_{X^\op}(t)\to\cM(\pi\circ\op\circ s)
\end{equation}
for each $\sigma\in\Delta_{\LM}$, $\hat\sigma=\sigma*
(v_+\circ\op\circ\tau)\in\Pi(\sigma)$
and $s\to t\to\hat\sigma$ in $\Delta_{/\BM}$.

Equivalently, this can be described by a compatible collection
\begin{equation}
\label{eq:Gamma-det-1}
\Gamma_{\sigma,\tau}:\LM^{\circ,-}_{X^\op}(u_+\circ\tau)\times \BM'_{X^\op}(\hat\sigma)\to\cM(\pi\circ\op\circ\hat\sigma)
\end{equation}
for each $\sigma\in\Delta_{\LM}$ and 
$\hat\sigma=\sigma*(v_+\circ\op\circ\tau)\in\Pi(\sigma)$.

\subsubsection{} The collection of maps (\ref{eq:Gamma-det-1}) 
we are going to present will factor through the natural projections
$\BM'_{X^\op}\to\BM_{X^\op}$, so, by Remark~\ref{rem:pprimep},
it will induce a map to $\Fun^\sharp(\BM'_{X^\op},\cM^\rev)$
as needed.

The explicit formulas
for $\BM_X$ show that $\BM_{X^\op}(\hat\sigma)=\BM_{X^\op}(\sigma)$,
so we will rewrite the source of (\ref{eq:Gamma-det-1}) as
\begin{eqnarray}
\nonumber
\Map(\cF^-(u_+\circ\tau),X^\op)\times\Map(\cF(\sigma),X^\op)=
\Map(\cF^-(u_+\circ\tau)\sqcup\cF(\sigma),X^\op)=\\
\nonumber \Map(\cF(u_+\circ\tau)\sqcup^{\cF(\sigma_k)}\cF(\sigma),X^\op)=
\Map(\cF(\sigma),X^\op)\times_{\Map(\cF(\sigma_k),X^\op)}\Map(\cF(\tau),X^\op),
\end{eqnarray}
where  $\sigma_k$ denotes the $k$-simplex in $\LM$ with
the constant value $m\in\LM$. Similarly, one has
a canonical presentation
\begin{equation}
\cM(\pi\circ\op\circ\hat\sigma)=\cM(\pi\circ\op\circ\sigma)\times_{\cM(\pi\circ\sigma_n)}\cM(\pi\circ\op\circ\tau).
\end{equation}

\subsubsection{}
The map $\gamma'$ (\ref{eq:gammaprime})
is described by a map of presheaves, given, for each
$\sigma:[n]\to\LM$, by a map
\begin{equation}
\label{eq:gammasigma}
\gamma_\sigma:\Map(\cF(\sigma),X^\op)\to M(\pi\circ\op\circ
\sigma).
\end{equation}
The collection of maps $\Gamma_{\sigma,\tau}$ is defined as the fiber product of $\gamma_\sigma$ and $\gamma_\tau$.

\subsection{Conclusion}
We have just  constructed the functor $\Gamma$ (\ref{eq:Gamma}) 
defined by the collection of maps $\Gamma_{\sigma,\tau}$
(\ref{eq:Gamma-det-1}) obtained as the fiber product of
$\gamma_\sigma$ and $\gamma_{u_+\tau}$ (\ref{eq:gammasigma}).
This implies the main result of this section.
\begin{prp}
\label{prp:barquiv}
The functor $\Gamma$ (\ref{eq:Gamma}) defines an operadic colimit diagram
\begin{equation}
(\LM^{\circ,-}_{X^\op})_{/*}\to(\cM,P_\cM(\cA)).
\end{equation}
\end{prp}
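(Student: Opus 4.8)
The plan is to obtain the statement by lifting, along the forgetful functor $G\colon(\cM,P_\cM(\cA))\to(\cM,\Fun(X^\op,\cM))$ of (\ref{eq:lifting-gamma-circ-dash}), the operadic colimit diagram already furnished by Lemma~\ref{lem:bar-pre}, namely the functor
\[
(\LM^{\circ,-}_{X^\op})_{/*}\to(\cM,\Fun(X^\op,\cM))
\]
induced by $\gamma^{\circ,-}$ (\ref{eq:gamma-circ-dash}). The lifting datum is exactly an action on this bar diagram of the monad $G\circ F$ attached to the algebra $\cA^\op\in\Alg(\Quiv_{X^\op}(\cM^\rev))$, and this is precisely what the map $\Gamma$ (\ref{eq:Gamma}) records: its target $\cond(\Quiv^\BM_{X^\op}(\cM^\rev))$ is the $\LM$-monoidal category whose monoidal component is $\Quiv_{X^\op}(\cM^\rev)$ and whose left-tensored component is $u^*_+(\cM,\Fun(X^\op,\cM))$, so that the constructed $\Gamma$ amounts to the algebra $\cA^\op$ together with a compatible action on $\gamma^{\circ,-}$.

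First I would invoke Corollary~\ref{crl:monadaction}. Since $F\dashv G$ is monadic with associated monad $G\circ F$, the action supplied by $\Gamma$ is equivalent to a factorization $\gamma^{\circ,-}=G\circ\widetilde\gamma$ through a unique functor
\[
\widetilde\gamma\colon(\LM^{\circ,-}_{X^\op})_{/*}\to(\cM,P_\cM(\cA)),
\]
which is the functor asserted in the proposition. It then remains only to promote the operadic colimit property from the base $(\cM,\Fun(X^\op,\cM))$ to the total left $\cM$-tensored category $(\cM,P_\cM(\cA))$.

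For that last step I would use that $G$, being the forgetful functor out of a module category, creates colimits by~\cite{L.HA}, 3.2.3.1. An operadic colimit diagram in a left $\cM$-tensored category is, by the definition recalled in~\ref{ss:OLKE}, a diagram whose composite with each tensoring functor $\oplus C$, $C\in\cM$, is a relative colimit diagram; since $G$ is $\cM$-linear it commutes with these tensorings, and, creating colimits, it reflects the relevant relative colimit diagrams. Hence $\widetilde\gamma$ is an operadic colimit diagram as soon as $G\circ\widetilde\gamma=\gamma^{\circ,-}$ is, which is the content of Lemma~\ref{lem:bar-pre}. The genuinely delicate point is not this formal packaging but the input assembled beforehand: verifying that the fiber-product maps $\Gamma_{\sigma,\tau}$ (\ref{eq:Gamma-det-1}) glue to an honest map of $\LM$-operads into the condensation --- that they respect the Segal and inert structure and land in the connected components singled out in~\ref{sss:target} --- and that the left-tensored component of $\cond(\Quiv^\BM_{X^\op}(\cM^\rev))$ is indeed $u^*_+(\cM,\Fun(X^\op,\cM))$ carrying the $\cA^\op$-monad. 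Granting these identifications, the conclusion follows formally.
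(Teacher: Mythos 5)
Your proposal follows essentially the same route as the paper: Lemma~\ref{lem:bar-pre} supplies the operadic colimit diagram in $(\cM,\Fun(X^\op,\cM))$, the map $\Gamma$ into the condensation encodes exactly the $G\circ F$-monad action needed to factor $\gamma^{\circ,-}$ through $G$ via Corollary~\ref{crl:monadaction}, and the colimit property is transported along $G$ because it creates the relevant colimits. You also correctly locate the real work in the construction of $\Gamma$ itself, which is what the bulk of Subsection~\ref{ss:barpresheaves} is devoted to.
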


\section{Weighted colimits}
\label{sec:wc}

In this paper we study weighted colimits of $\cM$-functors.

Given an $\cM$-functor $f:\cA\to\cB$ where $\cA$ is $\cM$-enriched category 
and $\cB$ is a left $\cM$-module, and an enriched presheaf
$W\in P_\cM(\cA)$, we will define a weighted colimit 
 $\colim_W(f)\in\cB$. The construction is functorial in 
$W$, so that $\colim_W(f)$ is the evaluation at $W$
of a certain colimit preserving functor 
$\colim(f): P_\cM(\cA)\to\cB$ preserving the left $\cM$-module structure. The composition 
$\colim(f)\circ Y$ with the enriched Yoneda embedding
yields $f:\cA\to\cB$.

The construction of weighted colimit is also functorial in
$f:\cA\to\cB$. This will imply Theorem~\ref{thm:universal}
claiming that the Yoneda embedding $Y:\cA\to P_\cM(\cA)$
is a universal $\cM$-functor. 

\subsection{Internal Hom}
\label{ss:funak}

We keep the notation of \ref{sss:tens-succ}.
Let $\cC=\Cat^L$.

Given $A,B,C$ associative algebras in $\cC$, we have a functor
$$
\RT_{A,B,C}:_A\BMod_B(\cC)\times{_B\BMod_C}(\cC)\to
_A\!\!\BMod_C(\cC)
$$
defined by the relative tensor product. This functor has a right
adjoint
$$
\Fun^L_A:{_A\BMod_B}(\cC)^\op\times_A\!\!\BMod_C(\cC)\to
_B\!\!\BMod_C(\cC) 
$$
which we are now going to describe.

\subsubsection{}

The formula
$$
\Map_{_B\BMod_C}(N,\Fun^L_{A,B,C}(M,K))=\Map_{_A\BMod_C}(M\otimes_BN,K)
$$
determines $\Fun^L_{A,B,C}$~\footnote{This is a temporary notation.} as presheaf. In the case when $M$ is a free
$A-B$-bimodule $M=A\otimes X\otimes B$, this presheaf is represented 
by the $B-C$-bimodule $\Fun^L(X\otimes B,K)$; since any bimodule is a colimit of free bimodules and since the Yoneda embedding commutes with the limits, this proves the existence of $\Fun^L_{A,B,C}(M,K)$ in general~\footnote{The forgetful functor $\Cat^L\to\CAT$ to the category of (big) categories creates the limits.}.
\subsubsection{}
The functor $\Fun^L_{A,B,C}$, as defined above, depends on 
three algebras $A,B,C$. We will omit $B$ and $C$ from the notation for the following reason.

Let $b:B\to B'$ and $c:C\to C'$ be algebra maps. Then 
the $B'-C'$-bimodule $\Fun^L_{A,B',C'}(M,K)$ identifies with the
restriction of scalars of the $B-C$-bimodule 
$\Fun^L_{A,B,C}(M,K)$. 

\subsubsection{}
Thus, we now assume that $B=C=\one$. For $M,K\in\LMod_A(\cC)$,
we can define
$\Fun^L_A(M,K)$  
as the full subcategory of 
$\Fun_{\LMod^w_A}(M,K)$, see~\ref{sss:funlmodw},
 spanned by the lax $\LM$-monoidal functors
$f=(\id_A,f_m):(A,M)\to(A,K)$ satisfying two extra properties:
\begin{itemize}
\item $f$ is $\LM$-monoidal.
\item $f_m$ preserves small colimits.
\end{itemize}

 As we explained above, in the case when $M$ is an $A-B$-bimodule and
 $K$ is an $A-C$-bimodule, $\Fun^L_A(M,K)$ acquires a natural
 $B-C$-bimodule structure.

\subsection{Weighted colimit}
\label{ss:RLQuiv}

We apply the notion of tensor product described 
in~\ref{ss:relativetensor} to the following context.

Fix a monoidal category 
$\cM\in\Alg_\Ass(\Cat^L)$
and a $X\in\Cat$.
Let $\cB$ be a left $\cM$-module. Let $N=\Fun(X,\cM)$. This is 
a right $\cM$-module.  
\begin{lem}
\label{lem:Nrightdualizable}
$N$ is right dualizable. Its right dual is $M=\Fun(X^\op,\cM)$ considered as a left $\cM$-module.
\end{lem}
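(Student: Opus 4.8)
The plan is to produce the duality data by hand and to reduce the two triangle identities to the enriched co-Yoneda (density) formula. I work in $\cC=\Cat^\cK$ with its $\cK$-cocontinuous tensor product, whose unit $\one$ is the free $\cK$-cocompletion of a point, so that a $\cK$-cocontinuous functor $\one\to\cD$ is the same thing as an object of $\cD$. In the notion of a dual pair recalled in \ref{ss:relativetensor} (\cite{L.HA}, 4.6.2.3) I take $A=\cM$ and $B=\one$, so that $N=\Fun(X,\cM)\in{}_\one\BMod_\cM(\cC)=\RMod_\cM(\cC)$ and the candidate right dual is $M=\Fun(X^\op,\cM)\in{}_\cM\BMod_\one(\cC)=\LMod_\cM(\cC)$. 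It then remains to exhibit a coevaluation $c\colon\one\to N\otimes_\cM M$ and an evaluation $e\colon M\otimes_\one N\to\cM$ satisfying the zigzag identities; by the parenthetical in that definition, such data exhibits $M$ as a right dual of $N$. For the evaluation I take the coend (weighted colimit) functor sending $(\psi,\phi)$ to $\int^{x\in X}\psi(x)\otimes\phi(x)$. Since a coend is computed as the geometric realization of a two-sided bar construction (cf. \ref{sss:bar}) and $\Delta^\op\in\cK$, this is a $\cK$-cocontinuous functor $M\otimes_\one N\to\cM$, and it is $\cM$-$\cM$-bilinear because the tensor product of $\cM$ preserves colimits in each variable.

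For the coevaluation I would use the bar-construction description of the relative tensor product from \ref{prp:colimitdescription}: $N\otimes_\cM M$ is the geometric realization of $\BAR_\bullet(N,\cM,M)$, whose $0$-simplices are $N\otimes_\one M$. Under the identification of $c$ with an object of $N\otimes_\cM M$, I let $c$ be the object classified by the $\cM$-valued hom profunctor $X^\op\times X\to\cM$, $(y,x)\mapsto\Map_X(y,x)\otimes\one_\cM$ (the copower of the unit of $\cM$ by the mapping space). Heuristically $c$ is the image under the canonical map $N\otimes_\one M\to N\otimes_\cM M$ of the diagonal family of representables $x\mapsto(Y_X(x),Y_{X^\op}(x))$ assembled over $X$; this is the enriched analogue of the classical $1\mapsto\sum_x e_x\otimes e^x$.

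Finally I would verify the two triangle identities. Tracing $\psi\in M$ through $M\xrightarrow{\id_M\otimes c}(M\otimes_\one N)\otimes_\cM M\xrightarrow{e\otimes\id_M}M$ yields, after applying $e$, the functor $y\mapsto\int^{x}\Map_X(y,x)\otimes\psi(x)$, and the identity amounts to the assertion that the canonical map from this coend to $\psi(y)$ is an equivalence; this is exactly the enriched co-Yoneda (density) formula for $\psi$. The symmetric computation for $\phi\in N$ gives the other identity. The main obstacle is establishing this density equivalence coherently in the enriched $\infty$-categorical setting. I would do so by a cofinality argument in the spirit of \ref{lem:cofinal}: the coend is a colimit over $\Tw(X)^\op$, and the inclusion of the terminal diagonal object is cofinal by Quillen's Theorem A, which identifies the coend with $\psi(y)$. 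Alternatively, since dualizable objects together with their duals are preserved by the base-change monoidal functor $\RMod_\one(\cC)=\cC\to\RMod_\cM(\cC)$ along the unit $\one\to\cM$, and since $\Fun(X,\cM)\simeq\Fun(X,\one)\otimes\cM$, one may reduce the whole verification to the universal case $\cM=\one$, where the claimed duality of $\Fun(X,\one)$ and $\Fun(X^\op,\one)$ is the standard self-duality of presheaf categories.
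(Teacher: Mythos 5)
Your concluding ``alternative'' is essentially the paper's proof, and it is the route you should promote to the main argument. The paper proves Lemma~\ref{lem:Nrightdualizable} by first treating the universal case $\cM=\cS^\cK$ (the unit of $\Cat^\cK$), where $N=P^\cK(X^\op)$ and $M=P^\cK(X)$ and the coevaluation and evaluation are written down directly as the standard self-duality data of a presheaf category (in particular $e$ is the colimit-preserving extension of the Yoneda pairing $X^\op\times X\to\cS$), and then transports the duality to arbitrary $\cM$ by composing dual pairs via Proposition~\ref{prp:composition}, taking $A=B=\cS^\cK$, $C=\cM$ and composing $(P^\cK(X^\op),P^\cK(X))$ with the tautological dual pair $(\cM,\cM)$. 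This is exactly your ``base change along $\one\to\cM$ preserves duals'', together with the identification $\Fun(X,\cM)\simeq P^\cK(X^\op)\otimes\cM$. The point of reducing to the unit is precisely to avoid the work your primary route demands.

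That primary route --- defining $e$ as a coend and $c$ as the $\cM$-valued hom profunctor for general $\cM$ and checking the zigzags by enriched co-Yoneda --- is morally sound but has two concrete gaps as written. First, $c$ must be produced as an object of the relative tensor product $N\otimes_\cM M$, which is only available as the geometric realization of a bar construction (\ref{prp:colimitdescription}); declaring it ``classified by the hom profunctor'' presupposes an identification of $N\otimes_\cM M$ that you have not established, and the ``diagonal family assembled over $X$'' is not quite the right recipe: the hom profunctor is a coend, i.e.\ a colimit over $\Tw(X)$ of $Y(x)\boxtimes Y(y)$, not a colimit over the diagonal copy of $X$. Second, the triangle identity for $M$ factors through $(M\otimes_\one N)\otimes_\cM M$, so its verification again takes place over a bar construction, and your cofinality argument would have to be run at that level, coherently --- this is exactly the ``main obstacle'' you flag yourself. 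Either carry out that extra work, or simply take the reduction to $\cM=\cS^\cK$ as the proof.
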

\begin{proof}
We deduce the duality between $M$ and $N$ from the special case 
$\cM=\cS$. In this case $N=P(X^\op)$, 
$M=P(X)$ and
the duality is given by the maps
$$ c:\cS\to P(X^\op)\otimes P(X)=
P(X^\op\times X)$$
defined as the colimit-preserving map preserving the terminal objects, and
$$ e:P(X)\otimes P(X^\op)\to\cS$$
extending the Yoneda map $X^\op\times X\to\cS$ to preserve colimits.

To get the duality for arbitrary $\cM$, we use 
Proposition~\ref{prp:composition}. We have three associative algebras in $\Cat^L$, 
$A=B=\cS$ and $C=\cM$, the adjoint pair 
$(P(X^\op),P(X))$ that we just constructed, 
and the one defined by $(\cM,\cM)$. The composition of these gives an adjunction for $(N,M)$.
\end{proof}

\subsubsection{}
\label{sss:Q}
Let us now apply Corollary~\ref{crl:morita-semi}
to $A=\cM$ and $N=\Fun(X,\cM)$. We get the right dual module $M=\Fun(X^\op,\cM)$ and the endomorphism ring
$B=\Quiv_X(\cM)$, see~\cite{H.EY}, 4.5.3. We also get a canonical $\cM$-$\cM$-bimodule map
\begin{equation}
\label{eq:eval-quiv}
e:\Fun(X^\op,\cM)\otimes_{\Quiv_X(\cM)}\Fun(X,\cM)\to\cM.
~\footnote{In fact $e$ is an equivalence. We do not use this fact.}
\end{equation}

We would like to comment about the right 
$\Quiv_X(\cM)$-module structure on $\Fun(X^\op,\cM)$. 

According to Remark~\ref{rem:viam}, the reverse monoidal category $\Quiv_X(\cM)^\rev$ identifies with the endomorphism object of the left $\cM$-module $\Fun(X^\op,\cM)$, that is, of the right $\cM^\rev$-module
$\Fun(X^\op,\cM)$. This means that the right 
$\Quiv_X(\cM)$-action on $\Fun(X^\op,\cM)$ is defined by the same construction as the left action on $\Fun(X,\cM)$,
but with $\cM^\rev$ replacing $\cM$ and $X^\op$ replacing $X$.

\
 
Tensoring (\ref{eq:eval-quiv}) with $\cB$ over $\cM$
and using associativity of the relative tensor product ~\ref{sss:asso},
we get a map of left $\cM$-modules
\begin{equation}
\label{eq:eval-quiv-2}
e_\cB:\Fun(X^\op,\cM)\otimes_{\Quiv_X(\cM)}\Fun(X,\cB)\to\cB.
\end{equation}

This gives a $\TEN_\succ$-algebra 
$\cQ=\cQ_{X,\cM,\cB}$ in $\Cat^L$ consisting of the following 
categories and
operations between them described above.
\begin{itemize}
\item Monoidal categories $\cQ_a=\cQ_{a'}=\cM$,$\cQ_b=\Quiv_X(\cM)$,
\item A bimodule category $\cQ_m=\Fun(X^\op,\cM)$, a left
$\Quiv_X(\cM)$-module $\cQ_n=\Fun(X,\cB)$ and a left $\cM$-module
$\cQ_k=\cB$. 
\end{itemize}

\subsubsection{} 
Fix $X\in\Cat$, $(\cM,\cB)\in\Alg_\LM(\Cat^L)$.
We have  a relative tensor product functor
\begin{equation}
\label{eq:rtp-q}
\RT:\Alg_{\TEN_2/\TEN_\succ}(\cQ_{X,\cM,\cB})\to
\Alg_{\TEN_\succ}(\cQ_{X,\cM,\cB}).
\end{equation}

Let $\cA\in\Alg_\Ass(\Quiv_X(\cM))$ be an $\cM$-enriched
precategory with space of objects $X$.

The restriction $\RT_{\one,\cA}$ of (\ref{eq:rtp-q})
is a functor
 
\begin{equation}
\label{eq:rtp-q-2}
\RT_{\one,\cA}:\RMod_\cA(\Fun(X^\op,\cM))\times
\LMod_\cA(\Fun(X,\cB))\to\cB.
\end{equation} 

Taking into account that $\RMod_\cA(\Fun(X^\op,\cM))=
P_\cM(\cA)$ and $\LMod_\cA(\Fun(X,\cB))=\Fun_\cM(\cA,\cB)$,
we finally get the functor called {\sl  weighted colimit},

\begin{equation}
\label{eq:wc}
\colim:P_\cM(\cA)\times\Fun_\cM(\cA,\cB)\to\cB,
\end{equation}
carrying a pair $(W\in P_\cM(\cA),f:\cA\to\cB)$ 
to $\colim_W(f):=f\otimes W\in\cB$. This functor
preserves colimits separately in both arguments, as well
as left $\cM$-tensored structure in the first argument.

In particular, for a fixed $f:\cA\to\cB$ the functor
$\colim(f):P_\cM(\cA)\to\cB$ preserving the colimits and 
the left $\cM$-tensored structure, is defined.

Since the bifunctor (\ref{eq:wc}) is a special case of the
relative tensor product, it defines, using the notation (\ref{ss:funak}),
a canonical functor (that we also denote as $\colim$)
 
\begin{equation}
\label{eq:adjcolim}
\colim:\Fun_\cM(\cA,\cB)\to\Fun_{\LMod_\cM}(P_\cM(\cA),\cB)
\end{equation}

\subsection{Properties of the weighted colimit}

\begin{lem}
\label{lem:colimY}
Let $Y:\cA\to P_\cM(\cA)$ be the Yoneda embedding. Then
$\colim(Y)=\id_{P_\cM(\cA)}$.
\end{lem}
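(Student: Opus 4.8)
The plan is to unwind $\colim(Y)$ as a relative tensor product and to recognize the bar construction computing it as the bar resolution of enriched presheaves built in Section~\ref{sec:bar}. First I would specialize the weighted colimit (\ref{eq:rtp-q-2}) to $\cB=P_\cM(\cA)$ and to the left $\cA$-module $Y\in\LMod_\cA(\Fun(X,P_\cM(\cA)))=\Fun_\cM(\cA,P_\cM(\cA))$ given by the Yoneda embedding. By definition $\colim(Y)$ carries $W\in P_\cM(\cA)=\RMod_\cA(\Fun(X^\op,\cM))$ to $\RT_{\one,\cA}(W,Y)$, the relative tensor product $W\otimes_\cA Y\in P_\cM(\cA)$, so it suffices to produce an equivalence $W\otimes_\cA Y\simeq W$ natural in $W$; the naturality then upgrades the pointwise statement to the asserted equivalence of functors.

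By the colimit description of the relative tensor product (Proposition~\ref{prp:colimitdescription}), $W\otimes_\cA Y$ is the geometric realization of the two-sided bar construction $\BAR(W,\cA,Y)$, whose simplices have the form $W\otimes\cA^{\otimes k}\otimes Y$ with faces given by the right $\cA$-action on $W$, the multiplication of $\cA$, and the left $\cA$-action on $Y$. I would then identify this bar object with the bar resolution of the presheaf $W$ produced in Section~\ref{sec:bar}: the explicit description of $\gamma^{\circ,-}$ sends a tuple $(y,x_n,y_n,\ldots,x_1)$ lying over $[n-1]\in\Delta^\op_+$ to $(W(y),\cA(y_n,x_n),\ldots,\cA(y_2,x_2),Y(x_1))$, i.e. to precisely the simplex $W\otimes\cA^{\otimes(n-1)}\otimes Y$ of $\BAR(W,\cA,Y)$, with matching face and degeneracy maps. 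Hence the operadic colimit diagram $(\LM^{\circ,-}_{X^\op})_{/*}\to(\cM,P_\cM(\cA))$ of Proposition~\ref{prp:barquiv} is exactly the diagram computing $W\otimes_\cA Y$, and that proposition identifies its operadic colimit with $W$ itself. This yields $W\otimes_\cA Y\simeq W$, naturally in $W$, and therefore $\colim(Y)=\id_{P_\cM(\cA)}$.

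The main obstacle is this identification of diagrams: one must match the two presentations of the bar complex as operadic colimit diagrams in the $\LM$-monoidal category $(\cM,P_\cM(\cA))$, compatibly with the whole left $\cM$-module structure and all simplicial structure maps, rather than merely levelwise on underlying objects. This is precisely what the condensation construction and the functor $\Gamma$ of Section~\ref{sec:bar} were built to supply, while the reduction Proposition~\ref{prp:rt-reduction} is what lets one pass between the $\TENS_\succ$-presentation of the relative tensor product defining $\colim$ and the reduced $\TEN_\succ$-presentation underlying that bar resolution. As an internal consistency check, the expected identity $\colim(Y)\circ Y\simeq Y$ shows that $\colim(Y)$ restricts to the inclusion $\ol\cA\hookrightarrow P_\cM(\cA)$ on representables, exactly as $\id_{P_\cM(\cA)}$ does.
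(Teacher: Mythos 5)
Your route is genuinely different from the paper's, and it has a real gap at its central step. The paper never touches the bar resolution of Section~\ref{sec:bar} in this proof (that machinery is reserved for Proposition~\ref{prp:olke}). Instead it introduces a $\TENS_\succ$-monoidal category $\cF$ whose $n$-component is $\Quiv_X(\cM)$ with the \emph{unit} bimodule structure, observes that relative tensor product with the $\cA$-$\cA$-bimodule $\cA$ is the identity on $\RMod_\cA(\Fun(X^\op,\cM))=P_\cM(\cA)$ (unitality), and then uses the reduction procedure of~\ref{ss:reduction} to identify $\cF^\red_\cA$ with $\cQ_{X,\cM,P_\cM(\cA)}$, under which the unit bimodule $\cA$ becomes precisely the Yoneda module $Y\in\Fun_\cM(\cA,P_\cM(\cA))$. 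The entire content is thus ``$Y$ is the unit bimodule seen through reduction''; no bar complex is ever computed, and Proposition~\ref{prp:rt-reduction} is what transports unitality across the reduction.

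The gap in your argument is the identification of diagrams that you yourself flag, and the tools you invoke do not supply it. The diagram of Proposition~\ref{prp:barquiv} is indexed by $(\LM^{\circ,-}_{X^\op})_{/*}$, whose fiber over $[n-1]$ consists of \emph{all} tuples $(y,x_n,y_n,\ldots,x_1)$; its value at one such tuple is the object $(W(y),\cA(y_n,x_n),\ldots,Y(x_1))$ of $\cM^{n-1}\times P_\cM(\cA)$, which is \emph{not} the simplex $W\otimes\cA^{\otimes(n-1)}\otimes Y$ of $\BAR(W,\cA,Y)$ --- the latter arises only after the operadic pushforward and a coend over the whole fiber, since the $\Quiv_X(\cM)$-actions in $\cQ$ are themselves colimits over $X$. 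So you must (i) justify computing the operadic colimit over $K$ by first left Kan extending along $K\to\Delta^\op$, and (ii) identify the resulting simplicial object, with all its structure maps and its $\cM$-linearity, with the two-sided bar construction of $(W,\cA,Y)$ in the $\TEN_\succ$-monoidal category $\cQ_{X,\cM,P_\cM(\cA)}$ --- in particular matching the left $\cA$-action on $Y$ with the action on the $Y(x_1)$-slot coming from the bimodule structure of $\cA$. Neither condensation nor $\Gamma$ does this: Section~\ref{sec:bar} only lifts the bar resolution of $W$ from $\Fun(X^\op,\cM)$ to $P_\cM(\cA)$, and never compares it with the bar complex of a relative tensor product in $\cQ$; and Proposition~\ref{prp:rt-reduction} relates the $\TENS_\succ$- and $\TEN_\succ$-presentations of a single relative tensor product, which is not the comparison you need. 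Supplying (i) and (ii) honestly would amount to redoing, in less convenient form, exactly the identification that the paper's reduction argument handles formally.
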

\begin{proof}
Look at the $\TENS_\succ$-monoidal category $\cF$ in 
$\Cat^L$ having the following components.
\begin{itemize}
\item Monoidal categories $\cF_a=\cF_{a'}=\cM$,
$\cF_b=\cF_c=\cF_{c'}=\Quiv_X(\cM)$, 
\item $\cF_m=\cF_k=\Fun(X^\op,\cM)$, $\cF_n=\Quiv_X(\cM)$,
\end{itemize}
with the standard $\cM$-$\Quiv_X(\cM)$-bimodule structure
on $\Fun(X^\op,\cM)$ and the unit 
$\Quiv_X(\cM)$-$\Quiv_X(\cM)$-bimodule structure
on $\Quiv_X(\cM)$.

We will study the relative tensor product defined by $\cF$. Let $\cA$ be an associative algebra in $\Quiv_X(\cM)$.
The relative tensor product with $\cA$-$\cA$-bimodule 
$\cA$ defines the identity functor on
$\RMod_\cA(\Fun(X^\op,\cM))=P_\cM(\cA)$.

We will now show that the calculation of $\colim(Y)$ has the same answer.

We apply the reduction procedure described in \ref{ss:reduction}. 
Let $\cF'=\cF^\red_\cA$  be a $\TEN_\succ$-monoidal category 
obtained from $\cF$ by reduction, see \ref{sss:red-1} and \ref{rem:red-c}.
 It has the following components.

\begin{itemize}
\item Monoidal categories $\cF'_a=\cF'_{a'}=\cM$,
$\cF'_b=\Quiv_X(\cM)$.
\item $\cF'_m=\Fun(X^\op,\cM)$, 
$\cF'_n=\RMod_\cA(\Quiv_X(\cM))$,
$\cF'_k=\RMod_\cA(\Fun(X^\op,\cM))$.
\end{itemize}

The category $\Fun(X,\Fun(X^\op,\cM))$ has a structure of
$\Quiv_X(\cM)$--$\Quiv_X(\cM)$-bimodule described by
the $\BM$-monoidal category
$$\Quiv^\BM_X(\Quiv^\BM_{X^\op}(\cM^\rev)^\rev),$$
as in~\cite{H.EY}, 6.1.7. This bimodule identifies with 
$\Quiv_X(\cM)$. This implies that $\cF'_n$ as a left $\Quiv_X(\cM)$-module identifies with $\Fun(X,P_\cM(\cA))$, so that
$\cF'$ is equivalent to the category 
$\cQ_{X,\cM,\cB}$ with $\cB=P_\cM(\cA)$.

\end{proof}

\

The construction $(X,\cM,\cB)\mapsto\cQ_{X,\cM,\cB}$ described in
\ref{sss:Q} is functorial in $(X,\cM,\cB)\in\Cat\times\Alg_\LM(\Cat^L)$. The precise expression of this functoriality is given in Section~\ref{sec:functorialityQ}. We now need only a small (and obvious) fragment of it.

\begin{lem}
A map $g:\cB\to\cB'$ of $\cM$-modules in  $\Cat^L$ induces
a $\TEN_\succ$-monoidal colimit-preserving functor 
$\cQ_{X,\cM,\cB}\to\cQ_{X,\cM,\cB'}$.
\end{lem}\qed

By Proposition~\ref{prp:colimitdescription} this induces a map preserving weighted colimits.
This implies that the colimit functor defined by formula(\ref{eq:wc}) is functorial  in $\cB$.
\begin{crl}
For $g:\cB\to\cB'$ an arrow in $\LMod_\cM$,
the following  diagram
\begin{equation}
\xymatrixcolsep{5pc}\xymatrix{
&{P_\cM(\cA)\times\Fun_\cM(\cA,\cB)}\ar^{\ \ \ \ \ \colim}[r]\ar[d]&{\cB}
\ar[d]\\
&{P_\cM(\cA)\times\Fun_\cM(\cA,\cB')}\ar^{\ \ \ \ \ \ \colim}[r]&{\cB'}
},
\end{equation} 
with the vertical arrows defined by $g$,
is commutative. 
\end{crl}\qed

\begin{crl}
\label{crl:colim-ext}
Let $\cA$ be a $\cM$-enriched category and let $\cB$ 
be a left $\cM$-modules with colimits. For any 
colimit-preserving map $F:P_\cM(\cA)\to\cB$ of left $\cM$-modules 
there is a natural equivalence 
$$F\stackrel{\sim}{\to}\colim(F\circ Y).$$
\end{crl}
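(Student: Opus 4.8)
The plan is to deduce the corollary formally from the two lemmas that immediately precede it: the computation $\colim(Y)=\id_{P_\cM(\cA)}$ and the functoriality of the weighted colimit in its target variable $\cB$. All of the substantial work (the bar resolution of Section~\ref{sec:bar} and the reduction procedure of~\ref{ss:reduction}) has already been spent establishing $\colim(Y)=\id$, so the corollary should come out as a short diagram chase.

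First I would note that the hypotheses on $F$ say exactly that $F:P_\cM(\cA)\to\cB$ is a colimit-preserving morphism of left $\cM$-modules, i.e.\ an arrow in $\LMod_\cM(\Cat^\cK)$. This is precisely the class of arrows to which the functoriality lemma applies. I would therefore specialize that lemma to $g:=F$, taking its source to be $P_\cM(\cA)$ and its target to be $\cB$, and then evaluate the resulting commutative square on the Yoneda embedding regarded as an object $Y\in\Fun_\cM(\cA,P_\cM(\cA))$.

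Tracing a pair $(W,Y)$ through the square, the top-then-right route sends it to $F(\colim_W(Y))$ while the left-then-bottom route sends it to $\colim_W(F\circ Y)$, so commutativity gives a natural equivalence
\[
F\circ\colim(Y)\ \xrightarrow{\ \sim\ }\ \colim(F\circ Y)
\]
of colimit-preserving $\cM$-module maps $P_\cM(\cA)\to\cB$. Substituting $\colim(Y)=\id_{P_\cM(\cA)}$ from the preceding lemma identifies the left-hand side with $F$, which yields the asserted equivalence $F\xrightarrow{\sim}\colim(F\circ Y)$.

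The only thing to watch is that the equivalence $\colim(Y)\simeq\id$ and the functoriality square are both used at the level of morphisms in $\LMod_\cM(\Cat^\cK)$, rather than merely objectwise, so that precomposing $F$ with $\colim(Y)$ is legitimate and the comparison $2$-cell genuinely lives in $\Fun_\cM^\cK(P_\cM(\cA),\cB)$; since the two lemmas are already formulated in these terms, this coherence is automatic. Accordingly I expect no real obstacle here: the essential difficulty was discharged in proving $\colim(Y)=\id$, and the corollary is a purely formal consequence.
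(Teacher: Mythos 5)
Your proof is correct and is exactly the argument the paper intends: the corollary is stated without a separate proof precisely because it follows by applying the functoriality square with $g:=F$ to the object $Y\in\Fun_\cM(\cA,P_\cM(\cA))$ and then invoking $\colim(Y)=\id_{P_\cM(\cA)}$. No issues.
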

\begin{proof}
Follows from \ref{lem:colimY} and \ref{crl:colim-ext}.
\end{proof}

\subsection{Universality}

We define the map
\begin{equation}
\label{eq:y*}
Y^*:\Fun_\cM^L(P_\cM(\cA),\cB)\to
\Fun_\cM(\cA,\cB)
\end{equation}
as the composition with the Yoneda embedding $Y:\cA\to P_\cM(\cA)$.

In this subsection we will show that $Y^*$ is an equivalence.  

\subsubsection{}
The weighted colimit defines a map (\ref{eq:adjcolim}) in the opposite 
direction. According to Corollary~\ref{crl:colim-ext}, the composition
$\colim\circ Y^*$ is equivalent to identity. 

We deduce that the other composition is also equivalent to
identity providing an interpretation of the weighted colimit
as an operadic left Kan extension.

Recall \ref{ss:babyY} that $\bar\cA\subset P_\cM(\cA)$ is
the full subcategory spanned by the representable functors.

In what follows we denote
$$ P=P_\cM(\cA),\  \fP=(\cM,P)\in\Alg_\LM(\Cat^L),\ 
\fA=(\cM,\bar\cA)\subset\fP,
\fB=(\cM,\cB).
$$
Thus,  $\fA$ is  an $\LM$-suboperad
of $\fP$. For $F\in\Fun_{\LMod_\cM}(P,\cB)$
we will denote by the same letter $F:\fP\to\fB$ the corresponding 
$\LM$-monoidal functor.

We claim the following.

\begin{prp}
\label{prp:olke}
Any $F\in\Fun_{\LMod_\cM}(P,\cB)$
is an operadic left Kan extension of $\bar F=F|_{\fA}$.
\end{prp}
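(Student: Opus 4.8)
The plan is to verify directly the defining condition of an operadic left Kan extension recalled in \ref{ss:OLKE}. Write $i:\fA\to\fP$ for the inclusion of $\LM$-operads and regard $F$ as an $\fP$-algebra in the $\LM$-monoidal category $(\cM,\cB)$ lying over $\id_\cM$ (identity on $\cM$, the given functor on the module color); then $\bar F=i^*F$. One must check that for every color $q$ of $\fP$ the canonical diagram $\alpha_q^\triangleright:K_q^\triangleright\to(\cM,\cB)^\act$, with $K_q=\fA\times_\fP\fP^\act_{/q}$ and cone value $F(q)$, is an operadic colimit diagram. For $q$ an object of $\cM$ (a color of type $a$) this is automatic: since $\fA$ and $\fP$ have the same $a$-component $\cM$ and $F$ restricts to the identity there, $K_q$ is the category of active arrows $m_1\otimes\cdots\otimes m_k\to q$ in $\cM$, which has the terminal object $\id_q$, and $\alpha_q^\triangleright$ is tautologically an operadic colimit diagram. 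So everything reduces to the colors $q=f\in P$ of type $m$.

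Fix such an $f$. Here $K_f=\fA\times_\fP\fP^\act_{/f}$ has as objects the tuples $(m_1,\ldots,m_n,Y(z);\beta)$ with $m_i\in\cM$, $Y(z)\in\bar\cA$ a representable presheaf, and $\beta$ an active arrow $m_1\otimes\cdots\otimes m_n\otimes Y(z)\to f$ in $P$; the functor $\alpha_f$ carries such an object to $(m_1,\ldots,m_n,\bar F(Y(z)))\in(\cM,\cB)^\act$, and $\alpha_f^\triangleright$ has cone value $F(f)$. I would produce the required operadic colimit from the bar resolution of Section \ref{sec:bar}. By Proposition \ref{prp:barquiv} the functor $\Gamma$ defines an operadic colimit diagram $(\LM^{\circ,-}_{X^\op})_{/*}\to(\cM,P)$ with cone value $f$; writing $K=\Delta^\op\times_{\Delta^\op_+}(\LM^{\circ,-}_{X^\op})_{/*}$, so that the source is $K^\triangleright$, one reads off from the construction that this diagram takes values in the suboperad $\fA$ (its module entry is the representable $Y(x_1)$ and its tensor entries $f(y),\cA(y_n,x_n),\ldots,\cA(y_2,x_2)$ lie in $\cM$). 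Since $F$ is $\cM$-linear and preserves colimits, the $\LM$-monoidal functor $(\cM,P)\to(\cM,\cB)$ it determines preserves operadic colimits, so $F(f)$ is the operadic colimit over $K$ of $F\circ\Gamma|_K$, a diagram whose module entries are $\bar F(Y(x_1))$.

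It then remains to compare the two operadic colimits. I would define a functor $\kappa:K\to K_f$ sending the bar object $(y,x_n,y_n,\ldots,x_1)$ to $(f(y),\cA(y_n,x_n),\ldots,\cA(y_2,x_2),Y(x_1))$ equipped with the tautological augmentation to $f$ coming from the $\cA^\op$-module structure; by construction $\alpha_f\circ\kappa\simeq F\circ\Gamma|_K$. The heart of the argument is to show that $\kappa$ is cofinal, via Quillen's Theorem A \cite{L.T}, 4.1.3.1: for every $w=(m_1,\ldots,m_n,Y(z);\beta)\in K_f$ the comma category $K\times_{K_f}(K_f)_{w/}$ has a terminal object. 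Using the enriched Yoneda lemma \cite{H.EY}, Sect.~6, the arrow $\beta$ is the same datum as a map $m_1\otimes\cdots\otimes m_n\to f(z)$ in $\cM$, and the terminal object should be the length-one bar object $(z,z)\mapsto(f(z),Y(z))$ carrying the tautological map $f(z)\otimes Y(z)\to f$, exactly as in the cofinality statement \ref{lem:cofinal} and its companion \ref{sss:taux}. Granting cofinality, the operadic colimit over $K_f$ of $\alpha_f$ agrees with the operadic colimit over $K$ of $\alpha_f\circ\kappa\simeq F\circ\Gamma|_K$, which is $F(f)$; hence $\alpha_f^\triangleright$ is an operadic colimit diagram, and since $f$ was arbitrary this proves that $F$ is the operadic left Kan extension of $\bar F$.

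I expect the cofinality of $\kappa$ to be the main obstacle. Unlike in \ref{lem:cofinal}, where the source was the single category $\Tw(X)^\op$, here the source $K$ is the full (``thick'') bar indexing category carrying the whole simplicial direction, so one must check that the proposed length-one terminal object genuinely absorbs the higher-simplex data of $K$ in a unique way; the verification should again be combinatorial and rest on the Yoneda identification of active arrows into $f$ with maps $\otimes m_i\to f(z)$. A secondary point requiring care is the assertion that $F$ preserves operadic (not merely ordinary) colimits, which I would justify from its $\LM$-monoidality together with the compatibility of the tensor structures with colimits, in the spirit of Proposition \ref{prp:colimitdescription}.
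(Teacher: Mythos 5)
Your overall strategy --- reduce to the module colors, invoke the bar resolution of Proposition~\ref{prp:barquiv}, transport it along a functor $\kappa:K\to K_f$ (the paper's $\tau:K\to\cF_f$), and conclude by cofinality via Quillen's Theorem~A --- is exactly the paper's. The preliminary reductions differ in inessential ways: the paper reduces at once to $\cB=P$ and $F=\id_P$, whereas you keep a general $F$ and argue that it preserves operadic colimits; both routes are sound.

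The genuine gap is precisely at the point you yourself flag as the main obstacle: the comma category $K\times_{K_f}(K_f)_{w/}$ does \emph{not} have a terminal object, so the argument of Lemma~\ref{lem:cofinal} does not transfer. For an object $t$ of the comma category lying over $[k-1]\in\Delta^\op$, a morphism to your candidate $t_\phi$ (which lies over $[0]$) would have to live over a map $[k-1]\to[0]$ in $\Delta^\op$ compatible with all the labels and augmentations, and no canonical such map exists: the simplicial (bar) direction of $K$ cannot be collapsed onto the length-one object by a single arrow. What the paper actually proves is weaker but sufficient for Theorem~A: the comma category $K_\phi$ is weakly contractible, which is shown by producing, functorially in $t$, a span $t\leftarrow t'\to t_\phi$ --- a $p$-product diagram over $[k-1]\to[k]\leftarrow[0]$, where $t'$ is obtained from $t$ by appending the vertex pair $(z,z)$ together with the unit $\one\to\cA(z,z)$ --- and assembling these spans into a functor $K_\phi\to\Fun(\Lambda^2_0,K_\phi)$ that gives a null-homotopy of the identity. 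To complete your proof you must replace the terminal-object claim by this (or an equivalent) contraction argument; as written, the cofinality step fails.
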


\begin{prp}
\label{prp:olke-2}
Let $\bar F:\fA\to\fB$ be a map of $\LM$-operads. Then
the map $F:\fP\to\fB$ defined as an operadic left Kan extension 
of $\bar F$ preserves operadic colimits.
\end{prp}

\begin{thm}
\label{thm:universal}
The map (\ref{eq:y*}) is an equivalence.
\end{thm}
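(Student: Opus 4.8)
The plan is to exhibit the weighted-colimit functor $\colim$ of~(\ref{eq:adjcolim}) as a two-sided quasi-inverse to $Y^*$, so that it remains only to check that the two round-trip compositions are equivalent to the respective identities; both checks have in essence already been carried out in the two statements preceding the theorem, and the proof merely assembles them.

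First I would record the easy composition. For a fixed $\cM$-functor $f:\cA\to\cB$ the weighted colimit $\colim(f):P_\cM(\cA)\to\cB$ is colimit-preserving and $\cM$-linear, and its restriction along the Yoneda embedding satisfies $Y^*(\colim(f))=\colim(f)\circ Y\simeq f$. This rests only on the Lemma computing $\colim(Y)=\id_{P_\cM(\cA)}$ together with the functoriality of $\colim$ in $\cB$, and it shows that $Y^*\circ\colim$ is naturally equivalent to the identity of $\Fun_\cM(\cA,\cB)$.

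The substantive composition is $\colim\circ Y^*$. Given a colimit-preserving $\cM$-linear functor $F:P_\cM(\cA)\to\cB$, I would set $f=Y^*(F)=F\circ Y$ and $F'=\colim(f)$, and argue that both $F$ and $F'$ are operadic left Kan extensions, along the inclusion $\fA=(\cM,\bar\cA)\hookrightarrow\fP=(\cM,P_\cM(\cA))$, of their common restriction $\bar F=F|_{\bar\cA}$: for $F$ this is exactly Proposition~\ref{prp:olke}, while for $F'=\colim(f)$ it follows from the realization of the weighted colimit as a relative tensor product described by the same bar resolution. Because operadic left Kan extension is determined by a universal property (left adjoint to restriction, see~\ref{ss:OLKE}), any two such extensions of $\bar F$ are canonically equivalent, whence $F\simeq F'=(\colim\circ Y^*)(F)$ naturally in $F$, so that $\colim\circ Y^*\simeq\id$ on $\Fun_\cM^\cK(P_\cM(\cA),\cB)$.

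Combining the two equivalences shows that $Y^*$ and $\colim$ are mutually quasi-inverse, and therefore $Y^*$ is an equivalence. The genuine difficulty is concentrated in Proposition~\ref{prp:olke}, which I expect to be the main obstacle: identifying an arbitrary $f\in P_\cM(\cA)$ with an operadic colimit indexed by $(\LM^{\circ,-}_{X^\op})_{/*}$ via the bar-resolution machinery of Section~\ref{sec:bar} (Proposition~\ref{prp:barquiv}), and then proving cofinality of the comparison functor $\tau:K\to\cF_f$ by a null-homotopy argument feeding into Quillen's Theorem~A. Granting that input, the theorem follows immediately by assembling the two compositions.
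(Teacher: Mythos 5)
Your proposal is correct and follows essentially the same route as the paper: the composition $Y^*\circ\colim\simeq\id$ is Corollary~\ref{crl:colim-ext}, and for $\colim\circ Y^*\simeq\id$ the paper likewise applies Proposition~\ref{prp:olke} to both $F$ and $F'=\colim(Y^*F)$ and invokes uniqueness of operadic left Kan extensions. The only cosmetic difference is that the paper applies Proposition~\ref{prp:olke} uniformly to $F'$ as well (since $F'$ is itself a colimit-preserving $\cM$-linear functor), rather than rerunning the bar-resolution argument for it.
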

\begin{proof}
Let $\bar f:\cA\to\cB$ be a morphism in $\LMod^w_\cM$ defined by 
$f:\cA\to\cB$. By \ref{prp:olke-2}, its operadic left Kan extension
$F:\fP\to\fB$ preserves operadic colimits, and, therefore,
defines an arrow in $\LMod_\cM$. Therefore, by \ref{prp:olke},
$F=\colim(f)$. This implies that the composition $Y^*\circ\colim$
is equivalent to identity.
\end{proof}

The rest of this subsection is devoted to proving \ref{prp:olke}
and \ref{prp:olke-2}.

\subsubsection{Proof of Proposition~\ref{prp:olke}}
Let $f\in P$. Denote
$\cF_f=\fA\times_\fP\fP^\act_{/f}$.

We have to verify that the composition
$\cF_f\to\fA\stackrel{\bar F}{\to}\cB$ extends to 
an operadic colimit diagram
$$\cF_f^\triangleright\to\cB$$
carrying the terminal object of $\cF_f^\triangleright$
to $F(f)$.

The general case is immediately reduced to the case
$\cB=P$ and $F=\id_P$. Thus, we have to verify that
any $f\in P$ is an operadic colimit of the functor 
$\bar y:\cF_f\to\fP$ defined as the composition of the projection $\cF_f\to\fA$ with the embedding to $\fP$.

The plan of our proof is as follows.

Following to \ref{prp:barquiv} , the pair $\gamma=(\cA^\op,f)$ gives rise to a factorization of the functor
$\gamma^{\circ,-}$, see~(\ref{eq:gamma-circ-dash}),
\begin{equation}
\LM^{\circ,-}_{X^\op}\stackrel{\Gamma}{\to}
\fP\stackrel{G}{\to}(\cM,\Fun(X^\op,\cM))
\end{equation}
through the forgetful functor $G$, so that $\Gamma(*)=f$
where $*$ is a (unique) object of $\LM_{X^\op}^{\circ,-}$
over $[-1]\in\Delta^\op_+$. The restriction 
of $\Gamma$ to $\Delta^\op\times_{\Delta^\op_+}\LM^{\circ,-}_{X^\op}$ factors through the full embedding 
$\fA\to\fP$. 

Recall that
$$
K=
\Delta^\op\times_{\Delta^\op_+}
(\LM^{\circ,-}_{X^\op})_{/*}
$$
induces an equivalence
 
$$
K^\triangleright=(\LM^{\circ,-}_{X^\op})_{/*}.
$$

This yields a functor

\begin{equation}
\label{eq:toFf}
\tau:K\to(\LM^{\circ,-}_{X^\op})_{/*}\to\cF_f
\end{equation}
and an operadic colimit diagram
(by Proposition~\ref{prp:barquiv})
\begin{equation}
\Gamma:K^\triangleright\to\fP
\end{equation}
extending the composition of (\ref{eq:toFf}) with the
projection $\cF_f\to\fP$.

It remains to
verify that $\tau$ is cofinal.

\subsubsection{Cofinality of $\tau$}

To prove that $\tau$ is cofinal, we use Quillen's Theorem A
in the form of \cite{L.T}, 4.1.3.1. We have to
verify that, for any $\phi\in\cF_f$, the comma category
$$K_\phi=K\times_{\cF_f}(\cF_f)_{\phi/}$$
is weakly contractible.

The proof goes as follows. In \ref{sss:dphi1} below
we present an object
$t_\phi$ of $K_\phi$ and a functor $F:K_\phi\to
\Fun(\Lambda^2_0, K_\phi)$ whose evaluation at $1\in\Lambda^2_0$
is $\id_{K_\phi}$ and at $2\in\Lambda^2_0$ 
is the composition $K_\phi\to\{t_\phi\}\hookrightarrow K_\phi$.
The functor $F$ provides a null-homotopy for the identity
map on $K_\phi$. This proves cofinality of $\tau$ and, finally,
Proposition~\ref{prp:olke}.

\subsubsection{}
\label{sss:dphi0}
We denote by $p:K\to\Delta^\op$ the obvious projection. We also denote $p:K_\phi\to\Delta^\op$ the composition  
 $K_\phi\to K\to\Delta^\op$. The functor 
$F:K_\phi\to\Fun(\Lambda^2_0,K_\phi)$ will be defined as 
the one assigning to $t\in K_\phi$ a $p$-product diagram
$t\leftarrow t'\to t_\phi$ in the sense of \cite{L.T}, 4.3.1.1.
The latter means that for any $x\in K_\phi$ the diagram
$$
\xymatrix{
&\Map_{K_\phi}(x,t')\ar[r]\ar[d]&\Map_{K_\phi}(x,t)\times\Map_{K_\phi}(x,t_\phi)\ar[d]\\
&\Map_{\Delta^\op}(p(x),p(t'))\ar[r]&\Map_{\Delta^\op}(p(x),p(t))\times\Map_{\Delta^\op}(p(x),p(t_\phi))
}
$$
is cartesian. To construct $F$, we first construct
a functor $\bar F:\Delta^\op\to\Fun(\Lambda^2_0,\Delta^\op)$
(this is very easy), and then prove that for any $t$ there exists
a $p$-product diagram $t\leftarrow t'\to t_\phi$ whose image under $p$
is $\bar F(p(t))$. By the uniqueness of relative product diagrams,
the functor $\bar F$ lifts to a functor
$F:K_\phi\to\Fun(\Lambda^2_0,K_\phi)$.
 
\subsubsection{}
\label{sss:dphi1}
We define $q:\cF_f\to\Delta^\op_+$ so that the composition
$u_+\circ\op\circ q$ is the projection to $\cF_f\to\fP^\act_{/f}
\to\LM$. This condition uniquely determines $q$ as any object in 
$\fP^\act_{/f}$ has image in $u_+(\Delta^\op_+)$. The definition
of $q$ is chosen so that for $t\in K$ the equality $p(t)=q(\tau(k))$ holds.

Let $\phi\in\cF_f$ be defined by the collection
$(m_1,\ldots,m_n,z,\beta)$ with $m_i\in\cM$, $z\in X^\op$ and  $\beta:(m_1,\ldots,m_n,z)\to f$ defined by a map
$\otimes m_i\to f(z)$ (we will also denote it by $\beta$). 

Given  $d\in K$ defined by a sequence
$(y,x_k,y_k,\ldots,x_2,y_2,x_1)$ together with arrows $\alpha_i:
x_i\to y_{i+1}$ ($\alpha_k:x_k\to y$), an object
$t:\phi\to\tau(d)$ of $K_\phi$ is defined by a collection of maps 
\begin{eqnarray}
\label{eq:mtotau}
\nonumber\otimes_{i-1}^{r_k} m_i \to f(y);\quad\quad\quad\quad\quad\quad
\quad\quad\ \ \\
\otimes_{i=r_j+1}^{r_{j-1}} m_i \to \cA(y_j,x_j), j=k,\ldots,2;\\
\nonumber\otimes_{i=r_1+1}^{n} m_i \to\cA(z,x_1),
\quad\quad\quad\quad\quad\quad 
\end{eqnarray}
for a certain sequence of numbers $1\leq r_k\leq\ldots\leq r_1\leq n$ defining the arrow $a^nm\to a^km$  in $\LM$ that is the image of
$t$.

We define an object $t_\phi$ of $K_\phi$
by the arrow $t_\phi:\phi\to\tau(d_\phi)$ where
$d_\phi=(z,z,\id_z)\in K$, so that $\tau(d_\phi)$ is given by
$(f(z),z,\id_{f(z)})$, and $t_\phi:\phi\to\tau(d_\phi)$,
an arrow in $\cF_f$ over the map $a^nm\to am$ induced by $a^n\to a$
in $\LM$, that is the identity on $z$ and 
$\beta:\otimes m_i\to f(z)$ on the $a$-component.

Note that $q(t_\phi)$ is the map 
$\langle n-1\rangle\to\langle 0\rangle$ corresponding to 
$\{0\}\in[n-1]$.

We can now define $\bar F:\Delta^\op\to\Fun(\Lambda^2_0,\Delta^\op)$. Its opposite carries $[k-1]\in\Delta$ to the diagram $[k-1]\to[k]
\leftarrow[0]$ in $\Delta$ with the arrows $\partial^0:[k-1]\to[k]$ and $[0]\to\{0\}\in[k]$.

We claim that for any $t\in K_\phi$ with $p(t)=\langle k-1\rangle$,
there is a $p$-product diagram $t\leftarrow t'\to t_\phi$
over $\bar F(\langle k\rangle)$.

We will now define an arrow $d'\to d$ in $K$ with a decomposition 
of $t:\phi\to\tau(d)$ via $\tau(d')\to\tau(d)$,
as well as a map $d'\to d_\phi$ decomposing $t_\phi$.

We put
$d'=(y,x_k,y_k,\ldots,x_1,z,z)$ together with $\alpha_i:x_i\to y_{i+1}$ and $\id_z:z\to z$. We have
$\tau(d')=(f(y),\cA(y_k,x_k),\ldots,\cA(y_2,x_2),\cA(z,x_1),z)$,
so that the collection of maps (\ref{eq:mtotau}), together with
the unit $\one\to\cA(z,z)$, yields a map that we denote $t':\phi
\to\tau(d')$.
 
The arrow $d'\to d$ in $K$ is given by the commutative diagram
\begin{equation}
\xymatrix{
&\stackrel{y}{\bullet} &\stackrel{x_k}{\circ}\ar_{\alpha_k}[l]
\ar[d]&\stackrel{y_k}{\bullet}&\ldots\ar_{\alpha_{k-1}}[l] &\stackrel{x_1}{\circ}\ar_{\alpha_1}[l]
\ar[d]&\stackrel{z}{\bullet}
& \stackrel{z}{\circ}\ar[l]\\
&\stackrel{y}{\bullet} \ar[u]&\stackrel{x_k}{\circ}\ar_{\alpha_k}[l]&\stackrel{y_k}{\bullet}\ar[u]&\ldots\ar_{\alpha_{k-1}}[l] &\stackrel{x_1}{\circ}\ar_{\alpha_1}[l]& & 
}
\end{equation}
where all unnamed arrows appearing in the diagram are the identity maps.  
The arrow $d'\to d_\phi$ is given by the commutative diagram
\begin{equation}
\xymatrix{
&\stackrel{y}{\bullet} &\stackrel{x_k}{\circ}\ar_{\alpha_k}[l]&\stackrel{y_k}{\bullet}&\ldots\ar_{\alpha_{k-1}}[l] &\stackrel{x_1}{\circ}\ar_{\alpha_1}[l]&\stackrel{z}{\bullet}
& \stackrel{z}{\circ}\ar[l]\ar[d]\\
& & & & & &\stackrel{z}{\bullet} \ar[u]&\stackrel{z}{\circ}\ar[l]
}
\end{equation}
where, once more, all unnamed  arrows are the identity maps.

Thus, for a fixed map $p(d'')\to p(d')$ in $\Delta^\op$, its
lifting in $d''\to d'$ in $K$ is described by the same collection of data as a pair of maps $d''\leftarrow d$
and $d''\to d_\phi$. Therefore, the diagram 
$d\leftarrow d'\to d_\phi$ is a $p$-product diagram in $K$.
For the same reason the diagram
$t\leftarrow t'\to t_\phi$ is a $p$-product diagram.

This proves the cofinality of $\tau:K\to\cF_f$, and, therefore,
Proposition~\ref{prp:olke}.

\subsubsection{Proof of Proposition~\ref{prp:olke-2}}

We prove the claim in two steps. 

1. The first step is to prove that 
$F:\fP\to\fB$ is a map of $\LM$-monoidal categories, that is, 
that it preserves cocartesian arrows.

Let $X=m\oplus x\in\cM\times P=\fP_{lm}$ and let $y=m\otimes x\in P$.
We want to show that $F$ carries the cocartesian arrow $\alpha:X\to y$
to a cocartesian arrow in $\fB$. By definition of $F$,
$F(x)$ is the operadic colimit of the composition
$$
\fA^\act_{/x}\to\fA\stackrel{f}{\to}\fB,
$$
that is $m\oplus F(x)$ is the operadic colimit of the composition
$$
\fA^\act_{/x}\to\fA\stackrel{m\oplus\_}{\to}\fA
\stackrel{f}{\to}\fB.
$$
One has an equivalence 
$\fA^\act_{/X}\to\fA^\act_{/m}\times\fA^\act_{/x}=
\cM^\act_{/m}\times\fA^\act_{/x}$
(see Lemma~\ref{lem:dec} below) defined by the decomposition 
$X=m\oplus x$, so that the map 
$\fA_{/x}\stackrel{m\oplus\_}{\to}\fA_{/X}$ 
is cofinal. 

This implies that $m\oplus F(x)$ is the operadic colimit of
the composition 
$$
\fA^\act_{/X}\to\fA\stackrel{f}{\to}\fB,
$$
that implies that the arrow $F(X)\to F(y)$ is cocartesian.

2. Let $\bar p:K^\triangleright\to\fP^\act$ be an operadic colimit
diagram. We keep in mind that $\fP$ is a $\LM$-monoidal category 
with colimits, so operadic colimits can be expressed in terms of 
colimits. Since we already know that $F$ preserves cocartesian arrows,
it is sufficient to verify the claim in the case $p$ factors through
$P\subset\cP^\act$. Thus, from now on we can assume that 
$\bar p:K^\triangleright\to P$, is a colimit diagram; we have to verify
that its composition with $F$ remains a colimit diagram in $\cB$.
Here we follow the proof of the similar claim in~\cite{L.T}, 5.1.5.5.

For any $p:K\to P$ 
denote $\cE(p)=\fA^\act\times_{\fP^\act}\fP^{\act[1]}\times_{\fP^\act}K$. We have two projections
$s_p:\cE(p)\to\fA^\act$ and $d_p:\cE(p)\to K$. One has a canonical map 
$u_p:Y\circ s_p\to p\circ d_p$ induced by the projection 
$\cE(p)\to \fP^{\act[1]}$.
We assert that the composition  $K\stackrel{p}{\to} P\stackrel{F}{\to}\fB^\act$ 
is a left Kan extension of the composition $f\circ s_p:\cE(p)\to\fA^\act\to\fB^\act$ along $d$.
In fact, the map 
$d_p:\cE(p)\to K$ is a cocartesian fibration so, by \cite{L.T}, 4.3.3.10,
applied to $d_p:\cE(p)\to K$ over $K$,  the assertion can be verified 
fiberwise for each $x\in K$ where it follows from the definition of
$F$ as the left Kan extension of $f$. This establishes a canonical equivalence $\colim(F\circ p)=\colim(f\circ s_p)$.

Note that,  by~\ref{prp:olke}, $\id_\fP$ is an operadic left Kan 
extension of $Y:\fA\to\fP$, so everything we 
said above is valid, in particular, for $F=\id_\fP$.

Let $\bar p:K^\triangleright\to P$ be a colimit diagram and $p=\bar p|_K$. We have to prove that $F\circ\bar p$ is a colimit diagram in $\cB_m$. In what follows we denote $\cE=\cE(p)$ and $\bar\cE=\cE(\bar p)$,
as well as $s=s_p$, $\bar s=s_{\bar p}$, $d=d_p$, $\bar d=d_{\bar p}$,
$u=u_p$ and $\bar u=u_{\bar p}$.

The map $s:\cE(p)\to\fA^\act$ is a cartesian fibration, so, in 
particular, the map $s^\op:\cE(p)^\op\to(\fA^{\act})^\op$, is a smooth map.

Given $a\in\fA^\act$, the fiber $\cE(p)_a=\fP^\act_{Y(a)/}
\times_{\fP^\act}K$ is a cocartesian fibration over $K$ classified by the 
composition 
$K\stackrel{p}{\to}\fP^\act\stackrel{ev_{Y(a)}}{\to}\cS.$

For any $a\in\fA$ the induced map $\cE_a\to\bar\cE_a$ is a weak homotopy equivalence by \cite{L.T}, 3.3.4.5. 
 Therefore, \cite{L.T}, 4.1.2.18 implies that the map 
$\cE\to\bar\cE$ is a contravariant equivalence over $\fA^\act$.

We will now apply 
\cite{L.T}, 5.1.5.4 to deduce the required result. We have a map of 
$\LM$-operads  $f:\fA\to\fB$ and a contravariant equivalence 
$\cE\to\bar\cE$ over $\fA^\act$. 

The morphism of functors $\bar u:Y\circ\bar s\to\bar p\circ\bar d$
gives rise to a composition
$$
Y\circ\bar s\stackrel{\bar u}{\longrightarrow}\bar p\circ\bar d
\to\bar p(*)
$$
to the constant functor with the value at $\bar p(*)\in\fP$, that 
is a functor $v:\bar\cE^\triangleright\to\fP^\act$ carrying the terminal
object to $\bar p(*)$. Let us look at the composition 
$F\circ v:\bar\cE^\triangleright\to\fB$.

This is a colimit diagram as $\colim(f\circ\bar s:\bar\cE\to
\fB)=\colim(F\circ\bar p:K^\triangleright\to\fB)$ that is obviously
$F(\bar p(*))$.

Therefore, the restriction of $F\circ v$ to $\cE^\triangleright$ is 
colimit diagram. This means that $\colim(F\circ p:K\to\fB^\act)=F(*)$.
This completes the proof of~\ref{prp:olke-2}.

\begin{lem}
\label{lem:dec}
Let $\cP\to\cQ$ be a morphism of operads and let $x,y\in\cQ_1$. Then
one has an equivalence
$$
\cP^\act_{/x\oplus y}=\cP^\act_{/x}\times\cP^\act_{/y}.
$$
\end{lem}
\begin{proof}
The category $\cP^\act$ has a symmetric monoidal structure, see
\cite{L.HA}, 2.2.4.3. One has a symmetric monoidal functor 
$\cP^\act\to\cQ^\act$ so that the monoidal operation induces a functor
$\otimes:\cP^\act_{/x}\times\cP^\act_{/y}\to\cP^\act_{/x\oplus y}$.
This is clearly an equivalence.
\end{proof}

\section{Functoriality of $\cQ_{X,\cM,\cB}$}
\label{sec:functorialityQ}

In Section~\ref{sec:wc-mult} we present a monoidal version of
the equivalence~(\ref{eq:y*}). This requires a better understanding of
the functorial properties of this equivalence.  
Our aim is to present the collection of 
$\TEN_\succ$-monoidal categories $\cQ_{X,\cM,\cB}$ defined in
\ref{sss:Q} as a symmetric monoidal functor from
$\Cat\times\Alg_{\LM}(\Cat^L)$ to a certain category of
$\TEN_\succ$-monoidal categories, see~\ref{prp:Q-SM}. The rest of the section is devoted to justifying Corollary~\ref{crl:moncolax-K}
and Lemma~\ref{lem:duality-K} used in the proof of~\ref{prp:Q-SM}.
The proof of Proposition~\ref{prp:cat-coc-K-SM} was suggested to us by the referee.

\subsection{Families of monoidal categories}
\subsubsection{}
\label{sss:toMonlax}
Let $\cP$ be an operad. Recall that the category $\Mon_\cP^\lax$ 
is defined as the full subcategory of the category of $\cP$-operads
$\Op_\cP$ spanned by the $\cP$-monoidal categories. The category $\Mon_\cP^\colax$ of 
$\cP$-monoidal categories and colax monoidal functors can be formally 
defined as follows. First of all, one defines $\Coop_{\cP^\op}$,
the category of $\cP^\op$-cooperads, as the category of functors
$p:C\to\cP^\op$ such that $p^\op:C^\op\to\cP$ is a $\cP$-operad.
The categories $\Coop_{\cP^\op}$ and $\Op_\cP$ are obviously 
equivalent but any $\cP$-monoidal category has both operadic and 
cooperadic realization that are intertwined by the passage from a monoidal category to its opposite. 
If $M$ is a $\cP$-monoidal category, its 
operadic realization is a $\cP$-operad $M^\otimes\to\cP$ that is
a cocartesian fibration obtained by Grothendieck construction
from the functor $\cP\to\Cat$ describing the $\cP$-monoidal structure on $M$. The equivalence between cocartesian fibrations over $\cP$ and the cartesian fibrations over $\cP^\op$ carries $M^\otimes$ to the cooperadic realization 
$^\otimes M\to\cP^\op$ of $M$. The embeddings 
$\Mon^\lax_\cP\to\Cat_{/\cP}$ and 
$\Mon^\colax_\cP\to\Cat_{/\cP^\op}$ preserve products.

The category $\Fun(B^\op,\Mon_\cP^\lax)$ is equivalent, by
Grothendieck construction, to the category of arrows
$p:X\to B\times\cP$, with components $p_B:X\to B$ and $p_\cP:X\to\cP$, satisfying the following properties.
\begin{itemize}
\item[1.] $p_B$ is a cartesian fibration, $p_\cP$ is a cocartesian fibration.
\item[2.] $p$ is a map of cartesian fibrations over $B$ and of cocartesian fibrations over $\cP$.
\item[3.] For any $b\in B$ the fiber $X_b=p_B^{-1}(b)\to\cP$
is a $\cP$-operad.
\item[4.] For any $\beta:b\to b'$ in $B$ the cartesian lifting 
$\beta^!:X_{b'}\to X_b$ is a map of $\cP$-operads.
\end{itemize}
The first two properties mean that $p:X\to B\times\cP$ is a lax bifibration in the sense of \cite{H.D}, 3.1.2.

Recall that $\Cat^\coc_{/B}$ (resp., $\Cat^\cart_{/B}$) denotes the
full subcategory of $\Cat_{/B}$ spanned by the cocartesian (resp., cartesian) fibrations.
\begin{prp}(see~\cite{H.EY}, 2.11.3)
\label{prp:monlax}
One has an equivalence $\Fun(B^\op,\Mon_\cP^\lax)=
\Alg_\cP(\Cat^\cart_{/B})$.
\end{prp}\qed

There is a similar description of $\Fun(B,\Mon_\cP^\colax)$.
This category is equivalent to the category of arrows
$q:X\to \cP^\op\times B$ such that
\begin{itemize}
\item[1.] $q_B$ is a cocartesian fibration, $q_{\cP^\op}$ is
a cartesian fibration.
\item[2.] $q$ is a map of cocartesian fibrations over $B$ and of
cartesian fibrations over $\cP^\op$.
\item[3.] For any $b\in B$ the fiber $X_b=q_B^{-1}(b)\to\cP^\op$
is a $\cP^\op$-cooperad.
\item[4.] For any $\beta:b\to b'$ in $B$ the cocartesian lifting 
$\beta_!:X_{b}\to X_{b'}$ is a map of $\cP$-cooperads.
\end{itemize}

The following result immediately follows from~\ref{prp:monlax}.
\begin{prp}
\label{prp:moncolax}
One has an equivalence $\Fun(B,\Mon_\cP^\colax)=
\Alg_\cP(\Cat^\coc_{/B})$.
\end{prp}
\qed

\ 

Let now $\cP$ and $\cQ$ be two operads. In the lemma below the
categories $\Mon_\cP^\lax$ and $\Mon_\cQ^\colax$ are endowed with 
the cartesian symmetric monoidal structure.
\begin{lem}
\label{lem:duality}
There is an equivalence of categories
$$
\Alg_\cQ(\Mon_\cP^\lax)=\Alg_\cP(\Mon_\cQ^\colax).
$$
\end{lem}
\begin{proof}
The category $\Alg_\cP(\Mon^\colax_\cQ)$ indentifies with the full
subcategory of functors $f:\cP\to\Cat_{/\cQ^\op}$ satisfying the Segal condition and such that for any $p\in\cP$ $f(p)$ is a (cooperad presentation of a) $\cP$-monoidal category. This can be equivalently described as the category
of $(\cQ^\op,\cP)$-lax bifibrations \cite{H.D} that is maps 
$q:X\to \cQ^\op\times\cP$ that are simultaneously map of cartesian fibrations over $\cQ^\op$
and of cocartesian fibrations over $\cP$, satisfying the listed above properties. Equivalently, this can be rewritten in terms of
$(\cQ,\cP)$-Gray fibrations~\cite{HHLN} 
$p:X\to\cP\times\cQ$ satisfying the properties 
\begin{itemize}
\item for any $q\in\cQ$ the fiber $X_q\to\cP$ is a $\cP$-monoidal category.
\item Any decomposition $q=q_1\oplus q_2$ gives rise to a cartesian diagram
$$
\xymatrix{
&X_q \ar[r]\ar[d] &X_{q_1}\ar[d]\\
&X_{q_2}\ar[r] &\cP
}.
$$
\end{itemize}
 Rewriting these as
functors $\cQ\to\Cat_{/\cP}$, we get the subcategory
$\Alg_\cQ(\Mon_\cP^\lax)$.
\end{proof}

\subsection{Monoidal structure on $\Mon_\cP^{\lax/\colax,L}$}
\label{ss:SM-Mon-laxcolax}

The category $\Mon_\cP^L=\Alg_\cP(\Cat^L)$ has a symmetric monoidal structure inherited from that on $\Cat^L$. In this subsection
we show that the lax and the colax versions of this category also have a symmetric monoidal structure.

We denote by $\Mon_\cP^{\lax,\times}$ the
cocartesian fibration over $\Fin_*$ defined by the cartesian SM sructure
on $\Mon^\lax_\cP$~\footnote{Here $\Mon_\cP^\lax$ consists of big
$\cP$-monoidal categories and lax $\cP$-monoidal functors.}.
The subcategory $\Mon_\cP^{\lax,L,\otimes}$ of $\Mon_\cP^{\lax,\times}$
is defined as follows. Its objects are the collections of $\cP$-monoidal
categories with colimits. An arrow $C_1\times\dots C_n\to C$ in $\Mon_\cP^{\lax,\times}$ belongs to $\Mon_\cP^{\lax,L,\otimes}$
if it preserves colimits in each argument.  The composition 
$p:\Mon_\cP^{\lax,L,\otimes}\to\Mon_\cP^{\lax,\times}\to\Fin_*$
is an operad.

\begin{prp}
\label{prp:Mon-lax-K-SM}
$p$ is a cocartesian fibration. This yields a structure
of SM category on $\Mon_\cP^{\lax,L}$. The embedding
$\Mon_\cP^L\to\Mon_\cP^{\lax,L}$ is a symmetric monoidal functor.
\end{prp}
\begin{proof}
Given $A_1,\dots,A_n$ in $\Mon_\cP^{\lax,L}$ let $\otimes A_i$
denote their tensor product in $\Mon_\cP^L$. We have to verify that
the map $\oplus A_i\to\otimes A_i$ is a cocartesian lifting of
the active arrow $\langle n\rangle\to\langle 1\rangle$, that is, 
that the natural map
\begin{equation}
\label{eq:j}
j:\Map_{\Mon_\cP^{\lax,L}}(\otimes A_i,B)\to\Map_{\Mon_\cP^{\lax,L,\otimes}}
(\oplus A_i,B)
\end{equation}
is an equivalence for any $B\in\Mon_\cP^{\lax,L}$. The source of $j$
is a subspace of 
$\Map_{\Op_\cT}(\otimes A_i,B)=\Map_\cP(\cP,\Funop_\cP(\otimes A_i,B))$.
We will use the following properties of $\Funop_\cP$, see~\cite{H.EY},
2.8.9. Let $A,\ B$ be $\cP$-monoidal categories. Then $\Funop_\cP(A,B)$
is a $\cP$-operad whose
objects over $p\in\cP_1$ are the functors $A_p\to B_p$; for an active arrow $\oplus p_j\to q$ in $\cP$ let $f_j:A_{p_j}\to B_{p_j}$
and $g:A_q\to B_q$. Then $\Map_{\Funop_\cP(A,B)}(\oplus f_j,g)$ identifies with the space 
\begin{equation}
\label{eq:mapop}
\Map_{\Fun(\prod A_{p_j},B_q)}(g\circ\alpha^A_!,\alpha^B_!\circ\oplus f_j),
\end{equation}
where the notation is expained by the diagram
$$
\xymatrix{
&\prod A_{p_j} \ar^{\alpha^A_!}[r]\ar^{\oplus f_j}[d] &A_q\ar^g[d]\\
&\prod B_{p_j} \ar^{\alpha^B_!}[r] &B_q
}.
$$

We define  $\Funop'_\cP(\otimes A_i,B)\subset\Funop_\cP(\otimes A_i,B)$
as the full suboperad spanned by the collections of functors 
$\otimes (A_i)_p\to B_p$
preserving the colimits. Then the source of (\ref{eq:j})
identifies with $\Map_\cP(\cP,\Funop'_\cP(\otimes A_i,B))$.

Similarly, we define $\Funop'_\cP(\prod A_i,B)\subset\Funop_\cP(\prod A_i,B)$ as the full suboperad spanned by the collections of functors
$\prod (A_i)_p\to B_p$ preserving the colimits in each argument. Then the target  of (\ref{eq:j})
identifies with $\Map_\cP(\cP,\Funop'_\cP(\prod A_i,B))$.

This implies that, in order to prove our assertion, it is sufficient to
prove that the map of operads
$$
\Funop'_\cP(\otimes A_i,B)\to\Funop'_\cP(\prod A_i,B)
$$
is an equivalence. It is sufficient to prove that this map defines an
equivalence of the spaces of colors (this is obvious) and of the 
spaces of operations $\Map(\oplus f_j,g)$. The verification is straighforward, based on the description (\ref{eq:mapop}).
\end{proof}

A similar symmetric monoidal structure can be defined on the category
$\Mon_\cP^{\colax,L}$ of $\cP$-monoidal categories with  colimits and colax monoidal functors. We define $\Mon_\cP^{\colax,L,\otimes}$ as the suboperad of $\Mon_\cP^{\colax,\times}$ whose objects
are collections of $\cP$-monoidal catgegories with colimits
and arrows preserving colimits in each argument. 
\begin{prp} 
\label{prp:Mon-colax-K-SM}
$\Mon_\cP^{\colax,L,\otimes}$ is a symmetric monoidal category so that
the embedding $\Mon_\cP^L\to\Mon_\cP^{\colax,L}$ becomes a symmetric monoidal functor.
\end{prp}
\begin{proof}
Passing to the opposite monoidal categories, we are back to a suboperad
of $\Mon_\cP^{\lax,\times}$, but this time working with limits
instead of colimits. The proof of \ref{prp:Mon-lax-K-SM}
now works without change.
\end{proof}

\subsection{The category $\Cat^{\coc,L}_{/B}$}
Here we define the category of cocartesian fibrations with the
fibers having colimits. 
\subsubsection{}Let $B$ be a category.
We denote by $\Cat^\coc_{/B}$ the full subcategory of $\Cat_{/B}$
spanned by the cocartesian fibrations $X\to B$. 

The subcategory $\Cat^{\coc,L}_{/B}$ of $\CAT^\coc_{/B}$ is spanned by the cocartesian fibrations
$p:X\to B$ classified by a functor $B\to\Cat^L\subset\CAT$. The arrows
in $\Cat^{\coc,L}_{/B}$ are those preserving the colimits on the fibers. 

\subsubsection{Relative presheaves} 
\label{sss:relativepre}
 
A category $X$ over $B$ is called flat if 
the functor $\times_BX$ preserves colimits. In this case the right adjoint functor $Y\mapsto\Fun^B(X,Y)$ is defined. Any cocartesian fibration is flat. One has
\begin{lem}
Let $X$ be a cocartesian fibration over $B$ classified by 
a functor $\cX:B\to\Cat$. Then $\Fun^B(X^\op,B\times\cS)$ is a 
a cocartesian fibration classified by the composition
\begin{equation}
\label{eq:PBX}
B\stackrel{\cX}{\to}\Cat\stackrel{P}{\to}\Cat^L.
\end{equation}
\end{lem}
Having in mind this description, we will denote by $P_B(X)$ the obtained cocartesian fibration. This is the category of relative presheaves in $X$.
\begin{proof}
If $X\to B$ is flat, $X^\op\to B^\op$ is also flat, and obviously
$\Fun^B(X,Y)^\op=\Fun^{B^\op}(X^\op,Y^\op)$. Passing to the opposite categories, we deduce from \cite{L.T}, 3.2.2.13, that 
$\Fun^B(X^\op,B\times\cS)$ is cartesian over $B$. Its fiber at
$b\in B$ is $\Fun(X_b^\op,\cS)$, so the cartesian fibration in
question is classified by the functor $B^\op\to\Cat$ carrying
$b$ to $\Fun(X_b^\op,\cS)$. This cartesian fibration is also cocartesian and as such it is classified by~(\ref{eq:PBX}).
\end{proof}

\begin{lem}
\label{lem:PBuni}
The functor $P_B:X\mapsto P_B(X)$ is left adjoint to the forgetful functor $\Cat^{\coc,L}_{/B}\to\Cat^\coc_{/B}$.
\end{lem}
\begin{proof}
Let $X\in\Cat^\coc_{/B}$ and $Y\in\Cat^{\coc,L}_{/B}$. We have to verify that the composition with the Yoneda embedding induces an
equivalence
$$
\Map_{\Cat^{\coc,L}_{/B}}(P_B(X),Y)\to
\Map_{\Cat^{\coc}_{/B}}(X,Y).
$$
A standard reasoning of \cite{L.T}, 5.1.5.5 proves that a map
$P_B(X)\to Y$ over $B$ preserves vertical colimits iff it is a
relative left Kan extension of its restriction to $X$. Since,
by \cite{L.T}, 4.3.2.14, any functor $X\to Y$ over $B$ has a relative
left Kan extension, this implies the result.
\end{proof}

\subsection{SM structure on $\Cat^{\coc,L}_{/B}$} 
The category $\Cat^\coc_{/B}$ has a cartesian SM structure. We denote by
$\Cat^{\coc,\times}_{/B}$ the corresponding category over $\Fin_*$.

Denote by $\Cat^{\coc,L,\otimes}_{/B}$
the subcategory of $\CAT^{\coc,\times}_{/B}$ whose objects over
$\langle n\rangle\in\Fin_*$ are collections $(X_i\to B_i)_{i=1,\ldots,n}$
of objects in $\Cat^{\coc,L}_{/B}$ and morphisms are those preserving 
colimits in each argument. The composition $p:\Cat^{\coc,L,\otimes}_{/B}
\to\Fin_*$ is obviously an operad.

We have
\begin{prp}
\label{prp:cat-coc-K-SM}
\begin{itemize}
\item[1.] $p$ is a cocartesian fibration. This yields a structure
of SM category on $\Cat^{\coc,L}_{/B}$.
\item[2.] For $X_1,\dots,X_n\in\Cat^{\coc,L}_{/B}$ 
and $b\in B$ one has
$$
(X_1\otimes\ldots\otimes X_n)_b=(X_1)_b\otimes\dots\otimes(X_n)_b.
$$
\end{itemize}
\end{prp}
\begin{proof}
To prove the first claim,
we present for each collection $X_1,\dots,X_n\in\Cat^{\coc,L}_{/B}$ 
 a universal arrow
$$
\alpha:\oplus_{i=1}^nX_i\to X
$$
in $\Cat^{\coc,L,\otimes}_{/B}$.  If $X_i\to X$
is classified by a functor $F_i:B_i\to\Cat^L$, we define $p:X\to B$
as the cocartesian fibration classified by the functor $F:B\to\Cat^L$
given by the formula $F(b)=F_1(b)\otimes\ldots\otimes F_n(b)$. 
The arrow $\oplus X_i\to X$ is 
obviously defined. We will now show that it is cocartesian in 
$\Cat^{\coc,L,\otimes}_{/B}$, that is 
that for any $Y\in\Cat^{\coc,L}_{/B}$ $\alpha$ induces an equivalence
\begin{equation}
\label{eq:hompq}
\alpha^*:\Map_{\Cat^{\coc,L}_{/B}}(X,Y)\to
\Map^\act_{\Cat^{\coc,L,\otimes}_{/B}}(\oplus X_i, Y).
\end{equation}

Let us first assume that $Y\to B$ is both cocartesian and cartesian
fibration~\footnote{We thank the referee for suggesting this idea of the proof 
of~\ref{prp:cat-coc-K-SM}.}. In this case we will be able to present the source and the 
target of (\ref{eq:hompq}) as subspaces of the space of sections of 
cocartesian fibrations
over $B$. The target space is embedded into
$$\Map^\act_{\Cat^{\coc,\otimes}_{/B}}(\oplus X_i, Y)
$$
that is the space of sections of $\Fun^B(\prod X_i,Y)\to B$
(here $\prod$ denotes the product over $B$)
which is a cocartesian fibration by \cite{L.T}, 3.2.2.13 classified by the functor carrying $b\in B$ to $\Fun(\prod (X_i)_b,Y_b)$. Similarly, the source space is embedded into $\Map_{\Cat^\coc_{/B}}(X,Y)$
that is the space of sections of the cocartesian fibration
$\Fun^B(X,Y)\to B$ classified by the functor $b\mapsto\Fun(\otimes (X_i)_b,Y_b)$. One readily sees that the source  and the target
of (\ref{eq:hompq}) both identify with the space of sections of
the cocartesian fibration classified by the functor 
$b\mapsto\Fun^L(\otimes (X_i)_b,Y_b)$. This proves the assertion 
in the case when $q:Y\to B$ is a cartesian fibration.

To prove that $(\ref{eq:hompq})$ is an equivalence for general $q$,
we embed $Y$ into $P_B(Y)$  as in~\ref{sss:relativepre}. This is
a full embedding. This allows one to
identify the left and the right hand sides of (\ref{eq:hompq}) with the subcategories of the similar expressions for $P_B(Y)$.
\end{proof}

\begin{lem}
\label{lem:PisSM}
The functor $P_B:\Cat^\coc_{/B}\to\Cat^{\coc,L}_{/B}$ is symmetric monoidal.
\end{lem}
\begin{proof}
We  define $\cC$ as the full subcategory of $\Fun([1],\CAT_{/B})$ spanned
by the arrows $X\to P$ where $X\in\Cat^\coc_{/B}$ and $P\in\Cat^{\coc,L}_{/B}$. Denote
by $p_0:\cC\to\Cat$ and $p_1:\cC\to\Cat^L$ the obvious projections.
$\cC$ is closed under finite products and we denote by $\cC^\times$
the corresponding category over $\Fin_*$. We denote by 
$\cD^\otimes\subset\cC^\times$ the subcategory whose objects
are collections of the arrows $f:X\to P$ inducing an equivalence 
$P_B(X)\to P$. Arrows in $\cD^\otimes$ are defined by the diagrams
$$
\xymatrix{
&\prod X_i\ar^a[r]\ar^{\prod f_i}[d] & X\ar^f[d]\\
&\prod P_i\ar^b[r] &P
}
$$
where $b$ preserves colimits in each argument. One has obvious functors
$p_0:\cD^\otimes\to\Cat^{\coc,\times}_{/B}$ and 
$p_1:\cD^\otimes\to\Cat^{\coc,L,\otimes}_{/B}$. We will now show that 
$\cD^\otimes$ is a SM category. Given a collection $f_i:X_i\to P_i$,
we define a map $u:\oplus f_i\to f$ in $\cD^\otimes$ lifting 
the active arrow $\langle n\rangle\to\langle 1\rangle$ in $\Fin_*$ as
follows. We put $X=\prod X_i$ and $a=\id_X$. We put $P=\otimes P_i$ with the canonical map $b:\prod P_i\to\otimes P_i$.  The map $u$ so defined
is a cocartesian lifting; this can be verified by induction in $n$,
using universality of $P_B$ proven in~\ref{lem:PBuni}. Now the functor
$p_0:\cD^\otimes\to\Cat^\times$ is a SM functor that is an equivalence
of categories. Therefore it is an equivalence of SM categories.
Finally $p_1$ is also symmetric monoidal, and this proves the assertion. 
\end{proof}

\begin{lem}
\label{lem:dualpresheaves}
Let $X\in\Cat^\coc_{/B}$. Then $P_B(X)$ is dualizable in
$\Cat^{\coc,L}_{/B}$ with dual $P_B(X^\op)$.
\end{lem}
\begin{proof}
Yoneda embedding $Y:X\to P_B(X)$ gives rise to a morphism
$X\times_BX^\op\to\cS$ in $\Cat^\coc_{/B}$. This induces a map
$e:P_B(X\times_B X^\op)=P_B(X^\op)\otimes P_B(X)\to\cS$ in 
$\Cat^{\coc,L}_{/B}$. The map $c:\cS\to P_B(X^\op)\otimes P_B(X)=P_B(X^\op\times X)$ is uniquely defined by the final section
of $P_B(X^\op\times X)$. The maps $c$ and $e$ are a unit and a counit of adjunction.

\end{proof}

\subsubsection{$\Cat^{\cart,L}_{/B}$}

Everything said about the category of cocartesian fibrations with colimits holds also for the category of cartesian fibrations with colimits. Passing to opposite categories establishes and equivalence
of $\Cat^\cart_{/B}$ with $\Cat^\coc_{/B^\op}$. This equivalence carries cartesian fibrations with colimits to cocartesian fibrations with 
limits. Fortunately, the proof of 
Proposition~\ref{prp:cat-coc-K-SM} is valid when one replaces the
colimits with the limits.

\

We will now deduce from~\ref{prp:monlax}  the version for the categories 
with colimits as follows. The equivalence~\ref{prp:monlax} restricts to 
the following.
\begin{crl}
\label{crl:monlax-K}
The equivalence~\ref{prp:monlax} induces an equivalence
$\Fun(B^\op,\Mon_\cP^{\lax,L})=
\Alg_\cP(\Cat^{\cart,L}_{/B})$.
\end{crl}
\begin{proof}
The left-hand side is a subcategory of $\Fun(B^\op,\Mon_\cP^\lax)$,
with the objects consisting of $\cP$-monoidal categories with colimits and arrows preserving these colimits. The embedding
$\Cat^{\cart,L,\otimes}_{/\cP}\to\Cat^{\cart,\times}_{/\cP}$
defines $\Alg_\cP(\Cat^{\cart,L}_{/B})$ as a the same 
subcategory of $\Alg_\cP(\Cat^{\cart}_{/B})$.
\end{proof}
Similarly, one has
\begin{crl}
\label{crl:moncolax-K}
The equivalence~\ref{prp:moncolax} induces an equivalence
$\Fun(B,\Mon_\cP^{\colax,L})=
\Alg_\cP(\Cat^{\coc,L}_{/B})$.
\end{crl}\qed
 
Lemma~\ref{lem:duality} has also a version for categories with colimits.
\begin{lem}
\label{lem:duality-K}
The equivalence~\ref{lem:duality} induces an equivalence
\begin{equation}
\label{eq:duality}
\Alg_\cQ(\Mon_\cP^{\lax,L})=\Alg_\cP(\Mon_\cQ^{\colax,L}).
\end{equation}
\end{lem}
\begin{proof}
Both the left and the right hand side are subcategories of
$\Alg_\cQ(\Mon_\cP^\lax)=\Alg_\cP(\Mon_\cQ^\colax)$. 
A $\cQ$-algebra in $\Mon_\cP^\lax$ represented by a functor 
$f:\cQ\to\Mon_\cP^\lax$ belongs to $\Alg_\cQ(\Mon_\cP^{\lax,L})$ 
if the following three properties are satisfied.
\begin{itemize}
\item[1.] For any $p\in\cP_1$, $q\in\cQ_1$ one has $f(q)_p\in\Cat^L$.
\item[2.] For $p=p_1\oplus\dots\oplus p_n$ with $r, p_i\in\cP_1$,
an active arrow $\alpha:p\to r$ in $\cP$, the composition 
$$
\prod_i f(q)_{p_1}\stackrel{\rho^{-1}}{\to} f(q)_p\stackrel{\alpha_!}{\to} f(q)_r
$$
preserves colimits in each argument. Here $\rho:f(q)_p\to
\prod f(q)_{p_i}$ is an equivalence since $f(q)$ satisfies the Segal condition in $p$.
\item[3.] For  $q=q_1\oplus\dots\oplus q_n$, $s,q_i\in\cQ_1$, $p\in\cP_1$
and an active arrow $\beta:q\to s$ in $\cQ$, the composition 
$$
\prod_i f(q_i)_p\stackrel{\rho^{-1}}{\to} f(q)_p\stackrel{\alpha_!}{\to} f(s)_p
$$
preserves colimits in each argument. Here 
$\rho:f(q)_p\to\prod f(q_i)_p$ is an equivalence since $f$ satisfies the Segal condition in $q$.
\end{itemize}
The same three conditions define the subcategory 
$\Alg_\cP(\Mon_\cQ^{\colax,L})$ of $\Alg_\cP(\Mon_\cQ^\colax)$.

\end{proof}

\subsection{The functor $\cQ:\Cat\times\Alg_\LM(\Cat^L)\to\Mon^\colax_{\TEN_\succ}$}
\label{ss:Q-SM}

In this subsection we prove the following
\begin{prp}
\label{prp:Q-SM}
The assignment $(X,\cM,\cB)\mapsto \cQ_{X,\cM,\cB}$
gives rise to a symmetric monoidal functor 
$\cQ:\Cat\times\Alg_\LM(\Cat^L)\to\Mon^{\colax,L}_{\TEN_\succ}$.
\end{prp}
Note that the SM structure on $\Alg_\LM(\Cat^L)$ is induced from that on $\Cat^L$ and the structure on $\Mon^{\colax,L}_{\TEN_\succ}$
is  defined as in~\ref{ss:SM-Mon-laxcolax}. In the rest of this
section we use the notation $\cT=\TEN_\succ$.
\begin{proof}
Recall \ref{sss:Q} that the $\cT$-monoidal category 
$\cQ_{X,\cM,\cB}$ is constructed in three steps, the first one 
assigning to $(X,\cM,\cB)$ the dualizable right $\cM$-module 
$\Fun(X,\cM)$, the second assigning to it the corresponding counit diagram (\ref{eq:eval-quiv}) and, finally (\ref{eq:eval-quiv-2}),
tensoring it with $\cB$.  
We will follow \ref{sss:Q} and \ref{crl:moncolax-K} and present a $\cT$-
algebra object in $\cC:=\Cat^{\coc,L}_{/B}$ with 
$B=\Cat\times\Alg_{\LM}(\Cat^L)$. 
We denote by $X\in\Cat^\coc_{/B}$ the tautological family defined by the
projection $B\to\Cat$, use $P_B(X)$ and $P_B(X^\op)$ instead of $P(X)$ and $P(X^\op)$ and define $\cM$ and $\cB$ to be the cocartesian 
fibrations classified by the projections of $B$ to the 
components of $\Alg_\LM(\Cat^L)$.  Lemma~\ref{lem:dualpresheaves},
gives us a $\cT$-algebra object in $\Cat^{\coc,L}_{/B}$,
that is, by~\ref{crl:moncolax-K}, a functor
\begin{equation}
\label{eq:q}
\cQ:\Cat\times\Alg_\LM(\Cat^L)\to\Mon^{\colax,L}_\cT
\end{equation}
carrying the triple $(X,\cM,\cB)$ to the $\cT$-monoidal category $\cQ_{X,\cM,\cB}$.

We will now show that the functor (\ref{eq:q}) canonically extends to
a symmetric monoidal functor. 
We will present $\cQ$ as a tensor product of two symmetric monoidal functors, $\cQ_0:\Cat\to\Mon_\cT^{\colax,L}$ and 
$\Pi:\Alg_\LM(\Cat^L)\to\Mon_\cT^{\colax,L}$.
The functor $\Pi$ is defined by the map of operads $\pi:\cT\to\LM$
carrying the colors $a,a',m,b$ to $A\in[\LM]$ and $n,k$ to $M\in[\LM]$.
The functor $\Pi$ is the composition of $\pi^*:\Alg_\LM(\Cat^L)\to
\Alg_\cT(\Cat^L)$ and the obvious SM embedding
$\Alg_\cT(\Cat^L)\to\Mon_\cT^{\colax,L}$.

It remains to construct the SM functor $\cQ_0$.
The $\cT$-monodial category $\cQ_0(X)$ is defined by 
the formulas
$$
A=A'=\cS,\ M=P(X),\ N=P(X^\op),\ B=\Quiv_X(\cS), K=\cS.
$$
Our aim is to canonically extend this to a SM functor.  
We construct a new category $\cC$
as follows. Its objects are collections consisting of a category $X$, a 
$\cT$-monoidal category $(M,B,N,K)$,  a functor $i:X\to M$ 
and an arrow $k:[0]\to K$ (that is, an object of $K$).
The category $\cC$ has a cartesian SM structure.
 
We now define a subcategory $\cD^\otimes$ of $\cC^\times$ as follows.
The objects of $\cD^\otimes$ are the (collection of) objects of $\cC$
satisfying the following properties.
\begin{itemize}
\item $(M,B,N,K)$ is a $\cT$-monoidal category with colimits.
\item $i:X\to M$ presents $M$ as $P(X)$ and $k\in K$ induces an equivalence $\kappa:\cS\to K$.
\item The right action of $B$ on $M$ determines an equivalence $B^\op=\End(M)$.
\item The composition $M\times N\to K\stackrel{\kappa^{-1}}{\to}\cS$ establishes $N$ as right dual to  $M$.
\end{itemize}
The morphisms are the maps $C_1\times\dots\times C_n\to C$ in $\cC^\times$
with $C,C_i$ satisfying the properties as above and such that the
corresponding maps $\prod M_i\to M$, etc., $\prod K_i\to K$, preserve
colimits in each argument.

One can easily see that the forgetful functor 
$\cD^\otimes\to\Cat^\times$ is an equivalence.
The composition $\cD^\otimes\to\cC^\times\to\Mon_\cT^{\colax,\times}$ 
induces a SM functor $\cD^\otimes\to\Mon_\cT^{\colax,L,\otimes}$.

\end{proof}
 
\section{Multiplicative structures}
\label{sec:wc-mult}

In this section we present a
monoidal version of the universality Theorem~\ref{thm:universal}.

Let $\cO$ be an operad. 
Assume that $\cM$ is an $\cO$-algebra object in $\Alg_\Ass(\Cat^L)$.
Equivalently, we assume that $\cM\in\Alg_{\cO\otimes\Ass}(\Cat^L)$. 
In this case many objects mentioned above acquire a structure of 
$\cO$-algebra~\footnote{This sentence becomes slightly imprecise
if $\cO$ is not monochrome: an $\cO$-algebra consists of
more than one object.}. For instance, the notion of $\cO$-monoidal
$\cM$-enriched category makes sense, as well as the notion of 
$\cO$-monoidal left $\cM$-module. We prove that if $\cA$ is an $\cO$-monoidal
$\cM$-enriched category, the enriched presheaves $P_\cM(\cA)$ 
form an $\cO$-monoidal left $\cM$-module category and Yoneda embedding
is an $\cO$-monoidal $\cM$-functor universal among such functors
to $\cO$-monoidal left $\cM$-modules with colimits.

\subsection{From associative algebras to left modules}

\subsubsection{}

Recall \cite{L.HA}, 2.10, that, given a bilinear map of operads $\mu:\cP\times\cQ\to\cR$, and an $\cR$-operad 
$\cX$, the $\cP$-operad
$p:\Alg^\mu_{\cQ/\cR}(\cX)\to\cP$ is defined as the object
representing the functor
$$
K\in\Cat_{/\cP}\mapsto
\Map_{\Cat^+_{/\cR^\natural}}(K^\flat\times\cQ^\natural,\cX^\natural),
$$
see \cite{H.EY}, 2.10.
In the case $\cX$ is $\cR$-monoidal, 
$\Alg^\mu_{\cQ/\cR}(\cX)$ is $\cP$-monoidal. In the case 
$\mu$ represents $\cR$ as a tensor product
of $\cP$ and $\cQ$, one has an equivalence
$$\Alg_\cP(\Alg^\mu_{\cQ/\cR}(\cX))=\Alg_\cR(\cX).$$
We will suppress the letter $\mu$ from the notation
if it is clear from the context.

We will need the following general claim about cocartesian fibrations.

\begin{lem}
\label{lem:map-coc}
Let 
$$
\xymatrix{
&\cP\ar^f[rr]\ar_p[rd]& &\cQ\ar^q[ld]\\
& &B &
}
$$
be a map of cocartesian fibrations over $B$.
Assume that
\begin{itemize}
\item[(1)] For any $b\in B$ $f_b:\cP_b\to\cQ_b$ is a cocartesian fibration.
\item[(2)] For any $\alpha:b\to b'$ in $B$ the functor
$$\alpha_!:\cP_b\to\cP_{b'}$$
carries $f_b$-cocartesian arrows to $f_{b'}$-cocartesian
arrows.
\end{itemize}
Then $f$ is a cocartesian fibration.
\end{lem}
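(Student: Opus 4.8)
The plan is to verify the standard pointwise criterion for $f$ to be a cocartesian fibration: it suffices to produce, for every object $x\in\cP$ and every arrow $\bar g\colon f(x)\to y$ in $\cQ$, an $f$-cocartesian lift $x\to\tilde y$ of $\bar g$, and then to check that such lifts are closed under composition, so that by the locally-cocartesian criterion of \cite{L.T}, 2.4.2.8 (equivalently, by testing against $\Delta^1$ and $\Delta^2$ as in \cite{L.T}, 2.4.2.11) the locally cocartesian fibration $f$ is genuinely cocartesian. I would first set $\alpha=q(\bar g)\colon b\to b'$ with $b=p(x)$, and use that $q$ is a cocartesian fibration to factor $\bar g\simeq\bar g_0\circ\chi$, where $\chi\colon f(x)\to\alpha_! f(x)$ is $q$-cocartesian over $\alpha$ and $\bar g_0$ lies in the fibre $\cQ_{b'}$. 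This splits the problem of lifting $\bar g$ into lifting a $q$-cocartesian arrow and lifting a fibrewise arrow.

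For the fibrewise part, $\bar g_0$ is an arrow of $\cQ_{b'}$, and by hypothesis (1) the fibre map $f_{b'}\colon\cP_{b'}\to\cQ_{b'}$ is a cocartesian fibration; hence $\bar g_0$ admits an $f_{b'}$-cocartesian lift $\eta$ in $\cP_{b'}$, and one checks that such a lift, being cocartesian for $f_{b'}$ and lying over the single object $b'\in B$, is in fact $f$-cocartesian. For the $q$-cocartesian part, I would lift $\chi$ using the $p$-cocartesian arrow $\psi\colon x\to\alpha_! x$ furnished by the cocartesian fibration $p$: the point is that $f(\psi)$ and $\chi$ both lie over $\alpha$ with source $f(x)$, so comparing them reduces to checking that $f$ carries the $p$-cocartesian arrow $\psi$ to a $q$-cocartesian arrow; granting this, \cite{L.T}, 2.4.2.3(1) identifies $\psi$ as an $f$-cocartesian lift of $\chi$. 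Composing, $\eta\circ\psi$ is an $f$-cocartesian lift of $\bar g_0\circ\chi\simeq\bar g$, since a composite of cocartesian arrows is cocartesian (\cite{L.T}, 2.4.1.7).

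The delicate points, and where I expect the real work to lie, are the two compatibility statements that upgrade these fibrewise and horizontal lifts into a coherent global cocartesian lift and make them composable. This is exactly the content of hypothesis (2): the requirement that each transport functor $\alpha_!\colon\cP_b\to\cP_{b'}$ carry $f_b$-cocartesian arrows to $f_{b'}$-cocartesian arrows is what controls the comparison between $\alpha_!\circ f_b$ and $f_{b'}\circ\alpha_!$, and hence guarantees that the horizontal lift $\psi$ and the fibrewise lift $\eta$ interact correctly under the two cocartesian transports of $p$ and $q$, so that the locally cocartesian lifts constructed above are stable under composition. I would organize this verification by base changing along the edges and $2$-simplices of $B$, reducing to the cases $B=\Delta^1$ and $B=\Delta^2$, where the straightenings of $p$ and $q$ turn the claim into the assertion that the fibrewise data assemble into a single functor out of $\cQ$; condition (2) supplies precisely the cocycle-type compatibility needed for this assembly. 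The main obstacle is therefore not the existence of individual lifts but showing that the composite of a horizontal and a fibrewise cocartesian lift is again cocartesian, which is the step that consumes hypothesis (2).
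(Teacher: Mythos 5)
Your skeleton matches the paper's, which simply cites \cite{L.T}, 2.4.2.11 to conclude that $f$ is a \emph{locally} cocartesian fibration whose locally cocartesian edges are exactly the composites (fibrewise $f_{b'}$-cocartesian)$\,\circ\,$($p$-cocartesian), and then uses hypothesis (2) to show that these edges are closed under composition, whence $f$ is cocartesian by \cite{L.T}, 2.4.2.8. The genuine gap in your write-up is the assertion that the fibrewise lift $\eta$, ``being cocartesian for $f_{b'}$ and lying over the single object $b'\in B$, is in fact $f$-cocartesian''. That check fails: $f_{b'}$-cocartesianness only tests $\eta$ against objects of $\cP_{b'}$, whereas $f$-cocartesianness tests it against all of $\cP$; a priori $\eta$ is only \emph{locally} $f$-cocartesian, and promoting ``locally'' to ``genuinely'' is the entire content of the lemma. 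Your own closing paragraph exposes the inconsistency: if $\psi$ and $\eta$ were both genuinely $f$-cocartesian, their composite would be $f$-cocartesian by \cite{L.T}, 2.4.1.7 and hypothesis (2) would never be used. Moreover, you have misplaced where (2) enters: forming a single locally cocartesian lift $\eta\circ\psi$ needs no such hypothesis; (2) is consumed only when composing two locally cocartesian edges, say $u''\circ u'$ over $\alpha:b\to b'$ and $v''\circ v'$ over $\beta:b'\to b''$. There one must commute the fibrewise edge $u''$ past the $p$-cocartesian edge $v'$, replacing it by $\beta_!(u'')$; hypothesis (2) is exactly the statement that $\beta_!(u'')$ is again fibrewise cocartesian, so the total composite retains the required shape and is locally cocartesian. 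This commutation is the step your ``cocycle-type compatibility'' paragraph gestures at but never performs.

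On the other hand, the point you merely ``grant'' --- that $f$ carries $p$-cocartesian edges to $q$-cocartesian edges --- you were right to isolate: it is hypothesis (i) of \cite{L.T}, 2.4.2.11, it does \emph{not} follow from (1) and (2) (over $B=\Delta^1$ one can shear a product fibration by a non-invertible central endomorphism of the identity functor of the fibre and violate the conclusion while keeping (1) and (2)), and it is tacitly assumed in the statement of the lemma and verified in the application \ref{prp:forgetfulcocartesian}. So it must be recorded as an additional assumption rather than left as something ``one checks''.
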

\begin{proof}
By \cite{L.T}, 2.4.2.11, $f$ is a locally cocartesian fibration, with locally cocartesian arrows of the form
$u=u''\circ u'$ where $u'$ is $p$-cocartesian and
$u''\in\cP_{b'}$ $f_{b'}$-cocartesian, with $p(u):b\to b'$.
Condition (2) ensures that the composition of locally cocartesian arrows is locally cocartesian. This implies the claim.
\end{proof}

\

Let $\cO$ be an operad and let $\cC\in\Alg_{\cO\otimes\LM}(\Cat^L)$.

\begin{prp}
\label{prp:forgetfulcocartesian}
The forgetful functor 
\begin{equation}
\label{eq:relAlgLM}
\Alg_{\LM/\cO\otimes\LM}(\cC)\to
\Alg_{\Ass/\cO\otimes\LM}(\cC)
\end{equation}
is an $\cO$-monoidal cocartesian fibration.
\end{prp}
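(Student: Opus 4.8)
The plan is to apply Lemma~\ref{lem:map-coc} with base $B=\cO$. First I would record that both sides are genuine $\cO$-monoidal categories. By the construction recalled at the beginning of this section, the bilinear maps $\cO\times\LM\to\cO\otimes\LM$ and $\cO\times\Ass\to\cO\otimes\LM$ (the latter through $\id\otimes a$, where $a:\Ass\to\LM$) applied to the $\cO\otimes\LM$-monoidal category $\cC$ produce $\cO$-monoidal categories
$$\cP:=\Alg_{\LM/\cO\otimes\LM}(\cC),\qquad \cQ:=\Alg_{\Ass/\cO\otimes\LM}(\cC),$$
that is, cocartesian fibrations $p:\cP\to\cO$ and $q:\cQ\to\cO$. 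The map $a$ induces the forgetful functor $f:\cP\to\cQ$ over $\cO$; since $f$ is obtained by restriction along $\id\otimes a$, which only affects the $\LM$-direction, it commutes with the $\cO$-tensor and is therefore $\cO$-monoidal. It then remains to verify the two hypotheses of Lemma~\ref{lem:map-coc}.

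For hypothesis (1) I would identify the fibre of $f$ over an object $o\in\cO$. The representing property of the $\Alg^\mu$ construction gives $\cP_o=\Alg_\LM(\cC_o)$ and $\cQ_o=\Alg_\Ass(\cC_o)$, where $\cC_o$ is the $\LM$-monoidal category obtained by restricting $\cC$ along $\{o\}\otimes\LM\hookrightarrow\cO\otimes\LM$; because $\cC$ takes values in $\Cat^\cK$ and $\Delta^\op\in\cK$, the category $\cC_o$ is an $\LM$-monoidal category with geometric realizations. Under these identifications $f_o$ is the forgetful functor $(A,M)\mapsto A$, whose fibre over $A\in\Alg_\Ass(\cC_o)$ is $\LMod_A(\cC_{o,m})$. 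For an algebra map $\phi:A\to A'$ the cocartesian lift at a module $M$ is the extension of scalars $M\to A'\otimes_AM$, which exists and is computed by the two-sided bar construction (relative tensor product, see~\ref{ss:relativetensor} and~\ref{prp:colimitdescription}) precisely because $\cC_{o,m}$ has geometric realizations. Hence $f_o$ is a cocartesian fibration whose cocartesian arrows are exactly the base-change morphisms.

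For hypothesis (2) I would show that for an operation $\alpha$ of $\cO$ the transport functor $\alpha_!$, supplied by the $\cO$-monoidal structure of $\cC$, sends $f$-cocartesian arrows to $f$-cocartesian arrows. As the cocartesian arrows just described are the base-change arrows $M\to A'\otimes_AM$, and $A'\otimes_AM$ is the geometric realization of $\BAR(A',A,M)$, this reduces to the assertion that $\alpha_!$ commutes with these geometric realizations, i.e. that the canonical map $\alpha_!(A')\otimes_{\alpha_!(A)}\alpha_!(M)\to\alpha_!(A'\otimes_AM)$ is an equivalence. This holds because $\cC\in\Alg_{\cO\otimes\LM}(\Cat^\cK)$ means the $\cO$-structure maps lie in $\Cat^\cK$ and hence preserve $\cK$-indexed colimits, together with $\Delta^\op\in\cK$.

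With (1) and (2) established, Lemma~\ref{lem:map-coc} shows that $f$ is a cocartesian fibration, and condition (2) is exactly the compatibility of its cocartesian arrows with the $\cO$-tensor, so that $f$ is an $\cO$-monoidal cocartesian fibration. I expect the main obstacle to be hypothesis (2): one must make precise that the $\cO$-operations commute with the relative tensor product defining the cocartesian lifts, which rests on the bar-construction description of base change combined with the fact that $\cC$ takes values in $\Cat^\cK$. The identification in step~(1) of the fibrewise cocartesian arrows with extension of scalars is the key that makes this commutation checkable.
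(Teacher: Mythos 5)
Your proposal is correct and follows essentially the same route as the paper: both apply Lemma~\ref{lem:map-coc} with $B=\cO$, identify the fibres as the forgetful functors $\Alg_\LM(\cC_o)\to\Alg_\Ass(\cC_o)$, use the existence of geometric realizations to get fibrewise cocartesian lifts via extension of scalars, and verify condition (2) by observing that the transport functors $\alpha_!$ come from colimit-preserving $\LM$-monoidal functors and hence preserve the base-change characterization $B\otimes_AM\xrightarrow{\sim}N$ of cocartesian arrows. The only cosmetic difference is that you spell out the bar-construction description of the lifts and the $\cO$-monoidality of $f$, which the paper leaves implicit.
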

\begin{proof}
This result is very close to~\cite{L.HA}, 4.5.3.
We will apply Lemma~\ref{lem:map-coc} to the forgetful functor 
$f:\Alg_{\LM/\cO\otimes\LM}(\cC)\to
\Alg_{\Ass/\cO\otimes\LM}(\cC)$ 
over $B:=\cO$.

For $o\in\cO$ we denote $\cC_o$ the $\LM$-monoidal 
category obtained from $\cC$ by the base change 
along $\LM\stackrel{o}{\to}\cO\otimes\LM$.

The fiber $f_o$ of $f$ at $o\in\cO_1$ is a cartesian fibration
as it is a forgetful functor  
$\Alg_\LM(\cC_o)\to\Alg_\Ass(\cC_o)$.
Since these are categories with geometric realizations,
$f_o$ is also a cocartesian fibration. The same is true 
for $f_o$ at any $o\in\cO$ as cocartesian fibrations
are closed under products. This proves the condition (1)
of Lemma~\ref{lem:map-coc}. 
Let us verify the condition (2).
Given $\alpha:o\to o'$ in $\cO$, the functor
$$\alpha_!:\Alg_\LM(\cC_o)\to\Alg_\LM(\cC_{o'})$$
is induced by the colimit preserving $\LM$-monoidal functor
$\cC_\alpha:\cC_o\to\cC_{o'}$ induced by $\alpha$.
An arrow $(A,M)\to (B,N)$ in $\Alg_\LM(\cC_o)$
is $f_o$-cocartesian if it induces an equivalence
$B\otimes_AM\to N$. Thus, $\alpha_!$ preserves this property.
\end{proof}
The forgetful functor (\ref{eq:relAlgLM}) 
 is classified by a lax $\cO$-monoidal 
functor 
$$\LMod:\Alg_{\Ass/\cO\otimes\LM}(\cC)\to\Cat,$$
see~Prop. A.2.1. of \cite{H.R}.

In particular, we have the following.
\begin{crl}
\label{crl:lmod-o-algebra}
The functor $\LMod$ defined as above assigns an $\cO$-monoidal category of left modules
$\LMod_A(\cC)$ to an $\cO$-algebra object $A$ in 
$\Alg_{\Ass/\cO\otimes\LM}(\cC)$. A morphism of $\cO$-algebra
objects $A\to A'$ gives rise to an $\cO$-monoidal functor
$\LMod_A(\cC)\to\LMod_{A'}(\cC)$.
\end{crl}\qed
\begin{rem}
It is worthwhile, in order to keep track of what we are doing, to repeat the above construction in down-to-earth terms.

An $\cO$-algebra object in $\Alg_{\Ass/\cO\otimes\LM}(\cC)$
consists of a collection of associative algebra objects
$A_o$ in $\LM$-monoidal categories $\cC_o$, with operations
$\otimes A_{o_i}\to A_o$ corresponding to each operation
$\alpha:(o_1,\ldots,o_n)\to o$ in $\cO$. The category $\LMod_A(\cC)$
has components $\LMod_A(\cC)_o=\LMod_{A_o}(\cC_o)$, and 
the operation $\alpha:(o_1,\ldots,o_n)\to o$ in $\cO$
assigns to a collection $M_i\in\LMod_{A_{a_i}}(\cC_{a_i})$
the pushforward of $\otimes A_{o_i}$-module $\otimes M_i$ along
$\otimes A_{o_i}\to A_o$.
\end{rem}

We apply Corollary~\ref{crl:lmod-o-algebra} a number of times.

\subsection{$\cO$-monoidal left-tensored categories}
\subsubsection{}
For $\cM\in
\Alg_{\cO\otimes\Ass}(\Cat^L)$ the category
$\LMod_\cM$ is $\cO$-monoidal. An $\cO$-monoidal
left $\cM$-module $\cB$ is defined as an $\cO$-algebra
in $\LMod_\cM$.

An $\cO$-monoidal left $\cM$-module $\cB$ defines an 
$\cO\otimes\LM$-operad $(\cM,\cB)$. It makes sense to talk about
$\cO$-monoidal and lax $\cO$-monoidal functors between $\cO$-monoidal
left $\cM$-modules.

\begin{dfn}
\label{dfn-funOBBlax}
For $\cB,\cB'\in\Alg_\cO(\LMod_\cM)$ we define
$\Fun_{\LMod_\cM}^{\cO,\lax}(\cB,\cB')$ as the full subcategory
of the fiber of the forgetful functor
$$
\Alg_{(\cM,\cB)/\cO\otimes\LM}(\cM,\cB')\to
\Alg_{\cM/\cO\otimes\Ass}(\cM)
$$ 
at $\id_\cM$, spanned by the maps
of $\cO\otimes\LM$-operads $f=(\id_\cM,f_m):(\cM,\cB)\to(\cM,\cB')$
satisfying the conditions
\begin{itemize}
\item $f$ preserves cocartesian liftings of the arrows 
in $\LM$.
\item $f_m$ preserves colimits.
\end{itemize}
\end{dfn}
\begin{dfn}
\label{dfn-funOBB}
A lax $\cO$-monoidal  morphism of left $\cM$-modules
$f:(\cM,\cB)\to(\cM,\cB')$ is called $\cO$-monoidal if it preserves
cocartesian liftings of all arrows in $\cO\otimes\LM$.
\end{dfn}
The full subcategory of $\Fun^{\cO,\lax}_{\LMod_\cM}(\cB,\cB')$
spanned by $\cO$-monoidal arrows is denoted by
$\Fun^{\cO}_{\LMod_\cM}(\cB,\cB')$.

\subsection{$\cO$-monoidal enriched categories}
\subsubsection{}
\label{sss:Omec}

A lax SM functor 
$$ \quiv:\Op_\Ass\to\Fam^\cart\Op_\Ass,$$
as well as its relatives 
$ \quiv^\LM:\Op_\LM\to\Fam^\cart\Op_\LM$ and
$ \quiv^\BM:\Op_\BM\to\Fam^\cart\Op_\BM$,
have been constructed in ~\cite{H.EY}, 3.5.2.  
Since the functor $\Alg_\Ass:\Fam^\cart\Op_\Ass\to\Cat$
preserves the limits, the composition
$\Alg_\Ass\circ\quiv:\Op_\Ass\to\Cat$ 
carrying $\cM\in\Op_\Ass$ to the category $\PCat(\cM)$
of $\cM$-enriched precategories, is lax symmetric monoidal.
The same holds for the compositions
$\Alg_\LM\circ\quiv^\LM:\Op_\LM\to\Cat$ and
$\Alg_\BM\circ\quiv^\BM:\Op_\BM\to\Cat$.

Fix an operad $\cO$. Any $\cO$-algebra $\cM$ in $\Op_\Ass$
(for instance, $\cM\in\Alg_{\cO\otimes\Ass}(\Cat^L)$),
gives rise to an $\cO$-monoidal category $\PCat(\cM)$.

Similarly,  an $\cO$-algebra $\cM$ in $\Op_\LM$
(for instance, $\cM\in\Alg_{\cO\otimes\LM}(\Cat^L)$),
gives rise to an $\cO$-monoidal category $\PCat^\LM(\cM)$.
\begin{dfn}
Let $\cM\in\Alg_{\cO\otimes\Ass}(\Cat^L)$. An $\cO$-monoidal 
$\cM$-enriched precategory is an $\cO$-algebra object in $\PCat(\cM)$.
\end{dfn}

Note that a precategory $\cA\in\PCat(\cM)$ is an associative algebra in the family of planar operads $\Quiv_X(\cM)$ parametrized by $X\in\Cat$.
Thus, an $\cO$-algebra in $\PCat(\cM)$ has automatically an $\cO$-monoidal category $X$ of objects.

\subsubsection{Day convolution} 
\label{sss:lm-version}

Let $(\cM,\cB)\in\Alg_{\cO\otimes\LM}(\Cat^L)$. 
This means that $\cM$ is an $\cO\otimes\Ass$-monoidal
category with colimits and $\cB$ is an $\cO$-monoidal
category with colimits, left-tensored over $\cM$.

By~\ref{sss:Omec}, the category 
$\PCat^\LM(\cM,\cB)=\Alg_\LM(\Quiv^\LM(\cM,\cB))$ is $\cO$-monoidal.
 
By~\ref{prp:forgetfulcocartesian}, the forgetful functor
\begin{equation}
\label{eq:pcatlmtopcat}
\PCat^\LM(\cM,\cB)\to
\PCat(\cM)
\end{equation}
is an $\cO$-monoidal cocartesian fibration.
Therefore, it is classified by the lax $\cO$-monoidal
functor $\PCat(\cM)\to\Cat$
carrying $\cA\in\PCat(\cM)$ to $\Fun_\cM(\cA,\cB)$.

In particular, for any $\cO$-monoidal $\cM$-enriched
category $\cA$ the category $\Fun_\cM(\cA,\cB)$ is $\cO$-monoidal.
This is an enriched form of the Day convolution~\cite{L.HA}, 2.6.

Let us repeat that if $\cO$ is not monochrome, 
$\cM$ is actually a collection of monoidal categories,
$\PCat(\cM)$ is actually a collection of categories, etc.

\subsubsection{}
Let $X\in\Cat$ and let $\one_X$ denote the unit of the monoidal
category $\Quiv_X(\cM)$, see~\cite{H.EY}, 4.7.3. 
By definition, $\Fun_\cM(\one_X,\cB)=
\Fun(X,\cB)$. If now $\cM\in\Alg_{\cO\otimes\Ass}(\Cat^L)$, any 
$\cO$-monoidal category $X$ gives rise to an $\cO$-monoidal 
$\cM$-category $\one_X$. The enriched Day convolution defines an 
$\cO$-monoidal structure on $\Fun_\cM(\one_X,\cB)$. The internal Hom
in operads~\cite{L.HA}, 2.6 defines an $\cO$-monoidal structure on 
$\Fun(X,\cB)$. These two $\cO$-monoidal structures coincide, according to the following lemma.
\begin{lem}
The forgetful functor $\Fun_\cM(\one_X,\cB)\to\Fun(X,\cB)$ is an
equivalence of $\cO$-monoidal categories.
\end{lem}
\begin{proof}
The $\cO$-monoidal structure on $\Fun_\cM(\one_X,\cB)$ 
is induced from the identification
$$\Fun_\cM(\one_X,\cB)=\Alg_\LM(\Quiv^\LM_X(\cM,\cB))
\times_{\Alg_\Ass(\Quiv_X(\cM))}\{\one_X\}
$$
and an $\cO$-algebra structure on 
$$
\Quiv^\LM_X(\cM,\cB)=(\Quiv_X(\cM),\Fun(X,\cB).
$$
This immediately implies that the forgetful functor
$\Fun_\cM(\one_X,\cB)\to\Fun(X,\cB)$ is $\cO$-monoidal. Since it is
an equivalence, it is an $\cO$-monoidal equivalence.

\end{proof}

\begin{dfn} 
Given $(\cM,\cB)\in\Alg_{\cO\otimes\LM}(\Cat^L)$
and an $\cO$-monoidal $\cM$-enriched category $\cA$, 
a {\sl lax} $\cO$-monoidal $\cM$-functor $f:\cA\to\cB$ is an 
$\cO$-algebra in $\Fun_\cM(\cA,\cB)$.
\end{dfn}
We denote by $\Fun^{\cO,\lax}_\cM(\cA,\cB)=\Alg_\cO(\Fun_\cM(\cA,\cB))$
the category of lax $\cO$-monoidal $\cM$-functors from $\cA$ to $\cB$.

\subsubsection{$\cO$-monoidal $\cM$-functors}

Let $(\cM,\cB)\in\Alg_{\cO\otimes\LM}(\Cat^L)$
and let $\cA$ be an $\cO$-monoidal $\cM$-enriched category.

One has a canonical
$\cO$-monoidal functor $1:\one_X\to\cA$ for any $\cO$-monoidal enriched precategory with the $\cO$-monoidal category of objects $X$. It induces a  forgetful functor 
 $\Fun_\cM(\cA,\cB)\stackrel{1^*}{\to}\Fun_\cM(\one_X,\cB)=\Fun(X,\cB)$. It is automatically lax $\cO$-monoidal since it is right adjoint to an $\cO$-monoidal free module functor defined by
 $1:\one_X\to\cA$, see~\ref{crl:lmod-o-algebra}. $\cO$-algebras in 
 $\Fun(X,\cB)$ are lax $\cO$-monoidal functors from $X$ to $\cB$. 
\begin{Dfn} 
A   lax  $\cO$-monoidal $\cM$-functor $f:\cA\to\cB$ is called
$\cO$-monoidal if the lax $\cO$-monoidal functor $1^*(f):X\to\cB$ obtained from $f$ by forgetting the $\cA$-module structure, is $\cO$-monoidal.
\end{Dfn}
We denote by $\Fun^{\cO}_\cM(\cA,\cB)$
the category of $\cO$-monoidal $\cM$-functors from $\cA$ to $\cB$.

\subsection{$\cO$-monoidal Yoneda embedding}

\subsubsection{}
Assume $\cM$ is $\cO\otimes\Ass$-monoidal category in 
$\Cat^L$ and let $\cA$ be an 
$\cO$-monoidal $\cM$-enriched precategory, that is, an 
$\cO$-algebra in $\PCat(\cM)$.

We will now repeat the construction of Yoneda embedding
for $\cA$. 

Denote $\pi:\BM\to\Ass$ the canonical map of (planar) operads. For any planar operad $\cC$ (or a family of planar operads) the functor
$\pi^*:\Alg_\Ass(\cC)\to\Alg_\BM(\cC)$ carries an associative algebra $A$ to the $A$-$A$-bimodule $A$.

The folding functor $\phi:\Op_\BM\to\Op_\LM$ defined
in \cite{H.EY}, 3.6, preserves limits, so
the functor $\phi\circ\pi^*:\Alg_\Ass(\cC)\to
\Alg_\LM(\phi\circ\pi^*(\cC))$ also preserves limits,
and, therefore, carries $\cO$-algebras to $\cO$-algebras.

We apply this to $\cC:=\Quiv(\cM)$ and an $\cO$-algebra
$\cA$ in $\PCat(\cM)$.
We get an $\cO$-algebra in $\PCat^\LM(\cM\otimes\cM^\rev,
\cM)$.

According to \ref{sss:lm-version}, this defines a lax 
$\cO$-monoidal $\cM\otimes\cM^\rev$-functor 
$\tilde Y:\cA\boxtimes\cA^\op\to\cM$.

\subsubsection{}
\label{sss:mult}
One uses the adjoint associativity equivalence to deduce Yoneda embedding from the functor $\cA\boxtimes\cA^\op\to\cM$.

Recall~\cite{H.EY}, 6.1.7.
Below $\cB$ is a left $\cM\otimes\cM'$-module, $\cA\in\PCat(\cM)$ and $\cA'\in\PCat(\cM')$.
There is a canonical equivalence
\begin{equation}\label{eq:mult}
\Fun_{\cM\otimes\cM'}(\cA\boxtimes\cA',\cB)=
\Fun_\cM(\cA,\Fun_{\cM'}(\cA',\cB)).
\end{equation}

Let now $\cO$ be an operad, $\cM$ and $\cM'$ be in
$\Alg_{\cO\otimes\Ass}(\Cat^L)$ and $\cB$ be an $\cO$-monoidal category left-tensored over $\cM\otimes\cM'$.
Then the left and the right side of  (\ref{eq:mult}) 
are $\cO$-monoidal categories by~(\ref{sss:lm-version}).
\begin{Prp}
Under these assumptions (\ref{eq:mult}) 
is an an equivalence of $\cO$-monoidal categories.
\end{Prp}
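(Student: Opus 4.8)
The plan is to realize both sides of (\ref{eq:mult}) as fibers of $\cO$-monoidal cocartesian fibrations and to promote the underlying equivalence of \cite{H.EY}, 6.1.7 to an equivalence of these fibrations, compatibly with the projection to $\cO$. Recall from \ref{sss:lm-version} that the $\cO$-monoidal structure on the left-hand side is obtained by applying \ref{prp:forgetfulcocartesian} to the pair $(\cM\otimes\cM',\cB)$: the forgetful functor $\PCat^\LM(\cM\otimes\cM',\cB)\to\PCat(\cM\otimes\cM')$ is an $\cO$-monoidal cocartesian fibration, classified by a lax $\cO$-monoidal functor whose value at the $\cO$-algebra $\cA\boxtimes\cA'$ is $\Fun_{\cM\otimes\cM'}(\cA\boxtimes\cA',\cB)$. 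Thus it suffices to identify, as $\cO$-monoidal categories, this value with the iterated construction on the right-hand side in a way that recovers (\ref{eq:mult}) on underlying categories; since a natural equivalence of lax $\cO$-monoidal classifying functors induces, on every $\cO$-algebra object, an equivalence of the associated $\cO$-monoidal categories, the whole problem is to produce the identification at the level of the classifying data.

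First I would make sense of the nested $\cO$-monoidal structure on the right-hand side. For a fixed $\cO$-monoidal $\cM'$-enriched category $\cA'$, I claim the assignment $\cB\mapsto\Fun_{\cM'}(\cA',\cB)$ refines to a lax $\cO$-monoidal functor from $\cO$-monoidal left $\cM\otimes\cM'$-modules to $\cO$-monoidal left $\cM$-modules. This follows by applying \ref{prp:forgetfulcocartesian} and \ref{crl:lmod-o-algebra} to the residual $\cM$-action: the category $\Fun_{\cM'}(\cA',\cB)=\LMod_{\cA'}(\Fun(X',\cB))$ retains a left $\cM$-action because the $\cM$- and $\cM'$-actions on $\cB$ commute, and this action is compatible with the $\cO$-monoidal structure produced in \ref{sss:lm-version}. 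Applying \ref{sss:lm-version} once more to $\cA$ and the $\cO$-monoidal left $\cM$-module $\Fun_{\cM'}(\cA',\cB)$ yields the $\cO$-monoidal structure on $\Fun_\cM(\cA,\Fun_{\cM'}(\cA',\cB))$ used in the statement.

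The core step, and the main obstacle, is to check that the equivalence of \cite{H.EY}, 6.1.7 is not merely an equivalence of underlying categories but is induced by a symmetric-monoidally natural identification of the underlying $\LM$-operad data. Concretely, the external product $\boxtimes$ on enriched precategories and the iterated module interpretations $\Fun_{\cM\otimes\cM'}(\cA\boxtimes\cA',\cB)=\LMod_{\cA\boxtimes\cA'}(\Fun(X\times X',\cB))$ and $\Fun_\cM(\cA,\Fun_{\cM'}(\cA',\cB))=\LMod_\cA(\Fun(X,\LMod_{\cA'}(\Fun(X',\cB))))$ are all assembled from the lax symmetric monoidal functors $\Alg_\LM\circ\quiv^\LM$ of \ref{sss:Omec} together with the symmetric monoidal external product; one must verify that the Fubini-type equivalence relating these two iterated $\LMod$'s is compatible with all of this structure. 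Granting that compatibility, (\ref{eq:mult}) becomes an equivalence of lax symmetric monoidal functors in the variables $(\cM,\cM',\cA,\cA',\cB)$, and evaluating at the $\cO$-algebra structures carried by these objects produces the desired equivalence of $\cO$-monoidal categories. I expect the bookkeeping of this last compatibility — tracking $\boxtimes$ through the quiver construction and commuting it past the two nested $\LMod$'s coherently over $\cO$ — to be the only substantial difficulty; everything else is a direct application of \ref{prp:forgetfulcocartesian} and \ref{crl:lmod-o-algebra}.
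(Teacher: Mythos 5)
Your high-level frame --- realize both sides as values of lax $\cO$-monoidal classifying functors and promote the underlying equivalence of \cite{H.EY}, 6.1.7 to an equivalence of that classifying data --- is consistent with what the paper does, but your argument has a genuine gap at exactly the point you yourself single out as the core step: you write ``Granting that compatibility\dots'' and never establish it. The compatibility of the Fubini-type identification of the two iterated $\LMod$'s with the symmetric monoidal structure \emph{is} the content of the proposition, so a proof that grants it proves nothing; and the ``bookkeeping'' you defer (tracking $\boxtimes$ through the quiver construction and commuting it past two nested $\LMod$'s coherently over $\cO$) is not a finite verification --- it is an infinite hierarchy of coherences that needs an organizing device you have not supplied.

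The paper supplies that device, and it is the idea your proposal is missing. The equivalence (\ref{eq:mult}) is constructed in \cite{H.EY}, 6.1.7, not ad hoc but from a single equivalence of $\BM$-operads
\[
\Funop_\BM(\BM_X\times\BM^\rev_{X'},\cT)\simeq
\Quiv_X^\BM(\Quiv_{X'}^\BM(\cT^\rev)^\rev),
\]
natural in the $\BM$-monoidal category $\cT=(\cM,\cB,\cM^{\prime\rev})$ encoding the left $\cM\otimes\cM'$-module $\cB$, and \emph{both sides preserve limits as functors of $\cT$}. A limit-preserving functor between categories with finite products is automatically lax symmetric monoidal for the cartesian structures, hence carries $\cO$-algebra objects to $\cO$-algebra objects; since an $\cO$-monoidal structure on $\cT$ is exactly an $\cO$-algebra structure for the cartesian structure, the equivalence lifts to $\cO\otimes\BM$-monoidal categories with no coherence checking at all. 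This is the same mechanism already used in \ref{sss:Omec} to make $\PCat(\cM)$ lax symmetric monoidal. If you replace your ``granting that compatibility'' step with this limit-preservation observation, the rest of your outline (evaluating the resulting lax symmetric monoidal naturality at the given $\cO$-algebra structures) goes through as you describe.
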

\begin{proof}
Let $\cT=(\cM,\cB,\cM^{\prime\rev})$ be the $\BM$-monoidal
category defined by the left $\cM\otimes\cM'$-module $\cB$.
The equivalence~(\ref{eq:mult}) is constructed 
in~\cite{H.EY}, 6.1.7, from the equivalence
\begin{equation}
\label{eq:quiv=quivquiv}
\Funop_\BM(\BM_X\times\BM^\rev_{X'},\cT)=
\Quiv_X^\BM(\Quiv_{X'}^\BM(\cT^\rev)^\rev).
\end{equation}
Both left and right hand sides, cosidered as functors of $\cT$, preserve limits. Therefore, in case $\cT$ is $\cO$-monoidal, (\ref{eq:quiv=quivquiv}) is an equivalence of 
$\cO\otimes\BM$-monoidal categories.
\end{proof}
%\label{sss:O-yoneda}
Proposition~\ref{sss:mult} and the fact that the functor 
$\tilde Y:\cA\boxtimes\cA^\op\to\cM$ is lax $\cO$-monoidal, 
immediately imply that the Yoneda embedding 
$Y:\cA\to P_\cM(\cA)$ is lax $\cO$-monoidal.

\begin{prp}
\label{prp:YOmonoidal}
$Y:\cA\to P_\cM(\cA)$ is  $\cO$-monoidal.
\end{prp}
\begin{proof}
We have to verify that the lax $\cO$-monoidal functor 
$Y:X\to P_\cM(\cA)=\LMod_{\cA^\op}(\Fun(X^\op,\cM))$ is 
$\cO$-monoidal.  

The unit $1:\one_X\to\cA$ gives rise to a decomposition
of $Y$ into
$$
X\stackrel{Y_\one}{\longrightarrow}P_\cM(\one_X)=\Fun(X^\op,\cM)
\stackrel{F}{\longrightarrow}P_\cM(\cA),
$$
where $F$ is the free $\cA^\op$-module functor, see~\cite{H.EY}, 6.2.6. The free module functor $F$ is $\cO$-monoidal by
\ref{crl:lmod-o-algebra}, whereas $Y_\one$ is the composition
of the usual $\infty$-categorical Yoneda embedding $Y_X:X\to P(X)$
and the functor $P(X)\to P_\cM(\one_X)=P(X)\otimes\cM$, so it is
also $\cO$-monoidal.

\end{proof}

\subsection{Universality of Yoneda embedding in the monoidal setting}

Recall that the composition with the Yoneda embedding
defines an equivalence~(\ref{eq:y*})
$$
Y^*:\Fun_\cM^L(P_\cM(\cA),\cB)\to
\Fun_\cM(\cA,\cB)
$$
for any left $\cM$-tensored category with colimits $\cB$.
In this subsection we present two $\cO$-monoidal versions of
this equivalence.

\subsubsection{}
Let $\cA$ be an $\cO$-monoidal $\cM$-enriched category.

In this case $P_\cM(\cA)$ is also $\cO$-monoidal (as a left
$\cM$-module) and the Yoneda embedding is an $\cO$-monoidal 
$\cM$-functor, see~\ref{prp:YOmonoidal}. 
In Theorem~\ref{thm:O-equivalence} below we show that the equivalence (\ref{eq:y*})
induces the equivalences
\begin{equation}
\label{eq:eq-O-lax}
Y^{\cO,\lax*}:\Fun^{\cO,\lax}_{\LMod_\cM}(P_\cM(\cA),\cB)\to
\Fun^{\cO,\lax}_\cM(\cA,\cB)
\end{equation}
and
\begin{equation}
\label{eq:eq-O}
Y^{\cO*}:\Fun^{\cO}_{\LMod_\cM}(P_\cM(\cA),\cB)\to
\Fun^{\cO}_\cM(\cA,\cB).
\end{equation}

\subsubsection{}
The assignment
$$\cB\mapsto\Fun_\cM(\cA,\cB),$$
as defined by the formula (\ref{eq:funM}),
defines a functor $\Alg_\cO(\LMod_\cM)\to\Alg_\cO(\Cat)$
which yields a canonical functor
\begin{equation}
\Fun^{\cO,\lax}_\cM(\cA,P_\cM(\cA))\times
\Fun^{\cO,\lax}_{\LMod_\cM}(P_\cM(\cA),\cB)\to
\Fun^{\cO,\lax}_\cM(\cA,\cB).
\end{equation}
Evaluating it at the $\cO$-monoidal Yoneda embedding $Y:\cA\to P_\cM(\cA)$, we get a map~(\ref{eq:eq-O-lax}).

The map~(\ref{eq:eq-O}) is its restriction.

\begin{thm}
\label{thm:O-equivalence}
Let $\cO$ be an operad, $\cM\in\Alg_{\cO\otimes\Ass}(\Cat^L)$ be an 
$\cO\otimes\Ass$-monoidal category with colimits.
Let, furthermore, $\cA$ be an $\cO$-monoidal $\cM$-enriched
category and let
$\cB\in\Alg_\cO(\LMod_\cM)$ be an $\cO$-monoidal 
category  with colimits left-tensored over $\cM$. Then
the functors~(\ref{eq:eq-O-lax}) and~(\ref{eq:eq-O}) are equivalences. 

\end{thm}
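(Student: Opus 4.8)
The plan is to deduce the $\cO$-monoidal equivalence (\ref{eq:eq-O}) from its non-monoidal counterpart, Theorem~\ref{thm:universal}, by passing to $\cO$-algebras. The key observation is that $Y^{\cO*}$ is precomposition with the $\cO$-monoidal Yoneda embedding $Y:\cA\to P_\cM(\cA)$ of \ref{sss:O-yoneda}, and precomposition with an $\cO$-monoidal functor is lax $\cO$-monoidal for the Day-convolution structures; so $Y^{\cO*}$ ought to be exactly $\Alg_\cO$ applied to the equivalence $Y^*$ of Theorem~\ref{thm:universal}.

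First I would record the two identifications of the source and target of $Y^{\cO*}$ as categories of $\cO$-algebras. The target is $\Fun^\cO_\cM(\cA,\cB)=\Alg_\cO(\Fun_\cM(\cA,\cB))$ by definition, the Day-convolution $\cO$-monoidal structure on $\Fun_\cM(\cA,\cB)$ being the one produced in \ref{sss:lm-version} from the $\cO$-monoidal cocartesian fibration $\PCat^\LM(\cM,\cB)\to\PCat(\cM)$ of Proposition~\ref{prp:forgetfulcocartesian}. For the source, the internal Hom $\Fun^\cK_\cM(P_\cM(\cA),\cB)$ of \ref{ss:funak} carries an analogous $\cO$-monoidal structure once $P_\cM(\cA)$ and $\cB$ are viewed as $\cO$-monoidal left $\cM$-modules, and I would check that $\Alg_\cO(\Fun^\cK_\cM(P_\cM(\cA),\cB))=\Fun^{\cO,\cK}_\cM(P_\cM(\cA),\cB)$. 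This is an instance of the principle $\Alg_\cP(\Alg^\mu_{\cQ/\cR}(\cX))=\Alg_\cR(\cX)$ recalled at the start of this section, applied with $\cR=\cO\otimes\LM$ presented as the tensor product of $\cO$ and $\LM$: taking $\cO$-algebras in the $\LM$-level description (\ref{sss:funlmodw}) of $\Fun^\cK_\cM$ reproduces precisely the fiber over $\id_\cM$ of $\Alg_{(\cM,P_\cM(\cA))/\cO\otimes\LM}(\cM,\cB)\to\Alg_{\cM/\cO\otimes\Ass}(\cM)$, and the two side conditions ($\cO\otimes\LM$-monoidality and preservation of $\cK$-indexed colimits) match the two conditions in the definition of $\Fun^{\cO,\cK}_\cM$.

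With both structures in place the argument is short. Since $Y$ is $\cO$-monoidal, precomposition $Y^*:\Fun^\cK_\cM(P_\cM(\cA),\cB)\to\Fun_\cM(\cA,\cB)$ is lax $\cO$-monoidal for the two Day-convolution structures, and by Theorem~\ref{thm:universal} it is an equivalence of underlying categories; a lax $\cO$-monoidal functor that is an equivalence is automatically strongly $\cO$-monoidal, so $Y^*$ is an equivalence of $\cO$-monoidal categories. Applying $\Alg_\cO$ and invoking the two identifications above shows that $\Alg_\cO(Y^*)$ is an equivalence, and by construction $\Alg_\cO(Y^*)=Y^{\cO*}$, since both functors are given by composition with $Y$. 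This proves the theorem.

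The main obstacle is the second step: equipping the source $\Fun^\cK_\cM(P_\cM(\cA),\cB)$ with the correct $\cO$-monoidal structure and verifying that $Y^*$ is lax $\cO$-monoidal coherently within the paper's fibrational framework, not merely on underlying categories. Concretely, one must exhibit $\Fun^\cK_\cM(P_\cM(\cA),\cB)$, $\Fun_\cM(\cA,\cB)$ and the comparison $Y^*$ as the fibers over $\id_\cM$ of a morphism of $\cO$-monoidal cocartesian fibrations, so that the operations $\Alg_\cO$ and ``fiber over $\id_\cM$'' commute with passing to $Y^*$; here the $\cO$-monoidality of $Y$ from \ref{sss:O-yoneda} together with Proposition~\ref{prp:forgetfulcocartesian} do the work. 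The remaining points---that the Day-convolution structure on $\Fun^\cK_\cM(P_\cM(\cA),\cB)$ agrees with the internal-Hom structure of \ref{ss:funak}, and that forming $\cO$-algebras is compatible with the fiber description---are formal but must be carried out using the operadic tensor-product bookkeeping of \ref{ss:reduction} and Section~\ref{sec:wc}.
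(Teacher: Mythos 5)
There is a genuine gap, and it sits exactly where you file things under ``formal bookkeeping''. The target of (\ref{eq:eq-O}) is indeed $\Alg_\cO(\Fun_\cM(\cA,\cB))$ by definition, and $\cO$-algebras for a Day-convolution-type structure are \emph{lax} $\cO$-monoidal objects; no strongness is imposed on $f:\cA\to\cB$. The source, by contrast, is defined as the full subcategory of lax $\cO\otimes\LM$-monoidal functors $(\cM,P_\cM(\cA))\to(\cM,\cB)$ spanned by those that are \emph{strongly} $\cO\otimes\LM$-monoidal and colimit-preserving. Applying $\Alg_\cO$ to any monoidal structure on $\Fun^\cK_\cM(P_\cM(\cA),\cB)$ can only produce functors that are lax in the $\cO$-direction, so your identification $\Alg_\cO(\Fun^\cK_\cM(P_\cM(\cA),\cB))=\Fun^{\cO,\cK}_\cM(P_\cM(\cA),\cB)$ is not an instance of $\Alg_\cP(\Alg^\mu_{\cQ/\cR}(\cX))=\Alg_\cR(\cX)$: it already contains the real content of the theorem, namely that a colimit-preserving $\cM$-functor $P_\cM(\cA)\to\cB$ carrying a lax $\cO$-monoidal structure extending the given one on $\cA$ is automatically \emph{strong} $\cO$-monoidal. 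Nothing in your argument establishes this, and the auxiliary claim you lean on --- that a lax $\cO$-monoidal functor which is an equivalence of underlying categories is automatically strong --- is both unjustified in general and aimed at the wrong object: it concerns $Y^*$ between the two functor categories, not the individual extensions $P_\cM(\cA)\to\cB$ whose strongness is at issue. Even granting all your identifications except this one, you would obtain an equivalence between \emph{lax} $\cO$-monoidal colimit-preserving functors out of $P_\cM(\cA)$ and lax $\cO$-monoidal functors out of $\cA$, which is not the statement being proved.

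The paper avoids this by upgrading the \emph{inverse} instead of the restriction map. Proposition~\ref{prp:Q-laxsm} (whose proof occupies most of the section) makes the family $\cQ_{X,\cM,\cB}$ a $\TEN_\succ$-algebra in $\Alg_\cO(\Cat^\cK)$, so that the weighted colimit of an $\cO$-algebra $f$ in $\Fun_\cM(\cA,\cB)$ is \emph{constructed} as a strong $\cO$-monoidal, colimit-preserving functor $\colim(f):P_\cM(\cA)\to\cB$; this is where the automatic strongness is actually produced, using Proposition~\ref{prp:S-family}(3) to see that the forgetful functor from $\cO$-algebras commutes with the relative tensor product. The two composites $Y^{\cO*}\circ\colim$ and $\colim\circ Y^{\cO*}$ are then identified with the identities by applying the conservative forgetful functor $\Alg_\cO(\Cat^\cK)\to(\Cat^\cK)^{[\cO]}$ and invoking Theorem~\ref{thm:universal} one color at a time. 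If you want to rescue your route, you must prove the automatic-strongness statement directly, and the only available proof is to exhibit the lax extension as the weighted colimit of its restriction --- i.e., to redo the paper's construction.
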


\begin{proof}
By Proposition~\ref{prp:Q-SM} $\cQ_{X,\cM,\cB}$ is an
$\cO$-algebra object in $\Mon_{\TEN_\succ}^{\colax,L}$.
By Lemma~\ref{lem:duality} it gives rise to a $\TEN_\succ$-algebra object 
$Q_{X,\cM,\cB}$ in $\Mon^{\lax,L}_\cO$. This yields,
for an $\cO$-monoidal $\cM$-enriched category $\cA$, 
similarly to~(\ref{eq:wc}), a lax $\cO$-monoidal functor
\begin{equation*}
\colim: P_\cM(\cA)\times\Fun_\cM(\cA,\cB)\to\cB.
\end{equation*}
By \ref{prp:YOmonoidal}, this induces a functor
\begin{equation}
\label{eq:Ocolim}
\colim:\Fun_\cM^{\cO,\lax}(\cA,\cB)\to
\Fun_{\LMod_\cM}^{\cO,\lax}(P_\cM(\cA),\cB).
\end{equation}

Let $[\cO]=\cO_1^\eq$ be the space of colors of the operad $\cO$. We have a forgetful functor
$G:\Alg_\cO(\Cat^\cK)\to(\Cat^\cK)^{[\cO]}$
that commutes with sifted colimits, see~\cite{L.HA}, 3.2.3.1.

By Proposition~\ref{prp:S-family}(3),  the
forgetful functor $G$ commutes with the weighted colimits.

Choose $o\in[\cO]$ and look at the following diagram 
\begin{equation}
\xymatrix{
&{\Fun_{\LMod_\cM}^{\cO,\lax}(P_\cM(\cA),\cB)}\ar@<1ex>[r]^{\quad Y^{\cO*}}
\ar[d]^{G_a}
&{\Fun^{\cO,\lax}_\cM(\cA,\cB)}\ar@<1ex>[l]^{\quad\colim}\ar[d]^{G_a} \\
&{\Fun_{\LMod_\cM}(P_{\cM_o}(\cA_o),\cB_o)}
\ar@<1ex>[r]^{\quad Y^*_o}
&{\Fun_{\cM_o}(\cA_o,\cB_o)}\ar@<1ex>[l]^{\quad\colim},
}
\end{equation}
with $Y^*_o$ induced by the Yoneda embedding for $\cA_o$ and $G_a$
denoting the $a$-component of the functor forgetting the $\cO$-algebra structure. Both squares of the diagram are homotopy commutative.

Since for all $o\in[\cO]$ the arrows $Y^*_o$ and $\colim$ are homotopy inverse, and since the forgetful functor $G$ is conservative, $Y^{\cO*}$ and $\colim$ are also homotopy inverse.

This proves that (\ref{eq:eq-O-lax}) is an equivalence. 
Let us prove that the map (\ref{eq:eq-O}) is also an equivalence. This amounts to verifying that a lax $\cO$-monoidal functor
$$
\Phi:P_\cM(\cA)\to\cB
$$
is $\cO$-monoidal whenever its composition $\phi$ with the embedding
$Y:X\subset P_\cM(\cA)$ is $\cO$-monoidal. The embedding $Y$ is
a composition
$$
X\stackrel{h}{\to}P(X)\stackrel{i}{\to}\Fun(X^\op,\cM)
\stackrel{F}{\to}P_\cM(\cA)
$$
where $h$ is the (non-enriched) Yoneda embedding, $i$ is induced
by the unit $\cS\to\cM$ and $F$ is the free $\cA$-module functor,
see~\cite{H.EY}, 6.2.6. If $\phi:X\to\cB$ is $\cO$-monoidal,
the composition $\Phi\circ F\circ i:P(X)\to\cB$ is $\cO$-monoidal
by Lemma~\ref{lem:Omon-1} below. Then the composition 
$\Phi\circ F:\Fun(X^\op,\cM)\to\cB$ is $\cO$-monoidal as 
$\Phi\circ F$ can be reconstructed from $\Phi\circ F\circ i$ as
the composition
$$
\Fun(X^\op,\cM)=\cM\otimes P(X)\stackrel{\id_M\otimes(\Phi\circ F\circ i)}{\longrightarrow}\cM\otimes\cB\to\cB.
$$
Finally, since any $\Phi\in P_\cM(\cA)$ is a colimit
of free $\cA$-modules, Lemma~\ref{lem:Omon-1} implies that $\Phi$ is
$\cO$-monoidal.
\end{proof}

\begin{lem}
\label{lem:Omon-1}
Let $\Phi:\cC\to\cD$ be an arrow in $\Mon_\cO^{\lax,L}$, 
$i:\cC_0\to\cC$ in $\Mon_\cO$ so that $\Phi\circ i:\cC_0\to\cD$ is
also in $\Mon_\cO$. Assume that $\cC_0$ generates $\cC$ by 
colimits. Then $\Phi$ is  in $\Mon^L_\cO$.
\end{lem}
\begin{proof}
Let $\alpha:x\to y$ be an arrow in $\cO$. We have to verify that $\Phi$
preserves cocartesian liftings of $\alpha$. Let $c\in\cC$ be over $x$.
We have $c=\colim\{i:I\to(\cC_0)_x\to\cC_x\}$. Since 
$\alpha_!:\cC_x\to\cC_y$ preserves colimits, 
$\alpha_!(c)=\colim\{\alpha_!\circ i\}$. Since $\Phi\circ i$ is in $\Mon_\cO$, the arrows $\Phi(c_i)\to\Phi(\alpha_!(c_i))$ are cocartesian liftings of $\alpha$. Since the functor $\alpha_!:\cD_x\to\cD_y$
preserves colimits, the arrow $\colim(\Phi(c_i))\to\colim(\Phi(\alpha_!(c_i)))$ is a cocartesian lifting of $\alpha$.
\end{proof}

\end{document}